\theoremstyle{plain}
\newtheorem*{thm*}{Theorem}
\newtheorem{thm}{Theorem}[section]
\Crefname{thm}{Theorem}{Theorems}
\newtheorem*{lem*}{Lemma}
\newtheorem{lem}[thm]{Lemma}
\Crefname{lem}{Lemma}{Lemmas}
\newtheorem*{claim*}{Claim}
\crefname{claim}{Claim}{Claims}
\Crefname{claim}{Claim}{Claims}
\newtheorem{prop}[thm]{Proposition}
\Crefname{prop}{Proposition}{Propositions}
\newtheorem{cor}[thm]{Corollary}
\crefname{cor}{Corollary}{Corollaries}
\newtheorem{conj}[thm]{Conjecture}
\crefname{conj}{Conjecture}{Conjectures}
\newtheorem{qn}[thm]{Question}
\Crefname{qn}{Question}{Questions}
\newtheorem{obs}[thm]{Observation}
\Crefname{obs}{Observation}{Observations}
\Crefname{ex}{Example}{Examples}
\theoremstyle{definition}
\Crefname{prob}{Problem}{Problems}
\Crefname{defn}{Definition}{Definitions}
\newtheorem*{defn*}{Definition}
\theoremstyle{remark}
\renewenvironment{proof}[1][]{\begin{trivlist}
\item[\hspace{\labelsep}{\bf\noindent Proof#1.\/}] }{\qed\end{trivlist}}
\newcommand{\ceil}[1]{
    \left\lceil #1 \right\rceil
}
\newcommand{\floor}[1]{
    \left\lfloor #1 \right\rfloor
}
\newcommand{\eps}{\varepsilon}
\newcommand{\pr}{\mathbb{P}}
\def\expandafter\normalsize\expandafter{%
    \normalsize
    \setlength\abovedisplayskip{8pt}
    \setlength\belowdisplayskip{8pt}
    \setlength\abovedisplayshortskip{4pt}
    \setlength\belowdisplayshortskip{4pt}
}
 \setlist[itemize]{leftmargin=*}
\newcommand{\G}{\mathcal{G}}
\newcommand{\E}{\mathbb{E}}
\DeclareFontFamily{OT1}{pzc}{}
\DeclareFontShape{OT1}{pzc}{m}{it}{<-> s * [1.10] pzcmi7t}{}
\DeclareMathAlphabet{\mathpzc}{OT1}{pzc}{m}{it}
\title{The power of many colours}
\author{Noga Alon\thanks{Department of Mathematics, Princeton
University, Princeton, USA. Research supported in part by NSF grant
DMS-2154082 and by USA-Israel BSF grant 2018267. Email: \href{mailto:nalon@math.princeton.edu} {\nolinkurl{nalon@math.princeton.edu}}.} \and
Matija Buci\'c\thanks{School of Mathematics, Institute for Advanced Study and Department of Mathematics, Princeton University, Princeton, USA. Research supported in part by NSF grant CCF-1900460 and ERC Grant 101044123 (RandomHypGra). Email: \href{mailto:matija.bucic@ias.edu} {\nolinkurl{matija.bucic@ias.edu}}.} \and 
Micha Christoph\thanks{Department of Computer Science, ETH Z\"urich, Z\"urich, Switzerland. Email: \href{micha.christoph@inf.ethz.ch} {\nolinkurl{micha.christoph@inf.ethz.ch}}.}
\and
Michael Krivelevich\thanks{School of Mathematical Sciences, Tel Aviv University, Tel Aviv, Israel. Research supported in part by USA-Israel BSF grant 2018267. Email:
\href{krivelev@tauex.tau.ac.il} {\nolinkurl{krivelev@tauex.tau.ac.il}}.}
}
 \date{}
\begin{document}
\maketitle

\begin{abstract}

   A classical problem, due to Gerencs\'er and Gy\'arf\'as from 1967, asks how large a monochromatic connected component can we guarantee in any $r$-edge colouring of $K_n$? We consider how big a connected component can we guarantee in any $r$-edge colouring of $K_n$ if we allow ourselves to use up to $s$ colours. This is actually an instance of a more general question of Bollob\'as from about 20 years ago which asks for a $k$-connected subgraph in the same setting. We complete the picture in terms of the approximate behaviour of the answer by determining it up to a logarithmic term, provided $n$ is large enough. We obtain more precise results for certain regimes which solve a problem of Liu, Morris and Prince from 2007, as well as disprove a conjecture they pose in a strong form. 
   

   We also consider a generalisation in a similar direction of a question first considered by Erd\H{o}s and R\'enyi in 1956, who considered given $n$ and $m$, what is the smallest number of $m$-cliques which can cover all edges of $K_n$? This problem is essentially equivalent to the question of what is the minimum number of vertices that are certain to be incident to at least one edge of some colour in any $r$-edge colouring of $K_n$. We consider what happens if we allow ourselves to use up to $s$ colours. We obtain a more complete understanding of the answer to this question for large $n$, in particular determining it up to a constant factor for all $1\le s \le r$, as well as obtaining much more precise results for various ranges including the correct asymptotics for essentially the whole range.
\end{abstract}

\section{Introduction}
What can one say about the monochromatic structures we are guaranteed to find in any $r$-edge colouring of a large graph $G$? This very general extremal question captures a large number of classical extensively studied topics including a large portion of modern Ramsey theory.

There are limitations to how much structure one can ensure by using only a single colour. For example, a very classical result of Gy\'arf\'as \cite{gyarfas1977partition} and F\"uredi \cite{furedi}, essentially answering a question of Gerencs\'er and Gy\'arf\'as from 1967 \cite{gerencser-gyarfas}, tells us that in an $r$-edge colouring of an $n$-vertex complete graph, one can always find a monochromatic tree, or equivalently a connected component in a single colour, of order at least $\frac{n}{r-1},$ and one cannot hope to do better whenever an affine plane of order $r-1$ exists (which is known to exist whenever $r-1$ is a prime power). This question and its generalisations in many different directions have been extensively studied, see, e.g., surveys \cite{gyarfas-survey,other-survey}. Let us, for example, highlight the question of how many monochromatic components one needs to cover every vertex of an $r$-edge coloured $K_n,$ since due to a beautiful duality reduction of F\"uredi \cite{furedi} this question is equivalent to the famous Ryser conjecture concerning cover and matching numbers in $r$-partite hypergraphs \cite{ryser-original}.

A very natural question that arises is how much more power do we get if we allow ourselves to use more than just a single colour? This type of question is very classical, for example, Erd\H{o}s, Hajnal and Rado \cite{EHR} already in 1965 raised a generalisation of the Ramsey question in this direction. Namely, how large a clique using edges of at most $s$ different colours can one guarantee in an $n$-vertex $r$-edge coloured complete graph? This problem and its generalisations have been extensively studied ever since, see e.g.\ \cite{AEGM,path-mc-ramsey,stein-mc-ramsey,conlon2022set,gyarfas-mc-ramsey,chung-mc-ramsey}. 

In this paper, we will explore this ``power of many colours'' paradigm for several classical extremal problems. The first one has as its starting point a folklore observation, usually attributed to Erd\H{o}s and Rado (see e.g.\ \cite{gyarfas-survey}), which states that in any $2$-edge colouring of $K_n$ the edges in one of the colours connect all $n$ vertices. A natural generalisation asks how big a connected monochromatic subgraph can we guarantee if we are using more than two colours. This is precisely the subject of the question of Gerencs\'er and Gy\'arf\'as \cite{gerencser-gyarfas} from 1967 discussed above. While we will mostly focus on what happens with this question if we are allowed to use up to $s$ colours, let us first mention yet another natural generalisation due to Bollob\'as and Gy{\'a}rf{\'a}s \cite{bollobas-high-connected} from 2003, which also leads to the same question from a different angle. They consider how large a $k$-connected monochromatic subgraph can one guarantee in any $r$-edge colouring of $K_n$. See \cite{luczak-high-connected,LMP-monochromatic, lo2020monochromatic}, and references therein, for details on what is known about this problem. In 2003 Bollob\'as (see \cite{LMP-multicolour}) asked the overarching ``power of many colours'' generalisation, namely what happens if one is allowed to use up to $s$ colours when looking for a large highly connected subgraph. The benchmarks for this problem have been established by Liu, Morris and Prince \cite{LMP-multicolour}, for some improvements and extensions see \cite{liu-thesis,liu-person,forbidden-multicolour}.

As already mentioned we will mostly focus on the $k=1$ instance of the question of Bollob\'as. The reasons for this are many-fold. We find it to be perhaps the most natural instance since it is a direct ``power of many colours'' generalisation of the much more classical question of Gerencs\'er and Gy\'arf\'as. Also, when assuming the underlying complete graph is at least moderately large, it seems most of the difficulty of the general problem is already captured by this instance. 

We will denote by {\boldmath $f(n,r,s)$} the answer to the above question, so it is the maximum number of vertices that we can guarantee to connect using edges in up to  $s$ colours in any $r$-edge colouring of $K_n$. 
For example, $f(n,2,1)=n$ is a restatement of the observation of Erd\H{o}s and Rado, and determining $f(n,r,1)$ precisely recovers the monochromatic component problem of Gerencs\'er and Gy{\'a}rf{\'a}s discussed above.

Gy\'arf\'as \cite{gyarfas1977partition} and independently F\"uredi \cite{furedi} showed $f(n,r,1)\ge \frac{n}{r-1}$, that this is tight given certain divisibility conditions on $n$ and $r$ and is always close to tight. See \cite{furedi-survey,gyarfas-survey}, for more details on the plethora of results surrounding this classical problem.
Turning to what is known about $f(n,r,s)$ for larger $s$, one can observe that the usual examples based on affine planes perform very poorly even for $s=2$ since using any two colours one can connect almost the whole graph. 
Here, Liu, Morris and Prince \cite{LMP-multicolour} 
showed $f(n,r,2)\ge\frac{4n}{r+1}$, that this is tight given certain divisibility conditions on $n$ and $r$ and is always close to tight (in some well-defined quantitative sense). See also a paper of Liu and Person \cite{liu-person} obtaining more precise results for the $s=2$ case of the more general $k$-connectivity version of the question. For larger $s$ the best-known lower bound, also due to Liu, Morris and Prince \cite{LMP-multicolour}, was $f(n,r,s) \ge (1-e^{s^2/(3r)})n$, which reduces to $f(n,r,s) \ge \Omega(\frac{s^2}{r})\cdot n$ when $s \le \sqrt{r}$. In terms of upper bounds for very small values of $s$ the best bound they obtain is $f(n,r,s) \le \frac{2^{s}}{r+1} \cdot n,$ given some divisibility conditions, which also implies a slightly weaker bound holds in general. 

Our first result shows this upper bound is essentially tight, closing the gap between the quadratic in $s$ lower bound and exponential in $s$ upper bound for any fixed $s$, and $r$ large enough, determining the answer up to an absolute constant factor.
\begin{thm}\label{thm:f-s-small-intro} 
    Let $s \ge 1,$ $r$ large enough and $n \ge \Omega(r)$ then $f(n,r,s)= \frac{\Theta(2^{s})}{r}\cdot n.$ 
\end{thm}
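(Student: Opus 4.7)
The plan is to establish the matching lower bound $f(n,r,s) \ge \Omega(2^s/r)\cdot n$, since the upper bound $f(n,r,s) \le O(2^s/r)\cdot n$ follows from the constructions of Liu, Morris and Prince together with standard reductions to the divisibility-free setting. The argument would proceed by induction on $s$, aiming for a \emph{doubling} of the guaranteed component size with each additional colour.

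The base case $s=1$ is the Gy\'arf\'as--F\"uredi bound $f(n,r,1) \ge n/(r-1)$. For the inductive step, given $f(n,r,s-1) \ge c\cdot 2^{s-1}/r\cdot n$ for a universal $c > 0$, the goal is $f(n,r,s) \ge c\cdot 2^s/r\cdot n$ (adjusting $c$ if necessary). The heart of the step is to find two vertex-disjoint sets $A_1, A_2\subseteq V(K_n)$, each of size at least $c\cdot 2^{s-1}/r\cdot n$, such that both are connected using the \emph{same} set $C$ of $s-1$ colours. Granted such a pair, the $|A_1|\cdot|A_2|$ edges between them all have colours in $[r]\setminus C$ (since $A_1$ and $A_2$ lie in distinct components of the graph on edges of colours in $C$), so pigeonhole over the $r-s+1$ remaining colours yields some $c^*\notin C$ providing an edge between $A_1$ and $A_2$. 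Adding $c^*$ to the palette then merges $A_1\cup A_2$ into a single connected component using exactly $s$ colours on at least $c\cdot 2^s/r\cdot n$ vertices.

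To produce a common-palette pair, I would iterate the inductive hypothesis: starting with $V_0 = V$, at step $j$ extract an $(s-1)$-colour connected set $B_j\subseteq V_j$ with colour set $C_j$, and set $V_{j+1} = V_j\setminus B_j$. As long as $|V_j| = \Omega(n)$, the sizes $|B_j|$ stay $\Omega(2^{s-1}/r\cdot n)$, yielding many disjoint large sets indexed by their colour sets $C_j\in\binom{[r]}{s-1}$; I would then look for indices $j, j'$ with $C_j = C_{j'}$ to serve as $A_1, A_2$.

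The main obstacle is that the number of iterations, roughly $r/2^s$, is typically much smaller than $\binom{r}{s-1}$, so naive pigeonhole over colour sets does not suffice for $s$ larger than a small constant. To circumvent this, I would strengthen the inductive hypothesis to guarantee, instead of merely one $(s-1)$-colour connected set of the required size, the existence of a colour set $C$ of size $s-1$ for which the corresponding graph already contains two disjoint connected sets of the required size — this strengthened statement then directly supplies the doubling step. Proving the strengthened hypothesis itself requires a two-phase argument: first apply the standard inductive hypothesis to locate a candidate colour set $C$ and a first large set $A_1$, and then within $V\setminus A_1$ run a more refined analysis of the boundary between $A_1$ and the remaining colour classes (possibly allowing one colour of $C$ to be swapped for another) to extract a second large set in the same palette. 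Balancing these swaps and absorbing the constant-factor losses at each level is where the quantitative heart of the argument will lie, and where the specific exponential $2^s$ behaviour emerges.
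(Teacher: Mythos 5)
Your diagnosis of the central difficulty is exactly right: iterating the inductive hypothesis produces disjoint $(s-1)$-colour connected sets, but there are far too many possible $(s-1)$-colour palettes for a pigeonhole to produce a matching pair, so some form of ``same palette'' guarantee must be built into the argument. However, the fix you propose --- a strengthened induction hypothesis producing \emph{two} disjoint large sets with a common $(s-1)$-colour palette --- has a gap that you do not close and that I do not see how to close along the lines sketched. The issue is propagation: merging $A_1$ and $A_2$ via a bridge colour $c^*$ produces a \emph{single} $s$-colour connected set, so the strengthened hypothesis at level $s$ (two disjoint sets with the same $s$-colour palette) is not reproduced. If instead you try to maintain $2^j$ disjoint sets with a common palette at level $s-j$ and merge them in pairs, you hit a second obstruction: the bridge colours chosen for different pairs $(A_1,A_2)$, $(A_3,A_4)$, etc.\ need not agree, so the merged sets end up with incoherent palettes $C\cup\{c_1^*\}$, $C\cup\{c_2^*\}$, \dots, which defeats the invariant. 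Your proposed ``two-phase boundary analysis with colour swaps'' is where all of this would have to be resolved, and it is left entirely vague.

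The paper's proof (\Cref{thm:f-s-small}, via \Cref{lem:many-same-col-components}) handles both problems at once by a structurally different device. It works one colour per step rather than one palette per step: the majority colour $c_1$ of $K_n$ alone supplies a family of many vertex-disjoint $k$-connected subgraphs (with a tradeoff between how many and how large, quantified by the parameter $j$ in \Cref{lem:many-same-col-components}). These are then \emph{contracted} to single vertices, producing a smaller auxiliary complete (multi-)graph $K_{n_1}$; edges of $K_{n_1}$ are coloured by a colour supplying a matching across the corresponding blobs, and then the \emph{majority} colour of $K_{n_1}$ is selected as $c_2$. Because a single colour is chosen at each level, all the ``bridges'' used at that level automatically cohere, so the invariant ``all current blobs are $k$-connected in colours $c_1,\dots,c_i$'' is maintained for free. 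The exponential $2^s$ then emerges from the size/number tradeoff in the lemma via the recursion $j_i \ge 2j_{i-1} + O(\log\log r)$. In short: your high-level intuition (many disjoint pieces with a common palette, then merge) is the right one, but the operative ideas that make it go --- extracting many disjoint pieces from a \emph{single} colour class, the contraction step that resets the problem, and taking the majority bridge colour --- are all absent from your sketch, and the proposal as written does not yield a proof.
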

We actually prove the same result holds for the more general question of Bollob\'as, requiring a $k$-connected subgraph, provided $n$ is large enough compared to $k$ and $r$, see \Cref{thm:f-s-small} for the exact details. This solves a problem posed by Liu, Morris and Prince in 2007 asking whether the behaviour is exponential, polynomial or something in between in $s$ when $r$ is large compared to $s$.
They asked this problem as an easier variant of a conjecture that a more precise version of \Cref{thm:f-s-small-intro} holds for any $s \le \frac12 \log r$. We disprove this conjecture in a strong form, by improving the above-mentioned exponential upper bound in $s$ to an essentially quadratic one. This bound also determines the answer up to a logarithmic factor for any $s \le \sqrt{r}.$

\begin{thm}\label{thm:f-s-medium-intro}
    For $1\le s \le \sqrt r$ and $n$ large enough we have $f(n,r,s)= \widetilde{\Theta}_r\left(\frac{s^2}{r}\right) \cdot n$.
\end{thm}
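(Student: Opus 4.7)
The lower bound $f(n,r,s)\ge\Omega(s^2/r)\cdot n$ is already known from Liu, Morris and Prince, as cited in the introduction. The novelty of \Cref{thm:f-s-medium-intro} is therefore the matching upper bound: for $n$ sufficiently large in terms of $r$, we must exhibit an $r$-edge colouring of $K_n$ in which every union of $s$ colour classes has all connected components of size at most $\widetilde O(s^2/r)\cdot n$.

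The obvious candidate constructions all fail. A uniformly random $r$-edge colouring gives, for each $s$-subset of colours, a random graph of density $s/r$, which has a giant component of size $(1-o(1))n$ whenever $sn/r\gg 1$. The standard Cayley construction over $\mathbb F_q^d$ (which is the natural affine-plane extension already exploited for \Cref{thm:f-s-small-intro}) yields components of size $\Theta(q^{s-1}/r)\cdot n$, and is minimised at $q=2$ where it gives the exponential-in-$s$ bound $\Theta(2^s/r)\cdot n$. My plan is therefore to use a probabilistic ``list-based'' construction that genuinely exploits the flexibility provided by large $n$. Assign to each vertex $v$ an independent uniformly random subset $L_v\subseteq[r]$ of size $t=\Theta(s\log^c r)$ for a suitable constant $c$, and colour the edge $uv$ by (say) the smallest element of $L_u\cap L_v$ whenever this intersection is non-empty; the remaining edges are coloured by a structured ``fallback'' scheme. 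The key observation is that for a fixed $s$-subset $S\subseteq[r]$ of colours, a vertex $v$ can be incident to an $S$-coloured primary edge only if $L_v\cap S\ne\emptyset$, an event of probability $\approx 1-(1-s/r)^t\approx st/r=\widetilde O(s^2/r)$. By a Chernoff bound and a union bound over the $\binom{r}{s}$ choices of $S$, with positive probability the set $V_S=\{v:L_v\cap S\ne\emptyset\}$ has size $\widetilde O(s^2/r)\cdot n$ simultaneously for every $S$; since every primary $S$-coloured edge lies inside $V_S$, this already bounds the size of every connected component by $|V_S|$.

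The main technical obstacle is the tension between the two competing requirements on $t$: we need $t=\widetilde O(s)$ to make $st/r$ as small as $\widetilde O(s^2/r)$, but we also need $t=\widetilde\Omega(\sqrt r)$ so that $L_u\cap L_v\ne\emptyset$ for essentially every pair and the primary rule covers almost all edges. For $s\le\sqrt r$ these two ranges do not overlap, which forces us to introduce a fallback colouring handling the $\Omega(1)$-fraction of edges whose lists miss each other. The fallback must itself have the property that every $s$-subset of colours spans only small components, which looks like the original problem again; however, on the host graph of ``missed-pair'' edges this can be arranged either recursively (with exponentially fewer effective colours at each step, as in the proof of \Cref{thm:f-s-small-intro}) or by an explicit union-of-affine-planes type structure on a sparse underlying graph. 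The polylogarithmic factor hidden in $\widetilde O$ arises from this recursion combined with the union bound on $|V_S|$, and the ``$n$ large enough'' hypothesis is precisely what is needed to make both the concentration of $|V_S|$ and the union bound across all $s$-subsets succeed with positive probability.
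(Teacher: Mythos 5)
Your lower-bound attribution is correct, and the core idea of the upper bound — a random colouring analysed by a union bound over $s$-subsets of colours — is in the right spirit, but the construction you propose has a genuine unresolved gap, and it is not the route the paper takes.

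The paper's upper bound (\Cref{prop:upr-bnd-f-medium-s}) does not assign random lists to vertices. Instead it colours the edges of a small quotient graph $K_m$, with $m=\lfloor r/(6s)\rfloor$, uniformly and independently at random with $r$ colours, and union-bounds over all $k$-vertex trees and all $\binom{r}{s}$ choices of $s$ colours with $k=\lceil(s+1)\log r\rceil$: the number of such (tree, colour-set) pairs times $(s/r)^{k-1}$ is below $1$, so some colouring of $K_m$ has no $s$-coloured component of size $k$. Blowing up each vertex of $K_m$ into $\approx n/m$ vertices of $K_n$ (and colouring intra-block edges arbitrarily with a colour already incident to that block) then gives $f(n,r,s)\le(s+1)\log r\cdot\lceil n/m\rceil\le\frac{8s(s+1)\log r}{r}n$. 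Because every edge of $K_m$ receives a colour directly, there is no fallback to worry about; the smallness of $m$ is what makes the union bound tractable.

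Your list construction, by contrast, leaves the bulk of the work undone. With $t=\widetilde O(s)$ and $s\ll\sqrt r$, the probability that $L_u\cap L_v\neq\emptyset$ is roughly $t^2/r=\widetilde O(s^2/r)=o(1)$, so a $1-o(1)$ fraction of edges are ``missed pairs'' — the fallback host graph is essentially complete, not sparse as your last paragraph assumes. Consequently the fallback colouring must itself solve (on almost all of $K_n$) the very problem you started with, and the proposed ways around this do not hold up: the recursion ``with exponentially fewer effective colours at each step'' has no mechanism to reduce the colour budget from one level to the next (and the proof of \Cref{thm:f-s-small} you point to is a \emph{lower}-bound argument about finding components, not a construction avoiding them), while an ``explicit union-of-affine-planes'' structure on a dense graph is precisely what fails, since the affine-plane / $\mathbb Z_2^d$ construction is what gives the exponential $2^s/r$ bound. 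Note also that the escape of simply taking $t=\Theta(\sqrt r)$ (so lists always intersect and no fallback is needed) only yields $|V_S|=O(s/\sqrt r)\cdot n$, which is the bound one already gets for free from $f\le g$ and \Cref{cor:g-s-small-ub}, and is off from the target $\widetilde O(s^2/r)\cdot n$ by a factor of order $\sqrt r/(s\log r)$. So as written the proposal does not close the upper bound; the missing ingredient is exactly the idea of passing to a small quotient graph $K_m$ with $m=\Theta(r/s)$ before randomising.
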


Let us note that \Cref{thm:f-s-small-intro} holds for $s \le (1-o_r(1))\log \log r,$ and this is essentially tight since  a more precise version of \Cref{thm:f-s-medium-intro} (see \Cref{prop:f-s-medium}) shows it can not hold for $s\ge (1+o_r(1))\log \log r$. 

The best known upper bound, also from \cite{LMP-multicolour}, when $\sqrt{r} \ll s <\frac{r}2$, was $f(n,r,s) \le \ceil{\left(1-\binom{r}{s}^{-1}\right)n}.$ They also show this bound is tight if $r$ is odd and $s=\frac{r-1}{2}$ and it matches the above-stated lower bound from \cite{LMP-multicolour}, up to an absolute constant in the exponent for $s \ge \Omega(r)$. We improve this upper bound for $\sqrt{r \log r} \le s \ll r$ showing the above lower bound is tight up to an absolute constant in the exponent for the whole of this range.

\begin{thm}\label{thm:f-large-s}
    For $\sqrt{r \log r} \le s \ll r$ and $n$ large enough we have $f(n,r,s)= (1-e^{\Theta(s^2/r)})n.$
\end{thm}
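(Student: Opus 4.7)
The matching lower bound $f(n,r,s)\ge (1-e^{-\Omega(s^2/r)})n$ is already due to Liu, Morris and Prince, so our task is to produce an $r$-edge colouring of $K_n$ in which the union of any $s$ colour classes has largest connected component of size at most $(1-e^{-Cs^2/r})n$, for some absolute constant $C$.

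The plan is a probabilistic clique-cover construction followed by a blow-up. Fix $\alpha=C_0 s/r$ for a large absolute constant $C_0$, and for each colour $c\in[r]$ independently sample $V_c\subseteq V$ by putting each vertex into $V_c$ with probability $\alpha$. For each edge $\{u,v\}$ let $X_{u,v}=\{c\in[r]:u,v\in V_c\}$; if $X_{u,v}\ne\emptyset$ colour $\{u,v\}$ by any element of $X_{u,v}$, otherwise colour it arbitrarily. The key property is that for any $s$-subset $S\subseteq[r]$ and any $v\in T_S=V\setminus\bigcup_{c\in S}V_c$, every covered edge incident to $v$ has colour outside $S$ (since $X_{u,v}\cap S=\emptyset$), so $T_S$ is disconnected from its complement in the $s$-colour subgraph $G_S$ provided no edge at $T_S$ is uncovered.

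The probabilistic analysis has to reconcile two requirements. On the one hand, a fixed edge is uncovered with probability $(1-\alpha^2)^r\le e^{-C_0^2 s^2/r}$, so all $\binom{n}{2}$ edges are covered with positive probability as soon as $n\le e^{C_0^2 s^2/(2r)}$. On the other hand, for any fixed $S$, $|T_S|$ is a sum of independent Bernoulli$((1-\alpha)^s)$ indicators with mean $\ge ne^{-2C_0 s^2/r}$, and Chernoff together with a union bound over the $\binom{r}{s}\le e^{O(s\log r)}$ choices of $S$ succeeds once $n\ge s\log r\cdot e^{O(s^2/r)}$. These two windows overlap precisely when $s^2/r$ dominates $\log r$, i.e.\ in the stated regime $s\ge\sqrt{r\log r}$; reconciling them is the main obstacle of the proof, and the fact that they just barely fit is what pins down the hypothesis of the theorem.

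Finally, for arbitrary large $n$, take any $n_0$ in the overlap above, build the good colouring on $n_0$ vertices, and blow each vertex into a blob of $\lfloor n/n_0\rfloor$ vertices, colouring edges between distinct blobs by the colour of the corresponding edge of $K_{n_0}$ and edges inside each blob arbitrarily. Since an edge between a $T_S$-blob and its complement inherits a colour outside $S$ from the $K_{n_0}$ construction, the union of the $T_S$-blobs is disconnected from its complement in $G_S$ and has size $\ge \tfrac12 ne^{-O(s^2/r)}$, giving the claimed upper bound $f(n,r,s)\le(1-e^{-Cs^2/r})n$.
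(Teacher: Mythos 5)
Your proof is correct, and the core probabilistic construction is the same one the paper uses, just unpacked. The paper derives the upper bound by passing through the function $g$: Lemma 2.2 shows that an $r$-vertex intersecting hypergraph with cover number $>s$ and $m$ edges yields $g(n,r,s)\le n-\lfloor n/m\rfloor$ (and $f\le g$ transfers this to $f$), and Proposition 2.9 builds such a hypergraph with $m\le e^{O(s^2/r)}$ by sampling uniformly random $8s$-subsets of $[r]$. Your argument is the dual description of the same construction: sampling $V_c\subseteq V(K_{n_0})$ for each colour $c$ is equivalent to attaching to each base vertex $v$ a binomial random colour set $e(v)=\{c:v\in V_c\}$, and then the condition that every pair $u,v$ is covered is precisely that the multihypergraph $\{e(v)\}_v$ is intersecting, while the condition that $|T_S|$ is large for every $S$ is a strengthened version of cover number $>s$. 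The window constraint $s\ge\sqrt{r\log r}$ emerges from the same tension the paper reconciles in Proposition 2.9. The differences are cosmetic: the paper uses exactly-$8s$-uniform random edges and needs only one edge in each $(r-s)$-set (the parts of its blow-up already have size $n/m$), whereas you use binomial colour sets and need Chernoff concentration of $|T_S|$ (your blobs have size only $n/n_0$). Your route is self-contained and avoids introducing $g$; the paper's factorisation through Lemma 2.2 is reused to prove its other bounds on $g$. Two routine points to make explicit in a write-up: both failure probabilities should be pushed below $1/2$ so that a union bound gives a simultaneous success, and the fewer than $n_0$ leftover vertices after the blow-up need to be absorbed by rounding some blob sizes up.
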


We also obtain an improved bound for $\sqrt{r} \ll s \le \sqrt{r \log r}$ although for this regime our new bound is not tight. For a summary theorem collecting the current state of affairs on the question of determining $f(n,r,s),$ including our new results, we point the reader to \Cref{thm:f-main} in \Cref{sec:conc-remarks}.

The starting point for the second question we will explore is a problem dating back at least to the work of  Erd\H{o}s and R\'enyi from 1956 \cite{erdos-renyi}, and it asks, given $n$ and $m$, to determine the smallest number of $m$-cliques which cover all edges of $K_n$. They observed an immediate counting lower bound of $\binom{n}{2}/\binom{m}{2}$ and that it is tight if and only if an appropriate combinatorial design exists, as well as proved a variety of asymptotic results. Let us also mention that this is perhaps the most basic and one of the oldest examples of the extensively studied ``clique covering'' problem which asks for how many cliques one needs to cover all edges of an arbitrary graph (see e.g.\ a survey \cite{clique-covering-survey} for more details on the vast body of work in this direction). The question we will consider in this language asks what happens if instead of insisting that each clique has size $m$ (or equivalently size at most $m$) we restrict the size of the union of any $s$ cliques to be at most $m$. We will, however, focus here on an equivalent ``inverse'' formulation of the problem, which in the original $s=1$ instance was first considered by Mills in 1979 \cite{mills-covering} in order to understand what happens if $n$ is not necessarily very large compared to $m$ and the asymptotic results of Erd\H{o}s and R\'enyi and later ones of Erd\H{o}s and Hanani \cite{erdos-hanani} do not yet kick in. The inverse of the classical problem asks, given a covering of the edges of $K_n$ with $r$ cliques, how large does a largest clique we used need to be? In this formulation, our question asks how large a union of some $s$ cliques needs to be. 

In fact, there is a third natural, equivalent reformulation which uncovers a relation to the largest monochromatic component question of Gerencs\'er and Gy\'arf\'as and the ``power of many colours'' generalisation of it discussed above. The classical $s=1$ instance is equivalent to asking how many vertices are certain to be incident to at least one edge of some colour in any $r$-edge colouring of $K_n$. Indeed, given an $r$-edge colouring of $K_n$ we can obtain a clique covering by taking for each colour a clique consisting of all vertices incident to at least one edge of the said colour. For the other direction, given a clique covering we assign colours to each clique and then colour an edge in the colour of a clique covering it (choosing arbitrarily if there is more than one option). In this setting, there is a natural ``power of many colours'' generalisation which simply asks what happens if instead of insisting on a single colour we allow ourselves to use up to $s$ colours. 

Let us denote by {\boldmath$g(n,r,s)$} the answer to this question, so the minimum number of vertices that are certain to be incident to at least one edge in a colour from some set of $s$ colours in any $r$-edge colouring of $K_n$. We note that determining $g(n,r,1)$ corresponds to the classical problems discussed above and is reasonably well understood. There has been plenty of papers focusing on what happens for small values of $r$ \cite{fort-triple-covering,mills-covering, mills2, horak-sauer-covering, todorov, shiloach-covering}. There are also several results \cite{todorov2,furedi90} obtaining more general, precise results provided $r=p^2+p+1,$ and a finite projective plane of order $p$ exists, for example, if in addition $r \mid n$ then $g(n,r,1)= \frac{p+1}{p^2+p+1} \cdot n.$ F\"uredi \cite{furedi90} proved in 1990 a very nice linear programming reduction to a question involving fractional cover numbers of intersecting hypergraphs which can be used to prove various approximate results on $g(n,r,1)$. In terms of asymptotics, it is fairly easy to see that $g(n,r,1)= \frac{1+o_r(1)}{\sqrt{r}} \cdot n$ for $n$ large enough. The lower bound is the above-mentioned counting argument of Erd\H{o}s and R\'enyi, while the basic idea of using projective planes to show the upper bound dates back to a paper of Erd\H{o}s and Graham \cite{erdos-graham} from 1975, who worked on a closely related Ramsey theoretic question. See also a recent paper spelling this out in more detail \cite{discrepancy-of-spanning-trees}. For more information we point the interested reader to Chapter 7 of a survey of F\"uredi \cite{furedi-survey}.

Regarding $g(n,r,s)$, we determine the answer up to a constant factor for all $1\le s \le r,$ as well as obtain much more precise results for various ranges including the correct asymptotics for essentially the whole range. 

In terms of how the two functions are related, we always have $f(n,r,s)\le g(n,r,s)$ since in a connected component using up to $s$ colours we know that every vertex of the component must be incident to an edge in one of these $s$ colours. In some sense this positions the question of determining $g(n,r,s)$ as the ``$0$-connectivity'' instance of the question of Bollob\'as, where in this context we say a graph is $0$-connected if it does not contain an isolated vertex. With this in mind, similarly, as upper bounds on $f(n,r,s)$ serve as often best-known bounds for any instance of the question of Bollob\'as, upper bounding $g(n,r,s)$ provides us with our improved upper bound on $f(n,r,s),$ which is behind \Cref{thm:f-large-s}. 

The known classical results tell us that $f(n,r,1)$ is roughly equal to $\frac{n}{r}$ while $g(n,r,1)$ is roughly $\frac{n}{\sqrt{r}},$ showing that the functions have very different behaviour for $s=1$. On the other hand, we show the functions have the same asymptotic behaviour for $s \ge \sqrt{r \log r}$. The main additional result for the $g$ function is that we also understand its behaviour for all $s \ll \sqrt{r}$, namely $g(n,r,s)=(1+o_r(1))\frac{s}{\sqrt{r}}\cdot n$ showing it grows linearly in $s$ until the answer becomes an absolute constant times $n$, which at least for very small $s$ is in stark contrast to the $f$ function, which, by \Cref{thm:f-s-small-intro}, initially grows exponentially fast in $s$. The following theorem collects most of our results for the $g$ function and showcases its behaviour across the range. 

\begin{thm}\label{thm:g-main}
Given integers $1\le s \le {r}$ and $n$ large enough compared to $r$ we have
{
    \[
  g(n, r, s) =
  \begin{cases}
    \frac{(1+o_r(1))s}{\sqrt{r}} \cdot n, & \text{for } s \ll \sqrt{r}; \\
    \Theta(n), & \text{for } s= \Theta(\sqrt{r}); \\
    \left(1-e^{-\Theta(s^2/r)}\right)\cdot n, & \text{for } s \ge \sqrt{r \log r}; \\
    \left(1-(1-o_r(1))\cdot\binom{r-s-1}{s}/\binom{r}{s}\right)\cdot n, & \text{for } s \ge (1-o_r(1)) \cdot \frac r2; \\
    \ceil{\left(1-{1}/{\binom{r}{s}}\right)n}, & \text{for } s=\frac{r-1}2 \text{ and } r \text{ odd};\\
    n, & \text{for } s\ge \frac r2.
  \end{cases}
\]
}
\end{thm}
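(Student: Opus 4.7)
The plan is to prove \Cref{thm:g-main} case-by-case, pairing tailored constructions (upper bounds on $g$) with either iterated single-colour or probabilistic colour-selection arguments (lower bounds on $g$). For any $r$-coloured $K_n$ with $V_c = \{v : v \text{ touches an edge of colour } c\}$ and $\sigma(v) = \{c : v \in V_c\}$, the $\sigma(v)$'s form an intersecting family on $[r]$, and $|X_C|=|\{v : \sigma(v)\cap C = \emptyset\}|$ is the uncovered count that one must bound.

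For the low regime $s \ll \sqrt{r}$, I would iterate the classical Erd\H{o}s--R\'enyi single-colour estimate $g(m,r,1) \ge (1-o_m(1))m/\sqrt r$, which comes from $\sum_c\binom{|V_c|}{2}\ge \binom{m}{2}$ and convexity. At step $i$ one chooses a colour with $|V_c\cap U_i|\ge (1-o_r(1))|U_i|/\sqrt r$ in the still-uncovered set $U_i$ (which is itself a complete edge-coloured graph), giving the recursion $|U_s|\le n(1-(1-o_r(1))/\sqrt r)^s = n - (1-o_r(1))sn/\sqrt r$. The matching upper-bound construction partitions $[n]$ into $q^2+q+1$ equal groups indexed by points of the projective plane of prime-power order $q=(1-o_r(1))\sqrt r$ (which exists by the known density of prime powers), and assigns to each of the $r$ colours a clique equal to the union of the groups lying on one line. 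Any $s$ lines then cover $s(q+1)-\binom{s}{2} = (1+o_r(1))sn/\sqrt r$ vertices via the standard incidence count. This same recursion handles $s=\Theta(\sqrt r)$: $|U_s|\le n(1-1/\sqrt r)^{\Theta(\sqrt r)}=\Theta(n)$, and hence $g=\Theta(n)$.

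For the high regime $s \ge \sqrt{r\log r}$ the iterative bound saturates at only $ne^{-O(s/\sqrt r)}$, much weaker than the desired $ne^{-\Theta(s^2/r)}$, so I would switch strategy. Upper bound (sunflower construction): fix a distinguished colour $c_0$, draw independent uniform $(s-1)$-subsets $R(v)\subseteq[r]\setminus\{c_0\}$, set $\tau(v)=\{c_0\}\cup R(v)$, and edge-colour so that $\sigma(v)=\tau(v)$ (feasible because all $\tau$'s share $c_0$ as a common pivot, with a standard random-plus-alteration argument ensuring each colour in $R(v)$ is realised at $v$). Any $C$ containing $c_0$ covers every vertex, while for $C\not\ni c_0$, $|X_C|=|\{v : R(v)\cap C=\emptyset\}|$ has expectation $n\binom{r-s-1}{s-1}/\binom{r-1}{s-1}=ne^{-\Theta(s^2/r)}$ and concentrates; a union bound over the $\binom{r}{s}$ choices of $C$ succeeds exactly when $s\ge \sqrt{r\log r}$, since then the Chernoff deviation cost beats $\log\binom{r}{s}$. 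Lower bound: for a uniform random $C$, $\mathbb{E}|X_C|\le \sum_v \binom{r-d(v)}{s}/\binom{r}{s}\le \sum_v e^{-sd(v)/r}$ (where $d(v)=|\sigma(v)|$), and together with the intersection constraint and the variance-type inequality $\sum_c|V_c|^2\ge n^2$ this can be pushed down to $ne^{-\Omega(s^2/r)}$ after a preprocessing step that places into $C$ any colour with $|V_c|\ge (1-e^{-\Omega(s^2/r)})n$ and then randomises among the rest.

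The exact endpoint formulas for $s\ge (1-o_r(1))r/2$, $s=(r-1)/2$ with $r$ odd, and $s\ge r/2$ reduce to the minimum hitting set of the intersecting hypergraph $\{\sigma(v):v\in V\}\subseteq 2^{[r]}$. For $s\ge r/2$ a pigeonhole (at most one of $S,[r]\setminus S$ can lie in an intersecting family) yields a hitting set of size $\lceil r/2\rceil$, forcing $X_C=\emptyset$ and so $g=n$; the odd-$r$, $s=(r-1)/2$ value $\lceil(1-1/\binom{r}{s})n\rceil$ is matched by a construction assigning one group of size $\lceil n/\binom{r}{s}\rceil$ to each $s$-subset of an extremal intersecting family, together with a compatible edge-colouring. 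The main obstacle I expect is the transition regime $\sqrt r\ll s\ll \sqrt{r\log r}$, where the iterative lower bound becomes too weak while the union bound in the sunflower construction still fails; bridging this gap is what forces the sunflower construction to be delicate and the intersecting-family structure to carry most of the lower-bound weight.
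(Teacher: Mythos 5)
Your proposal contains a fatal flaw in the high regime $s \ge \sqrt{r\log r}$, and it provides no valid argument for the precise asymptotics near $s \approx r/2$, which is the most technical part of the paper.

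\textbf{The sunflower construction is broken.} Recall that $g(n,r,s) = \min_{\text{colouring}}\max_{|C|=s}\bigl|\{v : \sigma(v)\cap C\ne\emptyset\}\bigr|$: the adversary gets to pick the $s$ colours. In your construction every $\tau(v)$ contains the core $c_0$, and you yourself observe that ``any $C$ containing $c_0$ covers every vertex''. But then the maximum over $C$ is exactly $n$, so this colouring only certifies the trivial bound $g\le n$. The intersecting hypergraph $H=\{\tau(v)\}$ needs \emph{two} properties to feed into the reduction (the paper's \Cref{lem:upper-bounds}): it must be intersecting, \emph{and} every set of $r-s$ colours must contain an edge of $H$. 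A sunflower with core $c_0$ violates the second property catastrophically: the $(r-1)$-set $[r]\setminus\{c_0\}$ contains no edge at all. The paper fixes this in \Cref{prop:g-big-s-ub} by taking $m=\lfloor e^{u^2/(2r)}\rfloor$ \emph{independent uniform} $u$-subsets with $u=8s$ (no pivot); the parameter $u$ is tuned so that a union bound simultaneously forces $H$ to be intersecting and every $(r-s)$-set to contain an edge. You also have no valid upper bound for $s\ge (1-o_r(1))r/2$: the paper's \Cref{prop:g-almost-end-of-regime} builds a pseudorandom $(s+1)$-uniform intersecting hypergraph via a dependent-rounding construction analysed with Janson's inequality and the Lov\'asz Local Lemma, which is genuinely delicate and not recoverable by ``random plus alteration''.

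\textbf{The rest is essentially right, via a different route on the lower bounds.} For $s\ll\sqrt r$ and $s=\Theta(\sqrt r)$, your greedy iteration (remove the vertex set touched by the densest colour, recurse on the still-uncovered clique) is a valid alternative to the paper's one-shot argument. The paper instead proves \Cref{lem:lower-bound-g}, a clean dichotomy: either some vertex sees $\le d$ colours (and then its $s$ most frequent incident colours cover $\ge 1+s(n-1)/d$ vertices), or every vertex sees $>d$ colours, in which case a uniformly random $s$-set covers in expectation $\ge n(1-\binom{r-d-1}{s}/\binom{r}{s})$ vertices. This single lemma, instantiated with $d=\lfloor\sqrt r\rfloor$ or $d=s$, yields all the lower bounds in one pass, and is cleaner than your iteration (which must quietly track that $|U_i|$ stays large and that the picked colours stay distinct) and much cleaner than your high-regime preprocessing of large colour classes. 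Your projective-plane construction for the upper bound when $s\ll\sqrt r$ is the same as the paper's up to projective duality, though your count ``$s(q+1)-\binom{s}{2}$'' is the \emph{minimum} of the union over $s$ lines (Bonferroni), not the maximum you need; fortunately the trivial maximum $s(q+1)$ gives the same $(1+o_r(1))sn/\sqrt r$. Your pigeonhole argument for $s\ge r/2$ (at most one of $C$, $[r]\setminus C$ can fail to hit an intersecting family) is correct and is essentially the hypergraph reformulation of the Erd\H{o}s--Rado observation the paper invokes. For $s=(r-1)/2$, $r$ odd, the construction should assign groups to $(s+1)$-subsets (the complete $(s+1)$-uniform hypergraph on $r=2s+1$ vertices), not $s$-subsets; the numerics coincide because $\binom{r}{s}=\binom{r}{s+1}$, but as written the $s$-uniform intersecting family has only $\binom{r-1}{s-1}$ members.
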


We note that in addition to the colouring formulation we mostly focused on, all our results can be translated to the alternative, clique-cover formulations of the problem mentioned above. There is one final, Ramsey theoretic reformulation which we want to mention.
To this end let us denote by $\mathcal{T}_m$ the set of all trees on $m$ vertices and by $\mathcal{F}_m$ the set of all spanning forests (without isolated vertices) on $m$ vertices. Then determining the $r$-colour Ramsey number of these two families is equivalent to determining $f(n,r,1)$ and $g(n,r,1)$. Let us note that the question of obtaining bounds on Ramsey numbers of an arbitrary member of $\mathcal{F}_m$ as well as $\mathcal{T}_m$ has already been considered in 1975 by Erd\H{o}s and Graham \cite{erdos-graham}, and one can use the same argument to get an approximate understanding of the behaviour of $f(n,r,1)$ and $g(n,r,1)$. If one now considers the ``power of many colours'' generalisation of these two Ramsey problems, namely if instead of a monochromatic copy, we seek one which uses up to $s$ colours, so precisely in the sense introduced by Erd\H{o}s, Hajnal and Rado, and discussed at the beginning of the section, one recovers the problems of determining $f(n,r,s)$ and $g(n,r,s)$.

Most of our arguments use various probabilistic tools and ideas, often both for lower and upper bounds, in certain regimes expansion arguments come in useful. We also prove a number of extremal set theoretic reductions which often lead to interesting questions in their own right, which we collect and discuss further in \Cref{sec:conc-remarks}.

\vspace{-0.1cm}

\subsection{Organisation and notation}\label{sec:organisation}
We begin with our results on the generalised clique covering function $g(n,r,s)$ in \Cref{sec:g}. We open the section by proving two general tools. The first one, \Cref{lem:lower-bound-g}, is a simple probabilistic bound that we will use to prove most of our lower bounds. The second one, \Cref{lem:upper-bounds}, gives a reduction to an extremal set-theoretic problem, which is behind most of our upper bounds. 

In \Cref{sec:g-small} we prove the first two parts of \Cref{thm:g-main}, including a more precise lower bound, given certain divisibility conditions, in the form of \Cref{prop:g-upper-bound}. In the subsequent \Cref{sec:g-large} we prove the remaining parts of \Cref{thm:g-main} concerning larger values of $s$. The third and fourth parts follow from \Cref{prop:g-s-big-lb} and \Cref{cor:g-s-big-ub}. The fifth part follows from \Cref{prop:g-almost-end-of-regime} and \Cref{cor:g-almost-end-of-regime} which is perhaps the most involved of the arguments related to the function $g$ including an intriguing, somewhat non-standard application of Janson's Inequality in tandem with the Lov\'asz Local Lemma. The penultimate part is established in \Cref{g-end-of-regime}. The final part is the easy reduction to the Erd\H{o}s-Rado observation, which was already discussed in the Introduction. In \Cref{sec:small-precise-g} we collect what our results tell us about the behaviour of $g(n,r,s)$ for small values of $r$.

In \Cref{sec:f} we prove our results on the large connected component using few colours function $f(n,r,s)$. We begin with a part of the upper bound of \Cref{thm:f-s-small-intro}, due to \cite{LMP-multicolour}. We include it for completeness and since it helps give motivation for the argument behind our new lower bound, which we prove next as \Cref{thm:f-s-small}. We note that we prove this result in the full generality of the question of Bollob\'as requiring $k$-connectivity of the subgraph we find. We continue with a slightly simpler argument for lower bounding $f(n,r,s),$ which improves the previously best constant in the exponent as \Cref{prop:f-s-medium}, and which combined with our new upper bound for the $g$ function proves \Cref{thm:f-large-s}. We conclude the section with our new upper bound for $f(n,r,s),$ for $s \le \sqrt{r}$ which we prove as \Cref{prop:upr-bnd-f-medium-s}, and which combined with the known lower bound proves \Cref{thm:f-large-s}.

In \Cref{sec:conc-remarks} we make some final remarks and collect a number of interesting open problems.

\textbf{Notation.} Given a graph or a hypergraph $H$, we will denote by $V(H)$ and $E(H)$ its vertex and edge set respectively, and by $d(H)$ its average degree. We use the standard asymptotic notation. Some conventions are that we write $g=O(f)$ if there exists an absolute constant $C>0$ such that $g(x)\le C\cdot f(x)$ for all $x$. We also write  $g=\Omega(f)$ if $f=O(g),$ and $\Theta(f)$ if both $g=O(f)$ and $f=O(g)$. We index the asymptotic notation by a variable we are letting tend to infinity when we wish to emphasise or clarify which one drives the asymptotics. For example, we write $o_r(f)$ to be a function which tends to zero when divided by $f$ as the parameter $r$ tends to infinity. We also write $f \ll g$ to mean $f=o(g)$ when both $f$ and $g$ depend only on a single parameter which is then assumed to tend to infinity. We write $f=\widetilde{\Theta}_r(g)$ if $f= \Omega(g)$ and $f\le O(g) \cdot \log r$. All our logarithms are in base two. Given two events $A$ and $B$ in the same probability space, we write $A \sim B$ if they are dependent. 

\section{Power of many colours for covering vertices}\label{sec:g}
In this section, we prove our results on the generalisation of the clique-covering problem, i.e.\ determining the behaviour of the function $g(n,r,s)$. We note though that most of the upper bounds we will prove in this section are also the best upper bounds we have for the large connected component using few colours function $f(n,r,s)$ using the easy bound $f(n,r,s)\le g(n,r,s)$ explained in the introduction.
We begin with the general tools proven in the following subsection, we then proceed to deduce various parts of our main result, \Cref{thm:g-main}, using them. 

\subsection{General tools}
We begin with the following simple lemma which is behind most of our lower bounds on the generalised clique covering function $g(n,r,s)$.
\begin{lem}\label{lem:lower-bound-g} 
Let $r,s$ and $d$ be positive integers such that $s \le d < r-s$. Then
$$g(n,r,s)\ge \min \left \{1+\frac{s(n-1)}{d}, \left(1-\binom{r-d-1}{s}\Big/\binom{r}{s}\right)n \right \}.$$
\end{lem}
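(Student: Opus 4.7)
The plan is to fix an arbitrary $r$-edge colouring of $K_n$ and, for each vertex $v$, look at the \emph{colour degree} $C(v) \subseteq [r]$, namely the set of colours appearing on edges incident to $v$. The goal is to exhibit, depending on the colouring, an $s$-subset $S$ of $[r]$ for which the number of vertices incident to some edge of colour in $S$ hits one of the two claimed bounds. The natural dichotomy is: either some vertex has very small colour degree (and we use pigeonhole) or every colour degree is large (and we use a random choice).

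In the first case, suppose there exists $v$ with $|C(v)|=d_v\le d$. If $d_v\le s$, then since $s\le d<r-s$ we can extend $C(v)$ to an $s$-set $S\subseteq[r]$; since every edge at $v$ carries a colour of $C(v)\subseteq S$, every vertex is covered, which beats both bounds. Otherwise $s<d_v\le d$, and among the $n-1$ edges at $v$ distributed over the $d_v$ colours in $C(v)$, the $s$ most frequent colours account for at least $s(n-1)/d_v\ge s(n-1)/d$ edges. Taking $S$ to be these $s$ colours, $v$ together with the (distinct) neighbours along those edges are all covered, giving coverage at least $1+s(n-1)/d$, which is the first bound.

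In the second case, every vertex has $|C(v)|\ge d+1$. Pick $S$ uniformly at random among $s$-subsets of $[r]$. For each $v$,
\[
\Pr[S\cap C(v)=\emptyset]=\binom{r-|C(v)|}{s}\Big/\binom{r}{s}\le \binom{r-d-1}{s}\Big/\binom{r}{s},
\]
where we use $|C(v)|\ge d+1$ and the hypothesis $d<r-s$ to know $\binom{r-d-1}{s}$ is well defined. Summing and applying the probabilistic method yields a choice of $S$ covering at least $n\bigl(1-\binom{r-d-1}{s}/\binom{r}{s}\bigr)$ vertices, which is the second bound. Since one of the two cases must occur, we get at least the minimum of the two expressions, proving the lemma.

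There is no real obstacle here: the proof is a clean case split, with the only subtlety being to verify that the hypothesis $s\le d<r-s$ is exactly what is needed to make both halves of the argument (the extension of $C(v)$ to an $s$-set in the first case, and the positivity and monotonicity of $\binom{r-|C(v)|}{s}$ in the second) valid.
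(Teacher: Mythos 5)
Your proof is correct and follows the same two-case strategy as the paper: pigeonhole on the colour degrees at a vertex with few incident colours, versus a uniformly random $s$-set of colours when every vertex sees at least $d+1$ colours, then the first-moment method. The only difference is that you explicitly separate the sub-case $|C(v)|\le s$ (where you cover all $n$ vertices), which the paper glosses over but which is handled by the same monotonicity observation; this is a harmless extra precaution, not a different argument.
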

\begin{proof}
Consider an $r$-edge colouring of $K_n$. Our goal is to show that there exist $s$ colours such that there are at least $1+\frac{s(n-1)}{d}$ vertices incident to an edge coloured in one of these $s$ colours or at least $n-n \cdot \binom{r-d-1}{s}/\binom{r}{s}$.   

Suppose first that there is a vertex $v$ for which there are at most $d$ different colours used on the edges incident to $v$. In this case, the $s$ colours with most edges incident to $v$ account for at least $\frac{s(n-1)}{d}$ edges incident to $v$. Counting, in addition, $v$ this implies that there are at least $1+\frac{s(n-1)}{d}$ vertices incident to an edge coloured using one of these $s$ colours, as desired. 

Hence, we may assume that for every vertex $v$ there are at least $d+1$ different colours used on the edges incident to $v$. We will choose our $s$ colours uniformly at random. The probability that we did not pick any of the at least $d+1$ colours incident to $v$ is at most ${\binom{r-d-1}s}/{\binom{r}s}$. 
So the expected number of vertices not incident to an edge using one of our random $s$ colours is at most $n \cdot {\binom{r-d-1}s}/{\binom{r}s}$. In particular, there exists a choice of $s$ colours for which there are at most this many vertices not incident to an edge using one of them. We conclude that there are at least $n-n \cdot \binom{r-d-1}{s}/\binom{r}{s}$ vertices incident to an edge using one of these colours. 
\end{proof}

We note that strictly speaking the assumptions $s \le d$ and $d<r-s$ are not necessary in the above lemma. This follows since the terms in the minimum are decreasing and increasing in $d$ respectively and evaluate to $n$ in the case $d=s$ and $d=r-s$ respectively. We will only ever use the lemma in the specified range since the same reasoning explains one cannot obtain a better bound by choosing $d$ outside this range. 

We next turn to a lemma which will help us prove our upper bounds on $g(n,r,s)$. 

\begin{lem}\label{lem:upper-bounds}
    Let $1\le s \le r$ be integers. Suppose there exists a hypergraph $H$ with vertex set $[r]$ such that \begin{enumerate}
        \item \label{itm:1} for any $e,f \in E(H)$ we have $e \cap f \neq \emptyset$, i.e.\ $H$ is intersecting, and 
        \item \label{itm:2} any set of $r-s$ vertices of $H$ contains at least $t$ distinct edges. 
    \end{enumerate}
    Then $$g(n,r,s) \le n- t \cdot \floor{\frac{n}{|E(H)|}}.$$
\end{lem}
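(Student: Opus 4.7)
The plan is to use $H$ as a blueprint for an explicit $r$-edge-colouring of $K_n$ that realises the claimed upper bound. Identify the colour set with the vertex set $[r]=V(H)$, let $m := |E(H)|$, and partition $V(K_n)$ into $m$ parts $\{V_e\}_{e \in E(H)}$ indexed by the edges of $H$, each of size at least $\lfloor n/m\rfloor$ (any leftover vertices, being at most $m-1$, are distributed arbitrarily). For each edge $uv$ of $K_n$ with $u \in V_{e_1}$ and $v \in V_{e_2}$ (possibly $e_1 = e_2$), colour $uv$ with an arbitrary colour $c \in e_1 \cap e_2$. This choice is always possible: if $e_1 = e_2 = e$ the intersection is $e$, which is non-empty (we may assume $H$ has no empty edges, as dropping them only weakens condition (2)), and if $e_1 \ne e_2$ the intersection is non-empty by condition (1). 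The property built into the colouring is that every edge incident to a vertex $v \in V_e$ receives a colour from the set $e \subseteq [r]$.

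With the construction in hand, the counting is immediate. Fix any $s$-subset $C \subseteq [r]$ of colours and set $S := [r] \setminus C$, so $|S| = r-s$. By condition (2), $S$ contains at least $t$ distinct edges of $H$; for each such edge $e \subseteq S$ we have $e \cap C = \emptyset$, and hence no edge incident to any vertex of $V_e$ carries a colour from $C$. These at least $t$ parts are disjoint and each of size at least $\lfloor n/m \rfloor$, so at least $t\cdot \lfloor n/m\rfloor$ vertices of $K_n$ are incident to no edge of colour in $C$, leaving at most $n - t\lfloor n/m\rfloor$ vertices that are incident to some edge of colour in $C$. As this holds for every $s$-subset $C$, the bound on $g(n,r,s)$ follows.

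There is no serious technical obstacle here — the work of the lemma is really a clean encoding of the extremal combinatorics of $H$ into a colouring. The one conceptual point worth flagging is the duality driving the whole argument: the set of $s$ \emph{forbidden} colours corresponds to the complementary $(r-s)$-set of vertices of $H$, and condition (2) is exactly the statement that this complement is rich enough in edges of $H$ to produce many ``safe'' blocks $V_e$ in the blow-up colouring.
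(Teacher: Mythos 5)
Your proposal is correct and is essentially the same as the paper's proof: both partition $V(K_n)$ into $|E(H)|$ parts indexed by edges of $H$, colour cross-edges by an arbitrary common colour (using the intersecting property), and observe that for any $s$-set of colours $C$, the $\ge t$ edges of $H$ contained in $[r]\setminus C$ yield $\ge t\lfloor n/m\rfloor$ vertices untouched by any colour in $C$. Your remark about discarding the empty edge is a small unstated point in the paper, but it is trivially consistent with the intersecting hypothesis.
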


\begin{proof}
Our goal is to colour the edges of $K_n$ into $r$ colours in such a way that for any $s$ colours there are quite a few vertices of $K_n$ not incident to an edge coloured using one of the $s$ colours. To do this we will exploit the existence of the auxiliary hypergraph $H$.  

Let $m:=|E(H)|$ and let us partition the vertices of $K_n$ into $m$ parts $A_e$, one for each edge $e \in E(H)$ with each $A_e$ of size either $\floor{n/m}$ or $\ceil{n/m}$. We note that the $r$ vertices of $H$ represent the $r$ colours we are going to use while an edge $e \in E(H)$ should be viewed as the set of colours we are ``permitted'' to use on the edges of our $K_n$ incident to a vertex belonging to $A_e$. With this in mind, we now colour all the edges of $K_n$ as follows. Given any two distinct vertices belonging to $A_e$ and $A_{e'}$ respectively (with possibly $e=e'$) we colour the edge between them using an arbitrary colour in $e \cap e'$.  We know such a colour always exists since $e \cap e'\neq \emptyset,$ by the assumption that $H$ is intersecting, namely property \ref{itm:1}.

Now to show this colouring satisfies the desired property suppose $S$ is an arbitrary set of $s$ colours. By property \ref{itm:2} we know the set of $r-s$ colours in $V(H) \setminus S$ contains at least $t$ edges, the set of which we denote by $T$. Observe now that, any vertex $v$ belonging to $A_e$ with $e \in T$ is not incident to an edge coloured in a colour in $S$. Indeed, by the definition of our colouring, we are only ever colouring an edge incident to a vertex $v \in A_e$ by a colour which belongs to $e$ and since $e \in T$ we know $e \subseteq V(H) \setminus S,$ so, in particular, no colour in $S$ is allowed to be used. This implies the desired claim since there are at least $t \cdot \floor{n/m}$ vertices of $K_n$ contained in one of the sets $A_e$ with $e \in T$.
\end{proof}

Both above lemmas might seem somewhat wasteful but as we will soon see both can be tight and they suffice to give a pretty good understanding of the behaviour of $g(n,r,s)$ in general. 

Let us also note that if one starts with an $r$-edge colouring of $K_n$ one can define a hypergraph $H$ with the vertex set being the set of $r$ colours and an edge $e(v)$ of $H$ for each vertex $v$ of $K_n$ where $e(v)$ consists of all the colours appearing on edges of $K_n$ incident to $v$. We note that certain edges might appear multiple times so $H$ is strictly speaking a multi-hypergraph. By construction, $H$ must be intersecting and the number of vertices of $K_n$ not incident to an edge of colour in some set $S$ of $s$ colours is precisely equal to $n$ minus the number of edges of $H$ induced by $V(H)\setminus S$. This shows that the question of determining $g(n,r,s)$ is equivalent to finding a multi hypergraph $H$ as in \Cref{lem:upper-bounds} with the maximum possible value of $t$.

Let us also observe that we are not specifying the uniformity of $H$; in fact, we are free to choose it according to our heart's desire. That said, if any edge of $H$ has size at most $s$ then its complement is a set of at least $r-s$ vertices not containing any edges (since $H$ must be intersecting), so in that case the best $t$ we can choose is $0$ and we only obtain the trivial bound. In particular, if we wish to obtain non-trivial bounds we should always choose every edge of $H$ to have a size of at least $s+1$. In most of our applications of the lemma, we will choose $H$ to be $(s+1)$-uniform.

\subsection{Bounds when \texorpdfstring{$s$}{s} is small}\label{sec:g-small}

In this section, we will show our results on $g(n,r,s)$ for $s \le \sqrt{r}$. We begin with an upper bound under some divisibility conditions.
\begin{prop}\label{prop:g-upper-bound}
Let $s$ be a positive integer, and let $p$ be a power of a prime number. Setting $r=p^2+p+1$ we have for any $n$ divisible by $r$
$$g(n,r,s)\le (sp+1)\cdot \frac{n}{r}.$$
\end{prop}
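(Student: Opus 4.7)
The plan is to invoke \Cref{lem:upper-bounds} with the hypergraph $H$ built from a projective plane of order $p$, which exists since $p$ is a prime power. I take $V(H)$ to be the $r = p^2+p+1$ points (identified with the $r$ colours) and $E(H)$ to be the $r$ lines, each viewed as its $(p+1)$-set of points. The intersecting condition is then immediate, since any two lines of a projective plane share exactly one point.

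The main step is verifying the second condition of \Cref{lem:upper-bounds} with $t = p(p+1-s)$: given any $s$-subset $S$ of $V(H)$, I claim at least $p(p+1-s)$ lines are disjoint from $S$, equivalently at most $sp + 1$ lines meet $S$. Let $L_k$ be the number of lines meeting $S$ in exactly $k$ points. Double-counting point--line incidences and pairs of points of $S$ lying on a common line yields
\begin{align*}
    \sum_{k \ge 1} k\, L_k = s(p+1), \qquad \sum_{k \ge 2} \binom{k}{2} L_k = \binom{s}{2}.
\end{align*}
One may assume $s \le p$, as otherwise $(sp+1)n/r \ge n$ and the bound is trivial. Then $k \le s$ throughout, so $(k-1)/\binom{k}{2} = 2/k \ge 2/s$ for $2 \le k \le s$, and consequently
$$
\sum_{k \ge 1} L_k \;=\; s(p+1) - \sum_{k \ge 2} (k-1)\, L_k \;\le\; s(p+1) - \frac{2}{s}\binom{s}{2} \;=\; sp+1,
$$
establishing the claim.

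Applying \Cref{lem:upper-bounds} with $|E(H)| = r$ and using $r \mid n$ then gives
$$
g(n,r,s) \;\le\; n - p(p+1-s)\cdot\frac{n}{r} \;=\; \frac{(sp+1)\,n}{r},
$$
as required. The only nontrivial part of the argument is the LP-flavoured weighted estimate in the middle step, and its sharpness is easy to see: when $S$ consists of $s$ collinear points there are exactly $sp+1$ lines meeting $S$ --- the common line together with $p$ further lines through each of the $s$ points --- confirming that the projective plane is indeed the correct hypergraph to plug into \Cref{lem:upper-bounds}.
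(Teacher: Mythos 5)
Your proof is correct and uses the same key ingredients as the paper: the projective-plane hypergraph of order $p$ and \Cref{lem:upper-bounds}. The only difference is in how the bound of $sp+1$ on the number of lines meeting a fixed $s$-set $S$ is established. The paper does a direct greedy count: fix $v \in S$; it lies on exactly $p+1$ lines, and each further $w \in S\setminus\{v\}$ contributes at most $p$ new lines since the unique line through $v$ and $w$ is already counted, giving $p+1 + (s-1)p = sp+1$. You instead set up the double-count $\sum_k k L_k = s(p+1)$, $\sum_k \binom{k}{2} L_k = \binom{s}{2}$ and apply the pointwise inequality $k-1 \ge \tfrac{2}{s}\binom{k}{2}$ for $2 \le k \le s$; this is a valid and somewhat more LP-flavoured derivation. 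Both arguments are short and correct, and both correctly reduce to $s \le p$; the paper's greedy count is slightly more economical, while yours makes the tightness for collinear $S$ more transparent.
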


\begin{proof}
Let $r=p^2+p+1$. If $s\ge p+1$ then the inequality is implied by the trivial upper bound of $g(n,r,s)\le n$, so we may assume $s \le p$. By the assumption that $p$ is a power of a prime, we know there exists a projective plane of order $p$. In particular, there exists an $r$-vertex, $r$-edge, $(p+1)$-uniform hypergraph $H$ in which every vertex belongs to exactly $p+1$ edges and every pair of vertices belongs to a unique edge. 

Observe first that if we fix a set $S$ of $s$ vertices of $H$ it intersects at most $sp+1$ edges of $H$. To see this fix some $v \in S$. Note that $v$ is contained in precisely $p+1$ edges, but each $w \in S \setminus \{v\}$ belongs to at most $p$ edges not already accounted for (since one of the edges containing $w$ contains $v$ as well). Therefore, there are at most $p+1+(s-1)p=sp+1$ edges intersecting $S$ in total. 

This in particular implies that any set of $r-s$ vertices contains at least $r-sp-1$ edges of $H,$ so \Cref{lem:upper-bounds} implies 
$$g(n,r,s)\le n-(r-sp-1)\cdot \frac{n}{r}= (sp+1)\cdot \frac{n}{r},$$
as desired.
\end{proof}

\Cref{prop:g-upper-bound} is tight for $s=1$ and $s=2$ by applying \Cref{lem:lower-bound-g} with $d=p$. We will also soon see that it is asymptotically tight for any $s\ll \sqrt{r}$. But, let us first extract from it an approximate upper bound which applies without any divisibility assumptions and is similarly asymptotically optimal for $s\ll \sqrt{r}$ and somewhat large $n$.

\begin{cor}\label{cor:g-s-small-ub}
    Suppose $n \gg r^{3/2}/s$ then
    $$g(n,r,s) \le (1+o_r(1))\cdot \frac{s}{\sqrt{r}}\cdot n,$$
    where the $o_r(1)$ term is a function of $r$ which tends to $0$ as $r \to \infty$.
\end{cor}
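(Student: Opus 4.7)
The plan is to reduce to \Cref{prop:g-upper-bound} by picking a prime $p$ as close to $\sqrt r$ as possible, combined with a simple monotonicity argument for $g$ in its second variable.

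First, by the prime number theorem, for sufficiently large $r$ there exists a prime $p$ with $p^2+p+1 \le r$ and $p = (1-o_r(1))\sqrt{r}$: indeed $\pi(\sqrt r) - \pi((1-\eps)\sqrt r) \to \infty$ for any fixed $\eps > 0$, and letting $\eps$ shrink slowly with $r$ yields such a prime. Set $r' := p^2+p+1 \le r$, and observe that $g(n,r,s) \le g(n,r',s)$: any $r'$-coloring of $K_n$ can be viewed as an $r$-coloring that avoids colors $r'+1, \ldots, r$, and since those missing colors contribute nothing to the coverage of any $s$-subset, the maximum coverage over $\binom{[r]}{s}$ for this coloring equals the maximum over $\binom{[r']}{s}$.

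Second, I would reapply the construction of \Cref{prop:g-upper-bound} without insisting on $r' \mid n$. Since \Cref{lem:upper-bounds} already accommodates arbitrary $n$ via its floor function, the projective plane hypergraph on $[r']$ used in the proof of \Cref{prop:g-upper-bound} immediately yields
\[
g(n, r', s) \le n - (r' - sp - 1)\lfloor n/r' \rfloor \le \frac{(sp+1)\,n}{r'} + r'.
\]
Using $r' \ge p^2$ and $p = (1-o_r(1))\sqrt r$, this simplifies to
\[
g(n,r',s) \le (1+o_r(1))\,\frac{sn}{\sqrt r} + O\!\left(\tfrac{n}{r}\right) + r'.
\]

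Finally, both lower-order terms can be absorbed into $o(sn/\sqrt r)$: the $n/r$ term is smaller than $sn/\sqrt r$ by a factor of $s\sqrt r \to \infty$, while the $r'$ term is $o(sn/\sqrt r)$ precisely under the hypothesis $n \gg r^{3/2}/s$, since $r'/(sn/\sqrt r) \le r^{3/2}/(sn) \to 0$. The only real obstacle here is the prime-density input: Bertrand's postulate would cost us a factor of $2$ in the leading constant, so one needs the prime number theorem (or any result giving primes in every interval of the form $[(1-o(1))x, x]$) to secure the sharp $(1+o_r(1))$ constant we want.
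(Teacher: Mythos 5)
Your proof is correct and follows essentially the same approach as the paper: pick a prime $p=(1-o_r(1))\sqrt r$, set $r'=p^2+p+1$, use monotonicity $g(n,r,s)\le g(n,r',s)$, and then apply the projective-plane construction via \Cref{prop:g-upper-bound} (or equivalently \Cref{lem:upper-bounds}), absorbing the error terms using $n\gg r^{3/2}/s$. The only cosmetic difference is that the paper handles $r'\nmid n$ by rounding $n$ down to $n'$ and paying $n-n'\le r'$, whereas you note that the floor already built into \Cref{lem:upper-bounds} absorbs this — both yield the same additive $O(r')$ error and the same conclusion.
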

\begin{proof}
Note that the desired inequality holds by the trivial upper bound of $g(n,r,s) \le n$ if $s \ge \sqrt{r}$ so we may assume $s<\sqrt{r}.$
Let $r'$ be the largest integer smaller or equal to $r$ of the form $p^2+p+1$ where $p$ is a prime. By well-known results on the difference between consecutive primes (see e.g.\ \cite{primes}) we know that $p=(1-o_r(1))\sqrt{r}.$ Let finally $n'=n- (n \bmod r')$ so that $r' \mid n'$ and $n'=(1-o_r(1))n$.
We now have 
$$g(n,r,s) \le g(n,r',s) \le g(n',r',s)+n-n'\le (sp+1)\cdot \frac{n'}{r'}+r'\le (1+o_r(1))\cdot \frac{s}{\sqrt{r}}\cdot n,$$
where in the penultimate inequality we used \Cref{prop:g-upper-bound} and in the final the assumption that $n \gg r^{3/2}/s$.    
\end{proof}

We note that some lower bound on $n$ in terms of $s$ and $r$ is clearly necessary, 
and one can easily extend the range for which the above result applies by weakening it after replacing the $1+o_r(1)$ term with a $O_r(1)$ term.

We now turn to the lower bounds. We begin by showing that \Cref{cor:g-s-small-ub} is asymptotically tight.
\begin{prop}
Given positive integers $n,r,s$ such that $s \le \sqrt{r},$ we have 
$$g(n,r,s) \ge \frac12 \cdot \frac{s}{\sqrt{r}} \cdot n; $$
furthermore, if $s \ll \sqrt{r}$ then
$$g(n,r,s) \ge \left(1-o_r(1)\right)\cdot \frac{s}{\sqrt{r}} \cdot n.$$
\end{prop}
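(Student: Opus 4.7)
The plan is to apply \Cref{lem:lower-bound-g} with the choice $d = \lceil \sqrt{r}\,\rceil$. This is the value that balances the two expressions in the minimum, since they behave (up to lower order terms) like $sn/d$ and $sn(d+1)/r$ respectively, and these are equal when $d \approx \sqrt{r}$. I would first check the hypotheses $s \le d < r-s$: the first is immediate from $s \le \sqrt{r}$, and the second follows from $d \le \sqrt{r}+1 < r - \sqrt{r} \le r-s$ for $r$ above a small absolute constant (very small $r$ being handled by the trivial bound $g(n,r,s) \ge 2$ for $n \ge 2$, which already dominates the desired RHS in those cases).

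With this choice, the first term $1 + s(n-1)/d$ is at least $s(n-1)/(\sqrt{r}+1)$. A direct calculation, using $s \le \sqrt{r}$, shows this is at least $\tfrac12 \cdot sn/\sqrt{r}$; and when $s \ll \sqrt{r}$, the same quantity equals $(1-o_r(1))\,sn/\sqrt{r}$. For the second term, I would use the standard estimate
\[
\frac{\binom{r-d-1}{s}}{\binom{r}{s}} \;=\; \prod_{i=0}^{s-1}\frac{r-d-1-i}{r-i} \;\le\; \left(1-\frac{d+1}{r}\right)^{\!s} \;\le\; \exp\!\left(-\frac{s(d+1)}{r}\right),
\]
so the second term is at least $(1-e^{-x})\,n$ with $x := s(d+1)/r$. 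Splitting on the size of $x$: if $x \le 1$, then $1-e^{-x} \ge x/2 \ge s/(2\sqrt{r})$ (using $d+1 \ge \sqrt{r}$), while if $x > 1$, then $1 - e^{-x} > 1/2 \ge s/(2\sqrt{r})$ since $s \le \sqrt{r}$. In both cases the second term is $\ge sn/(2\sqrt{r})$, which together with the bound on the first term establishes the first assertion. For the sharper bound in the regime $s \ll \sqrt{r}$, we have $x = o_r(1)$, so Taylor expansion yields $1 - e^{-x} = (1-o(1))\,x = (1-o_r(1))\,s(d+1)/r = (1-o_r(1))\,s/\sqrt{r}$, upgrading the second term to match the first up to a $(1-o_r(1))$ factor.

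The proof is essentially a one-parameter optimization of \Cref{lem:lower-bound-g}, and I do not foresee any substantive obstacle. The only technicalities are the integer rounding of $d$ and the case split when lower-bounding $1 - e^{-x}$, neither of which affects the stated bounds.
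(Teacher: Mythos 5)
Your approach is the same as the paper's: apply \Cref{lem:lower-bound-g} with $d\approx\sqrt{r}$, observe the first term is trivially large, and estimate the second using $1-(1-(d+1)/r)^s$. (The paper uses $d=\lfloor\sqrt{r}\rfloor$ and a two-term binomial expansion rather than $1-e^{-x}$ with a case split, but these are equivalent in spirit.) There is, however, a small slip in your bound on the first term: after discarding the leading ``$1+$'' you assert that $s(n-1)/(\sqrt{r}+1)\ge \tfrac12\cdot sn/\sqrt{r}$, which is false for small $n$ (for example $n=2$, $r=4$ gives $s/3$ versus $s/2$), and the appeal to $s\le\sqrt{r}$ does not rescue it since $s$ cancels. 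The correct move --- and the one the paper makes --- is to first absorb the ``$1+$'' using $s\le d$, namely $1+\tfrac{s(n-1)}{d}\ge\tfrac{sn}{d}$, and only then bound $d\le\sqrt{r}+1\le 2\sqrt{r}$; with this small repair the rest of your argument goes through exactly as written.
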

\begin{proof}
Let us apply \Cref{lem:lower-bound-g} with $d=\floor{\sqrt{r}}\ge s$  this shows that either 
$g(n,r,s) \ge 1+\frac{s(n-1)}{\floor{\sqrt{r}}}\ge \frac{sn}{\floor{\sqrt{r}}}$ in which case we are done, or 
\begin{align*}
    \frac{g(n,r,s)}n&\ge 1-\frac{\binom{r-d-1}{s}}{\binom{r}{s}}\ge 1-\left(1-\frac{d+1}{r}\right)^s \ge \frac{s(d+1)}{r}-\binom{s}{2} \cdot\frac{(d+1)^2}{r^2}\\
    &=\frac{s(d+1)}{r} \cdot \left(1 - \frac{(s-1)(d+1)}{2r}\right)\ge \frac{s}{\sqrt{r}}\cdot \left(1 - \frac{sd}{2r}\right), 
\end{align*}
which in turn is at least $(1-o_r(1)) \cdot \frac{s}{\sqrt{r}}$ for $s \ll \sqrt{r},$ and at least $\frac12 \cdot \frac{s}{\sqrt{r}}$ for $s \le \sqrt{r}$, as desired.
\end{proof}



\subsection{Bounds when \texorpdfstring{$s$}{s} is large}\label{sec:g-large}
In this section, we show our bounds on $g(n,r,s)$ for $s \ge \sqrt{r}$. We begin with the lower bounds.

\begin{prop}\label{prop:g-s-big-lb}
    Let $s \le r$ be positive integers. Then, for any $n$ we have  
$$g(n,r,s) \ge n \cdot \left(1-\binom{r-s-1}{s}\Big/\binom{r}{s}\right)\ge n \cdot \left(1-e^{-\frac{s(s+1)}{r}}\right).$$
\end{prop}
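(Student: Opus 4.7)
The plan is to apply Lemma~\ref{lem:lower-bound-g} with the parameter $d := s$. With this choice, the first expression in the minimum becomes $1 + s(n-1)/s = n$, which is a trivial upper bound on $g(n,r,s)$, so the minimum is automatically attained by the second expression $\left(1 - \binom{r-s-1}{s}/\binom{r}{s}\right) n$. This directly delivers the first claimed inequality whenever the hypotheses of the lemma hold, that is, whenever $2s < r$. The remaining case $2s \ge r$ causes no trouble: then $r-s-1 < s$ forces $\binom{r-s-1}{s}=0$, so the asserted bound reduces to $g(n,r,s)\ge n$, and this is exactly what the proof of Lemma~\ref{lem:lower-bound-g} still outputs with $d=s$ (the paper explicitly flags in the remark following the lemma that the hypotheses $s\le d$ and $d<r-s$ are not really needed, since both terms in the minimum evaluate to $n$ at the endpoints and the argument goes through verbatim).

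For the second inequality I would expand the binomial ratio as a telescoping product and apply a standard exponential estimate:
\[
\frac{\binom{r-s-1}{s}}{\binom{r}{s}} \;=\; \prod_{i=0}^{s-1}\frac{r-s-1-i}{r-i} \;=\; \prod_{i=0}^{s-1}\left(1 - \frac{s+1}{r-i}\right) \;\le\; \exp\!\left(-\sum_{i=0}^{s-1}\frac{s+1}{r-i}\right) \;\le\; e^{-s(s+1)/r},
\]
where the first inequality is $1-x \le e^{-x}$ applied termwise, and the last uses the crude bound $\tfrac{1}{r-i} \ge \tfrac{1}{r}$ for $0 \le i \le s-1$.

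There is no genuine obstacle: the proposition is essentially a one-line corollary of the general lower-bound tool Lemma~\ref{lem:lower-bound-g} once one realises that $d=s$ is the distinguished choice at which the ``case 1'' term saturates at $n$ and therefore becomes irrelevant, leaving the ``case 2'' probabilistic term as the whole content of the bound. The only place that requires even a moment's thought is the boundary case $r\le 2s$, which is handled by the same remark that accompanies the lemma.
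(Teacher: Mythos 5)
Your proof is correct and follows essentially the same route as the paper: both apply Lemma~\ref{lem:lower-bound-g} with $d=s$, observe that the first term of the minimum saturates at $n$, and then bound the binomial ratio exponentially. The only cosmetic difference is that you estimate the ratio via the termwise telescoping product $\prod_{i=0}^{s-1}\bigl(1-\tfrac{s+1}{r-i}\bigr)$, whereas the paper passes through the cruder intermediate bound $\bigl(1-\tfrac{s+1}{r}\bigr)^s$; both are valid and yield the same final estimate.
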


\begin{proof}
    Let us apply \Cref{lem:lower-bound-g} with $d=s$. Since the first term is equal to $n$ and hence always at least as large as the second, we always have
    $$g(n,r,s) \ge  n-n \cdot \binom{r-s-1}{s}\Big/\binom{r}{s} \ge n -n \cdot \left(1-\frac{s+1}{r}\right)^s \ge n \cdot \left(1-e^{-\frac{s(s+1)}r}\right), $$
    as desired.
\end{proof}

The following result will be behind an almost matching upper bound we obtain for $s \ge \sqrt{r \log r}$. 

\begin{prop}\label{prop:g-big-s-ub}
Let $s,r$ be positive integers such that $\sqrt{r \log r} \le s \le \frac{r}{32}$. Then, there exists an $r$-vertex intersecting hypergraph of uniformity $8s$ and with cover number larger than $s$ with at most $e^{O(s^2/r)}$ edges.
\end{prop}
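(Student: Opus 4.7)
The plan is to give a direct probabilistic construction: sample $m$ independent, uniformly random $8s$-subsets $E_1,\dots,E_m$ of $[r]$, and show that for a suitable choice of $m \le e^{O(s^2/r)}$, the resulting hypergraph $H$ is both intersecting and has cover number larger than $s$ with positive probability. Two probability estimates drive the argument. For distinct $i,j$,
\[
\Pr[E_i\cap E_j=\emptyset]=\binom{r-8s}{8s}\Big/\binom{r}{8s}\le (1-8s/r)^{8s}\le e^{-64s^2/r},
\]
and for any fixed $s$-subset $S\subseteq[r]$, using $s\le r/32$ (so $r-8s\ge 3r/4$) together with $\ln(1-x)\ge -x-x^2$ for $x\le 1/2$ in a short calculation with $\prod_{i=0}^{8s-1}(1-s/(r-i))$, one gets
\[
\Pr[E_i\cap S=\emptyset]=\binom{r-s}{8s}\Big/\binom{r}{8s}\ge e^{-12s^2/r}.
\]

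I would then set $m=\bigl\lceil 2e^{12s^2/r}\ln\bigl(4\binom{r}{s}\bigr)\bigr\rceil$ and take a double union bound. Let $A$ count the disjoint pairs of sampled edges and $B$ count the $s$-subsets $S$ hit by every $E_i$. By the two estimates, $\E[A]\le \tfrac12 m^2 e^{-64s^2/r}$ and
\[
\E[B]\le \binom{r}{s}\bigl(1-e^{-12s^2/r}\bigr)^m\le \binom{r}{s}\exp\bigl(-m\,e^{-12s^2/r}\bigr)\le \tfrac14.
\]
The hypothesis $s\ge\sqrt{r\log r}$ gives $s^2/r\ge\log r\ge \log s+\log\log r$ for $r$ large, so $\log\binom{r}{s}\le s\log(er/s)\le e^{O(s^2/r)}$, which forces $m\le e^{O(s^2/r)}$. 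Substituting this into the bound on $\E[A]$ produces a polynomial factor in $r$ times $e^{-40s^2/r}\le r^{-\Omega(1)}$ in the regime $s^2\ge r\log r$, so $\E[A]+\E[B]<1$ and Markov yields positive probability that simultaneously $A=0$ and $B=0$, giving the desired hypergraph.

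The delicate point is the choice of uniformity $8s$. With uniformity $ks$, the disjointness probability between two sampled edges is $\approx e^{-k^2 s^2/r}$, while that between a sampled edge and a fixed $s$-subset is only $\approx e^{-ks^2/r}$; to afford $m\sim e^{\Theta(ks^2/r)}$ edges (needed to kill the cover-number bad events via union bound) while keeping the expected number of disjoint pairs small, one needs $2k<k^2$, i.e.\ $k>2$. Taking $k=8$ leaves a comfortable quantitative gap. The upper bound $s\le r/32$ is precisely what keeps the lower-bound estimate on $\Pr[E_i\cap S=\emptyset]$ clean, and $s\ge\sqrt{r\log r}$ is what lets the $\log\binom{r}{s}$ union-bound factor be absorbed into $e^{O(s^2/r)}$. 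I expect the main obstacle, if any, to be tracking these constants carefully to ensure the exponent really is $O(s^2/r)$ throughout the full stated range; extending below $\sqrt{r\log r}$ would likely require a Lovász Local Lemma refinement in place of the plain union bound.
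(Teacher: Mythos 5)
Your proposal is correct and follows essentially the same route as the paper: sample a random $8s$-uniform hypergraph on $[r]$ with $e^{O(s^2/r)}$ edges and union-bound the failure events for intersectingness and for cover number, with only cosmetic differences in the choice of $m$ (the paper uses $m=\lfloor e^{(8s)^2/(2r)}\rfloor=\lfloor e^{32s^2/r}\rfloor$) and in the exact constants in the two probability estimates. One small slip worth fixing: the intermediate claim $\log r\ge \log s+\log\log r$ need not hold in the upper part of the range $s\le r/32$; however, you do not actually need it, since $s^2/r\ge\log r$ alone already gives $\log\binom{r}{s}\le s\log(er/s)\le r\log(er)\le e^{O(s^2/r)}$, which is all that is used.
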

\begin{proof}
    Let $u:=8s$. Let $H$ be a random hypergraph obtained by picking $m=\floor{e^{u^2/(2r)}}$ random edges $e_1,\ldots, e_m \subseteq [r]$, each sampled uniformly at random among all subsets of $[r]$ of size $u$, independently from all other edges.

    First, observe that for any $1\le i,j \le m$ the probability that $e_i$ and $e_j$ do not intersect is at most 
    $$\binom{r-u}{u}/\binom{r}{u} = \frac{(r-u)\cdot \ldots \cdot (r-2u+1)}{r \cdot \ldots \cdot (r-u+1)}< \left(1-\frac{u}{r}\right)^u < e^{-u^2/r}.$$
    So by a union bound the probability that there exists a non-intersecting pair of edges is less than
    $$\binom{m}{2}\cdot e^{-u^2/r}\le \frac{m^2}{2}\cdot e^{-u^2/r} \le \frac12.$$ 

    On the other hand, given a fixed set of $r-s$ vertices, the probability that $e_i$ is contained in it equals 
    $$ \binom{r-s}{u}\Big/\binom{r}{u} \ge \left(1-\frac{s}{r-u+1}\right)^u \ge \left(1-\frac{4s}{3r}\right)^u \ge e^{-{3su}/{r}+1},$$
    where in the penultimate inequality we used $u = 8s \le \frac r4$ and in the final inequality we used that $1-x \ge e^{-2x}$ for any real $0\le x \le \frac12$ and that $\frac{su}{3r}\ge 1$.
    
    Hence, the probability that none of our random $m$ edges is contained in our set of size $r-s$ is at most  
    $$\left(1-e^{-{3su}/{r}+1}\right)^m \le \exp\left(-m\cdot e^{-{3su}/{r}+1} \right)\le \exp\left(-e^{{su}/{r}} \right).$$
    Since there are $\binom{r}{r-s}=\binom{r}{s}\le \frac{r^s}{s!}\le \frac{r^s}2$ subsets of size $r-s,$ another union bound implies that the probability that there exists a set of $r-s$ vertices not containing any edge is at most
    $$\frac{r^s}2\cdot \exp\left(-e^{{su}/{r}} \right)=\frac{1}2\cdot \exp\left(s \log r -e^{{su}/{r}} \right)\le \frac12,$$
    where the final inequality follows since $su/r=8s^2/r \ge 8 \log r \ge \log s + \log \log r$.

    A final union bound implies that there exists an outcome in which the hypergraph we obtain is intersecting and any subset of $r-s$ vertices contains an edge, in other words, there is no cover of size at most $s$, as desired.
\end{proof}

We note that \Cref{prop:g-big-s-ub} is tight up to the constant factor in the exponent as can be seen by an easy double-counting argument. Indeed, there needs to be at least $\binom{r}{r-s}$ pairs of a set of $r-s$ vertices containing an edge of our hypergraph and each edge can belong to at most $\binom{r-u}{r-s-u}$ sets. Therefore, our hypergraph needs to have at least $\binom{r}{r-s}/\binom{r-u}{r-s-u}= \binom{r}{s}/\binom{r-u}{s}\ge \left(1+\frac{u}{r-u}\right)^{s}\ge e^{\Omega(s^2/r)}$ edges.

\begin{cor}\label{cor:g-s-big-ub}
    Let $s,r$ be positive integers such that $\sqrt{r \log r} \le s \le \frac{r}{32}$. Then, there exists $C>0$ such that for any $n\ge e^{Cs^2/r}$ we have 
    $$g(n,r,s) \le \left(1-e^{-O(s^2/r)}\right)\cdot n.$$
\end{cor}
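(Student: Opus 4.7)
The plan is to combine \Cref{prop:g-big-s-ub} with \Cref{lem:upper-bounds} essentially mechanically, so the hard work has already been done in the previous proposition.

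First I would apply \Cref{prop:g-big-s-ub} to obtain an $r$-vertex intersecting hypergraph $H$, of uniformity $8s$, with cover number larger than $s$, and with $m := |E(H)| \le e^{O(s^2/r)}$. The key observation is then to translate the cover number condition into hypothesis \ref{itm:2} of \Cref{lem:upper-bounds}: saying that the cover number of $H$ exceeds $s$ means that for every set $S \subseteq [r]$ with $|S| = s$ there is at least one edge of $H$ disjoint from $S$, i.e.\ contained in $[r] \setminus S$. Equivalently, every $(r-s)$-subset of $[r]$ contains at least $t = 1$ edge of $H$. Together with the fact that $H$ is intersecting (hypothesis \ref{itm:1}), \Cref{lem:upper-bounds} directly yields
\[
g(n,r,s) \;\le\; n - \floor{\tfrac{n}{m}}.
\]

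Finally, I would choose the constant $C$ in the hypothesis $n \ge e^{Cs^2/r}$ large enough so that $n \ge 2m$ (this is possible since the $O(s^2/r)$ in $m \le e^{O(s^2/r)}$ is an absolute-constant multiple of $s^2/r$ coming from \Cref{prop:g-big-s-ub}). Then $\floor{n/m} \ge n/(2m) \ge \tfrac{1}{2} n \cdot e^{-O(s^2/r)} = n \cdot e^{-O(s^2/r)}$, and substituting into the displayed inequality above gives
\[
g(n,r,s) \;\le\; n\Bigl(1 - e^{-O(s^2/r)}\Bigr),
\]
as desired.

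There is no real obstacle here: the genuine content sits in \Cref{prop:g-big-s-ub} (the probabilistic construction of a small intersecting hypergraph of large cover number) and in \Cref{lem:upper-bounds} (the reduction to such hypergraphs). The only minor bookkeeping is to confirm that the implicit constant in the exponent of $m$ is absolute, so that we can pick $C$ accordingly and then safely absorb the floor into the $O(\cdot)$ in the exponent.
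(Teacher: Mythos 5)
Your proof is correct and follows essentially the same route as the paper: apply \Cref{prop:g-big-s-ub} to get the small intersecting hypergraph with cover number greater than $s$, translate that into hypothesis \ref{itm:2} of \Cref{lem:upper-bounds} with $t=1$, and then absorb the floor using the hypothesis on $n$. The only (correct) extra care you take is to spell out that $s^2/r \ge \log r$ in the stated range so that factors like $\tfrac12$ can be absorbed into the $e^{-O(s^2/r)}$ term.
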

\begin{proof}
\Cref{prop:g-big-s-ub} provides us with an $r$-vertex hypergraph with at most $e^{O(s^2/r)}$ edges which is intersecting and any set of $r-s$ vertices contains at least one edge. This allows us to apply \Cref{lem:upper-bounds} with $t=1$ to obtain
$$g(n,r,s) \le n -\floor{\frac{n}{e^{O(s^2/r)}}}\le \left(1-e^{-O(s^2/r)}\right)\cdot n,$$
as desired.    
\end{proof}

We note that by being a bit more careful in \Cref{prop:g-big-s-ub} one can find a hypergraph of uniformity $u=(2+o_r(1))s$ and ensure via Chernoff's inequality that every set of $r-s$ vertices contains not just one but an $e^{-(1+o_r(1))su/r}$ proportion of the edges of $H$. This allows one to improve the constant in the exponent of the lower order term in the above corollary to $2+o_r(1)$ which is tight up to this factor of about two, thanks to \Cref{prop:g-s-big-lb}. Let us also note that \Cref{cor:g-s-big-ub} is tight in terms of the bound on $n$, up to the constant $C$, since if $n < e^{\frac{s(s+1)}{r}}$ \Cref{prop:g-s-big-lb} implies that $g(n,r,s)=n$.

\textbf{Remark.} We believe \Cref{cor:g-s-big-ub} should hold also when $\sqrt{r}\le s \le \sqrt{r \log r}$. In this case, one can prove a weaker bound than the one in \Cref{cor:g-s-big-ub} which is still better than $g(n,r,s) \le (1-r^{-O(1)})n$ which one gets from monotonicity and \Cref{cor:g-s-big-ub} with $s=\sqrt{r\log r}$. Namely, by choosing $m=r^8$ random edges of size $u=4\sqrt{r \log r}$ we can guarantee the hypergraph is intersecting with probability more than a half and by using Chernoff we can also guarantee with probability at least a half that any set of $r-s$ vertices contains $e^{-O(su/r)}=e^{-O\left(s\sqrt{\frac{\log r}{r}}\right)}$ proportion of the edges. This shows via \Cref{lem:upper-bounds} that in this range $g(n,r,s) \le \left(1-e^{-O\left(s\sqrt{\frac{\log r}{r}}\right)}\right)n$.


The following proposition allows us to obtain an even more precise result when $s$ is close to $r/2$.

\begin{prop}\label{prop:g-almost-end-of-regime}
    Let $s,r$ be positive integers such that $r/2-o_r((r/\log r)^{1/3})\le  s < r/2-1$. Then, there exists an $r$-vertex intersecting hypergraph $H$ of uniformity $s+1$ such that every set of $r-s$ vertices of $H$ contains at least $\displaystyle (1-o_r(1))\frac{\binom{r-s-1}{s}}{\binom{r}{s}}\cdot |E(H)|$ edges.
\end{prop}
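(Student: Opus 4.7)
I would show existence of $H$ via the probabilistic method, taking $H$ to be the random $(s+1)$-uniform hypergraph on $[r]$ in which every $(s+1)$-subset is included independently with probability $p$, for a carefully chosen parameter $p=p(r,s)$.

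For the count property, a Chernoff-plus-union-bound argument should suffice. For each fixed $(r-s)$-set $T$, the count $X_T := |\{e \in E(H) : e \subseteq T\}|$ is binomial with mean $p \binom{r-s}{s+1}$, so a multiplicative Chernoff bound gives
\[
\Pr\!\left[X_T \le (1-\varepsilon)\, p \binom{r-s}{s+1}\right] \le \exp\!\left(-\Omega\!\left(\varepsilon^2 p \binom{r-s}{s+1}\right)\right).
\]
Provided $p \binom{r-s}{s+1} \gg \log \binom{r}{s}$, a union bound over the $\binom{r}{s}$ choices of $T$, combined with a similar Chernoff estimate for $|E(H)|$ itself, yields that with probability at least $3/4$ every $T$ satisfies $X_T \ge (1-o_r(1))\, q \cdot |E(H)|$, where $q = \binom{r-s-1}{s}/\binom{r}{s}$.

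The delicate step is to simultaneously force $H$ to be intersecting. For each disjoint pair $(e,e')$ of $(s+1)$-subsets of $[r]$ let $A_{e,e'}$ be the bad event that both $e,e' \in E(H)$, and write $N$ for the number of disjoint pairs. The naive approaches fail: a Markov- or FKG-style Janson bound forces $p \lesssim N^{-1/2}$, which is much too small to feed into the concentration step above; and the symmetric Lov\'asz Local Lemma applied directly to the $A_{e,e'}$ gives only $\Pr[\text{intersecting}] \ge \exp(-\Theta(2^r))$, which is also incompatible with the $\binom{r}{s}$-factor union-bound loss in the concentration step. The idea is a non-standard combination of Janson's inequality and the Local Lemma: the events $A_{e,e'}$ form a sparse dependency graph of maximum degree $O\!\left(\binom{r-s-1}{s+1}\right)$ (since $A_{e,e'}\sim A_{f,f'}$ only when $\{e,e'\}\cap\{f,f'\}\neq\emptyset$), and within each cluster of dependent bad events one uses Janson to obtain a sharp joint-tail estimate which is then lifted to a global lower bound on $\Pr[\text{intersecting}]$ via an LLL-type argument. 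The hypothesis $s \ge r/2 - o_r((r/\log r)^{1/3})$ is what makes the resulting parameter window nonempty.

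Combining the two high-probability events by a final union bound, one obtains a hypergraph $H$ satisfying both properties with positive probability, so such an $H$ exists. The main obstacle is the intersecting step, i.e.\ the Janson--LLL hybrid: neither of the two tools alone produces a lower bound on $\Pr[\text{intersecting}]$ strong enough to survive the union-bound loss in the concentration step, and the cube-root-scale condition on $r-2s$ in the hypothesis encodes precisely the range in which the two tools can be successfully balanced against each other.
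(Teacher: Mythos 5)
Your plan correctly identifies Janson's inequality and the Lov\'asz Local Lemma as the relevant tools, but it misallocates them and, more fundamentally, is built on a random model in which the intersecting property cannot be salvaged. In the plain binomial model (each $(s+1)$-set included independently with probability $p$), the expected number of disjoint pairs that land in $H$ is $\frac{p^2}{2}\binom{r}{s+1}\binom{r-s-1}{s+1}$, and for any $p$ large enough for the concentration step to have a chance this quantity is of order at least $\binom{r}{s+1}\big/\binom{r-s-1}{s+1}=2^{(1-o_r(1))r}$. The probability that $H$ is intersecting is therefore roughly $\exp\bigl(-2^{(1-o_r(1))r}\bigr)$. No hybrid of Janson and the LLL, however inventive, can certify a lower bound on a probability that exceeds its true value, so the ``Janson--LLL for intersecting'' step is a dead end, not a delicate balancing act. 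You correctly sense that the naive approaches fail, but the remedy is not a sharper analysis of the same model --- it is a different model.

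The paper's key idea, which is absent from your sketch, is to make $H$ intersecting \emph{by construction}: sample independent Bernoulli variables $z_X$ for $X\in\binom{[r]}{s+1}$ with a suitable probability $p$, and declare $X$ an edge of $H$ if and only if $z_X=1$ and $z_{X'}=0$ for every $X'\in\binom{[r]\setminus X}{s+1}$. Any two chosen edges then intersect automatically, and the entire probabilistic difficulty shifts to the concentration step. There the indicators $A_X$ are no longer independent, so Chernoff is unavailable; this is precisely where Janson's inequality enters, giving the lower-tail estimate for each $B_T$. Moreover, the effective edge probability in this model is capped at roughly $1/\binom{r-s-1}{s+1}$, which forces $\E[B_T]=\Theta(r/d)$ with $d=r/2-s-1$, so your proposed union bound over the $\binom{r}{s}=2^{(1-o_r(1))r}$ choices of $T$ is also too lossy. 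The paper instead observes that each $B_T$ depends on only $r^{O(d)}$ of the other $B_{T'}$ and applies the Local Lemma \emph{to the concentration events} $\{B_T<(1-\eps)\E[B_T]\}$, then compares the resulting lower bound on $\pr[\text{all good}]$ with an Azuma upper bound on $\pr[B>(1+\eps)\E[B]]$. So Janson and the LLL are indeed both present and their interplay is non-standard, but they act on the concentration, not on the intersecting property --- and the deterministic-intersecting construction is the ingredient that makes the whole argument possible.
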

\begin{proof}
    Let $\eps>0,$ and let us also for convenience set 
 $$x:=s+1 \hspace{1cm} \text{ and } \hspace{1cm} d:=r/2-x=r/2-s-1=o((r/\log r)^{1/3}).$$

Our general strategy is similar to the one we used in \Cref{prop:g-big-s-ub}. However, in order to obtain a result at our desired level of precision we cannot afford to simply sample every edge with an appropriate probability. What we do is for each $X\in\binom{[r]}{x}$, sample a Bernoulli random variable $z_X$ with probability $p=\frac1{\binom{r-x}{x}} \le \frac12,$ where the inequality holds since $r-x>x$. We now choose $X$ to be an edge if $z_X=1$ but $z_{X'}=0$ for all $X'\in\binom{[r]\backslash X}{x}$. Let $A_X$ be the indicator random variable for whether we selected $X$ as an edge, so 
$$A_X = z_X\cdot\prod_{X'\in\binom{[r]\backslash X}{x}}(1-z_{X'}).$$ 
Note first that $H$ is intersecting. Indeed, if $A_X=1$ for some $X\in\binom{[r]}{x},$ then $z_X=1$ and $z_{X'}=0$ for all $X' \in \binom{[r]\backslash X}{x},$ which in turn guarantees $A_{X'}=0,$ so $H$ contains no edge disjoint from $X$.

Let $B$ be the number of edges of $H$, so  $B=\sum_{X\in\binom{[r]}{x}}A_X.$  
Note that since for any $X\in \binom{[r]}{x}$, $\pr(z_X=1)=p$ and since the variables $z_X$ are independent we have
$$
\mathbb{P}(A_X=1) =p(1-p)^{\binom{r-x}{x}}\ge p \cdot e^{-2p\binom{r-x}{x}}=\frac p{e^2},
$$
where in the inequality we used that $1-p \ge e^{-2p}$ which holds since $p \le \frac12$.
This combined with the linearity of expectation implies
$$
\mathbb{E}[B] = \binom{r}{x} \cdot \mathbb{P}(A_X=1)\ge \binom{r}{x} \cdot \frac p{e^2}. 
$$

Our probability space is given by $\binom{r}{x}$ independent random variables, and changing one of them changes the value of $B$ by at most $\binom{r-x}{x}$. In other words, $B$ is $\binom{r-x}{x}$-Lipschitz, implying by Azuma's inequality (see e.g. \cite[Chapter 7]{alon-spencer}) that 
\begin{align}\label{eq:num-edges}
    \mathbb{P}\left(B>{(1+\eps)\mathbb{E}[B]}\right)&\leq \exp\left(-\frac{\eps^2\mathbb{E}[B]^2}{2\sum_{X\in\binom{[r]}{x}}\binom{r-x}{x}^2}\right) \le \exp\left(-\binom{r}{x}\cdot\frac{\eps^2p^4}{2e^4}\right)\le \exp\left(-\binom{r}{x}\cdot\Omega(r^{-8d})\right), 
\end{align}
where we used that $\frac1p = \binom{r-x}{x} = \binom{r-x}{r-2x}=\binom{r-x}{2d} \le r^{2d}$.

We next turn to the number of edges contained by vertex subsets of size $r-s$. 
For each $T\in\binom{[r]}{r-s}$ let $B_T$ count the number of edges of the subgraph of $H$ induced by $T$, so $B_T = \sum_{X\in\binom{T}{x}}A_X$.
Hence, for any $T$
\begin{align}\label{eq:expectation}
\mathbb{E}[B_T] = \binom{|T|}{x} \cdot \mathbb{P}(A_X=1)\ge \binom{r-s}{x} \cdot \frac p{e^2}= \frac {\binom{r-s}{x}}{e^2 \cdot \binom{r-s-1}{x}}=\frac 1{e^2} \cdot \frac{r-s}{r-s-x}> \frac 1{2e^2} \cdot \frac{r}{2d+1}.
\end{align}

Now, let us fix some $T\in \binom{[r]}{r-s}$. Note that the random variable $B_T$ is increasing in the product probability space given by random variables $z_X$ for $X\in\binom{T}{x}$ and $1-z_X$ for $X\in\binom{[r]}{x}\backslash\binom{T}{x}$  since $r-s<2x$ and therefore, for no $X\in T$ does $A_X$ depend on $z_{X'}$ for some $X'\in \binom{T}{x}\setminus\{X\}$. This will allow us to apply Janson's inequality (see \cite[Chapter 8]{alon-spencer}). Before applying it we first need to understand the dependency structure.

Given some $X,X'\in\binom{T}{x}$, $A_X$ and $A_{X'}$ are functions of disjoint sets of independent variables if and only if $r-|X\cup X'|< x$ since this is equivalent to there being no $X''\in\binom{[r]}{x}$ disjoint from both $X$ and $X'$ (and $r-s<2x$ guarantees $X$ and $X'$ are not disjoint). It follows that $A_X$ is not independent of $A_{X'}$ only if 
$$r-x\geq|X\cup X'|=|X|+|X'|-|X\cap X'| = 2x-|X\cap X'|.$$
This is equivalent to $|X\cap X'|\geq 3x-r=x-2d$. Therefore, for each $X\in \binom{T}{x}$, $A_X$ is independent of all but at most 
\begin{align*}
\sum_{i=x-2d}^{x}\binom{x}{i}\binom{r-s-x}{x-i}&=\sum_{i=x-2d}^{x}\binom{x}{x-i}\binom{2d+1}{x-i}\\ 
&= \sum_{i=0}^{2d}\binom{x}{i}\binom{2d+1}{i}\\
&\le (2d+1)\cdot \sum_{i=0}^{2d}\binom{x}{x-i}\binom{2d}{i}\\
&=(2d+1)\cdot  \binom{x+2d}{x}= (2d+1)\cdot  \binom{r-x}{x} =\frac{2d+1}{p},
\end{align*}
 events $A_{X'}$. The above allows us to calculate the following term, with the goal of applying the lower tail version of Janson's inequality \cite{janson}:
$$
\Delta(B_T) = \sum_{\substack{X,X'\in \binom{T}{x}\\ \text{s.t.\ $A_X\sim A_{X'}$}}}\mathbb{E}[A_XA_{X'}] \leq \mathbb{E}[B_T] + \binom{r-s}{x}\cdot \frac{2d+1}{p} \cdot \pr[A_X]\cdot p=(2d+2)\cdot \mathbb{E}[B_T],
$$
where in the inequality we used that $\pr(A_X \cap A_{X'}) \le \pr[A_X] \cdot p,$ which holds since for $A_{X'}$ to happen we need to have $z_{X'}=1,$ which happens with probability $p$, independently of $A_X$ since $r-s-x<x$.
We get 
\begin{equation*}
\frac{\mathbb{E}[B_T]^2}{\Delta(B_T)}\geq \frac{\mathbb{E}[B_T]}{2d+2} \ge \frac{r}{2e^2 (2d+1) (2d+2)},
\end{equation*}
where we used \eqref{eq:expectation} in the final inequality.
Finally, the lower tail version of Janson's inequality \cite{janson} tells us
\begin{equation}\label{eq:exp-b-t}
\mathbb{P}\left [B_T\leq {(1-\eps)\mathbb{E}[B_T]}\right ]\leq \exp\left(-\frac{\eps^2\mathbb{E}[B_T]^2}{2\Delta(B_T)}\right) \le \exp\left(-\Omega\left(\frac r{d^2}\right)\right).
\end{equation}

The last step is to apply the quantitative version of the Lov\'asz Local Lemma (see \cite[Chapter 5]{alon-spencer}) with ``bad events'' being $B_T< (1-\eps){\mathbb{E}[B_T]}$ for $T \in \binom{[r]}{r-s}$ to show that all these events can be avoided with probability which is greater than the probability that $B>(1+\eps)\mathbb{E}[B]$ estimated in \eqref{eq:num-edges}.
Towards this, we first need to estimate, given $T\in \binom{[r]}{r-s}$,  
for how many $T'\in \binom{[r]}{r-s}$ do $B_T$ and $B_{T'}$ both depend on the same $z_X$ for some $X\in \binom{[r]}{x}$, since $B_T$ is independent of all other $B_{T'}$. Towards this, note that $B_T=\sum_{X \in \binom{T}{x}} A_X,$ and that each $A_X$ only depends on $z_{X}$ and $z_{X'}$ for $X'$ disjoint from $X$. This means that $B_T$ depends only on $z_X$ when $X \subseteq T$ or $|T \setminus X|\ge x$ (as otherwise $X$ is not disjoint from any subset of $T$ of size $x$). To summarise, $B_T$ (and similarly $B_{T'}$) depends on $z_X$ if and only if one of the following holds:
\begin{itemize}
    \item $X\subseteq T$ or
    \item $|T\backslash X|\geq x,$ or equivalently $|T\cap X|\leq r-s-x=2d+1$.
\end{itemize}
It follows that there exists a $z_X$ such that both $B_T$ and $B_{T'}$ depend on it only if one of the following holds:
\begin{enumerate}
    \item \label{itmm:1} $|T\cap T'| \geq x$ \hspace{1.6cm} (there exists $X\in \binom{[r]}{x}$ such that $X\subseteq T \cap T'$);
    \item \label{itmm:2} $|T\cap T'| \leq  4d+2$ \hspace{0.83cm} (there exists $X$ with $|X\cap T| \leq 2d+1$ and $X\subseteq  T'$ or vice versa);
    \item \label{itmm:3} $|T\cap T'| \geq r/2-3d$ \hspace{0.48cm} (there exists $X$ with $|X\cap T|,|X\cap T'| \leq 2d+1$);
\end{enumerate}
where \cref{itmm:2} follows since $|X \cap T| \le 2d+1$ and $X \subseteq T'$ imply $x-(2d+1) \le |X \cap (T'\setminus T)| \le r-s-|T \cap T'|$. 
Similarly \cref{itmm:3} follows since $|X\cap T|,|X\cap T'| \leq 2d+1$ imply that 
$x \le r-|T \cup T'|+|X \cap T| + |X \cap T'|$ which is equivalent to $r/2-3d \le |T \cap T'|$.

If we fix $T$ of size $r-s,$ then there are 
$$\binom{r-s}{i} \binom{s}{r-s-i}=\binom{r-s}{i} \binom{s}{2s-r+i}=\binom{r-s}{i} \binom{s}{i-2d-2}$$ 
choices for $T'$ of size $r-s$ such that $|T \cap T'|=i$. 

Since $x=r/2-d$ \cref{itmm:1} is implied by \cref{itmm:3} so $B_T$ is independent of all but at most 
\begin{align*}
\sum_{i=2d+2}^{4d+2} \binom{r-s}{i} \binom{s}{i-2d-2}+\sum_{i=r/2-3d}^{r-s} \binom{r-s}{r-s-i} \binom{s}{r-s-i} &\le \\ (2d+1)\binom{r-s}{4d+2} \binom{s}{2d}+\sum_{i=0}^{4d+2} \binom{r-s}{i} \binom{s}{i}
&\le \\ 2(4d+2) \binom{r-s}{4d+2} \binom{s}{4d+2}&\le r^{8d+4}
\end{align*}
 $B_T'$. Note next that since $d^3 = o(r/\log r)$, we get from \eqref{eq:exp-b-t}
$$
\mathbb{P}[B_T\leq (1-\eps)\mathbb{E}[B_T]]\leq \exp\left(-\Omega\left(\frac r{d^2}\right)\right) = r^{-\omega(d)}\le r^{-8d-4}(1-r^{-8d-4})^{r^{8d+4}},
$$
allowing us to apply the Lov\'asz Local Lemma to conclude
\begin{align*}
\mathbb{P}\Bigg[\bigcap_{T\in\binom{[r]}{r-s}} B_T> (1-\eps)\mathbb{E}[B_T]\Bigg] &\ge (1-r^{-8d-4})^{\binom{r}{r-s}}\ge  \exp\left(-2\binom{r}{r-s}/r^{8d+4}\right)>  \exp\left(-\binom{r}{x}\cdot\Omega(r^{-8d})\right)\\&\ge \mathbb{P}\left(B>{(1+\eps)\mathbb{E}[B]}\right).
\end{align*}
This implies that there exists an outcome in which for each $T \in \binom{[r]}{r-s}$ we have $B_T > (1-\eps) \E[B_T],$ and, $B \le (1+\eps)\E[B]$. In other words, there exists an $(s+1)$-uniform intersecting hypergraph with $r$ vertices such that any $r-s$ vertices contain at least $(1-\eps) \E[B_T]$ edges, while the whole hypergraph has at most $(1+\eps)\E[B]$ edges. 
The conclusion follows from $$\frac{(1-\eps)\E[B_T]}{(1+\eps)\E[B]}\geq (1-\eps)^2 {\frac{\binom{r-s}{x}}{\binom{r}{x}}}=(1-\eps)^2\frac{\binom{r-s-1}{s}}{\binom{r}{s}},$$ and letting $\eps \to 0$.
\end{proof}
\begin{cor}\label{cor:g-almost-end-of-regime}
    Let $s,r$ be positive integers such that $r/2-o_r((r/\log r)^{1/3})\le  s < r/2-1$. Then for large enough $n$, $$g(n,r,s)\leq n\left(1-(1-o_r(1))\cdot\binom{r-s-1}{s}\Big/\binom{r}{s}\right).$$
\end{cor}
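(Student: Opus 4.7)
The plan is to deduce this corollary as a direct consequence of the previous proposition (\Cref{prop:g-almost-end-of-regime}) fed into the general upper-bound reduction (\Cref{lem:upper-bounds}). Conceptually nothing new is needed beyond bookkeeping, since \Cref{prop:g-almost-end-of-regime} has already delivered a hypergraph with exactly the structure that \Cref{lem:upper-bounds} is designed to exploit; this corollary is simply cashing in that structural result against the colouring problem.

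Concretely, I would take the hypergraph $H$ on $[r]$ produced by \Cref{prop:g-almost-end-of-regime}, set $m := |E(H)|$ and $t := (1-o_r(1))\,\binom{r-s-1}{s}/\binom{r}{s}\cdot m$, and verify that $H$ satisfies the hypotheses of \Cref{lem:upper-bounds} with this value of $t$: it is intersecting, and every $(r-s)$-subset of $V(H)$ contains at least $t$ edges. Applying \Cref{lem:upper-bounds} then yields
\[
g(n,r,s) \;\le\; n - t\cdot \floor{\tfrac{n}{m}}.
\]

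The only remaining step is to argue that the floor costs essentially nothing when $n$ is large enough. Since $m$ depends only on $r$ (not on $n$), for $n \gg m$ we have $\floor{n/m} \ge (1-o_r(1))\,n/m$, so
\[
t\cdot \floor{\tfrac{n}{m}} \;\ge\; (1-o_r(1))\,\frac{\binom{r-s-1}{s}}{\binom{r}{s}}\cdot n,
\]
and absorbing the two $(1-o_r(1))$ factors into a single one gives the claimed bound. There is no real obstacle here: the only mild subtlety is checking that the $o_r(1)$ loss from rounding is genuinely controlled by taking $n$ sufficiently large in terms of $r$, which is exactly the hypothesis of the corollary. The substantive work was already carried out in \Cref{prop:g-almost-end-of-regime}; this corollary is a short wrapper around it.
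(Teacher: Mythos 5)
Your proposal matches the paper's own proof: take the hypergraph from Proposition \ref{prop:g-almost-end-of-regime}, feed it to Lemma \ref{lem:upper-bounds} with $t=(1-o_r(1))\binom{r-s-1}{s}/\binom{r}{s}\cdot|E(H)|$, and absorb the rounding loss from the floor by taking $n$ large relative to $r$ (hence to $|E(H)|$). Nothing is missing; this is exactly the paper's argument.
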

\begin{proof}
\Cref{prop:g-almost-end-of-regime} provides us with an $r$-vertex intersecting hypergraph $H$ such that any set of $r-s$ vertices contains at least $\displaystyle (1-o_r(1))\frac{\binom{r-s-1}{s}}{\binom{r}{s}}\cdot |E(H)|$ edges. \Cref{lem:upper-bounds} applied with $\displaystyle t=(1-o_r(1))\frac{\binom{r-s-1}{s}}{\binom{r}{s}}$ now implies 
$$g(n,r,s)\le n- (1-o_r(1)) \frac{\binom{r-s-1}{s}}{\binom{r}{s}} \cdot |E(H)| \cdot \floor{\frac{n}{|E(H)|}}\le n- (1-o_r(1)) n  \cdot \frac{\binom{r-s-1}{s}}{\binom{r}{s}},$$
where we used the assumption that $n$ is sufficiently large compared to $r$ (and hence also $|E(H)|$). 
\end{proof}
Finally, for the very end of the range we obtain the following simple, precise result.
\begin{prop}\label{g-end-of-regime}
For any odd $ r \ge 3$ we have
$$g(n,r,\floor{r/2})= \ceil{\left(1-\frac{1}{\binom{r}{\floor{r/2}}}\right)n}.$$
\end{prop}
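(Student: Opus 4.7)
The plan is to prove matching upper and lower bounds using the two general tools, Lemma on lower bounds and the upper bound lemma from the beginning of Section 2. Write $s=\lfloor r/2\rfloor=(r-1)/2$, so that $r=2s+1$ and in particular $r-s=s+1$ and $r-s-1=s$.

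For the lower bound, the plan is to apply the lower bound lemma with $d=s$. Since $d=s$, the first term in the minimum becomes $1+\frac{s(n-1)}{s}=n$, so it does not constrain anything. The second term is $\bigl(1-\binom{r-s-1}{s}/\binom{r}{s}\bigr)n=\bigl(1-\binom{s}{s}/\binom{r}{s}\bigr)n=\bigl(1-1/\binom{r}{s}\bigr)n$, exactly the target value. Since $g(n,r,s)$ is an integer, this immediately upgrades to $g(n,r,s)\ge \lceil (1-1/\binom{r}{s})n\rceil$.

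For the upper bound, I would apply the upper bound lemma with $H$ being the complete $(s+1)$-uniform hypergraph on $[r]$, i.e.\ $E(H)=\binom{[r]}{s+1}$. Two $(s+1)$-subsets of a ground set of size $r=2s+1$ cannot be disjoint by pigeonhole, so $H$ is intersecting, verifying property (1). Any vertex subset of size $r-s=s+1$ contains exactly one edge of $H$, namely itself, so property (2) holds with $t=1$. The lemma then yields
\[
g(n,r,s)\le n-\Bigl\lfloor\frac{n}{|E(H)|}\Bigr\rfloor = n-\Bigl\lfloor\frac{n}{\binom{r}{s+1}}\Bigr\rfloor = n-\Bigl\lfloor\frac{n}{\binom{r}{s}}\Bigr\rfloor,
\]
where in the last step I used $\binom{2s+1}{s+1}=\binom{2s+1}{s}=\binom{r}{s}$. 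A quick check shows that for any real $\alpha\ge 0$ and integer $n$, $n-\lfloor \alpha n\rfloor=\lceil n(1-\alpha)\rceil$, so the right-hand side equals $\lceil (1-1/\binom{r}{s})n\rceil$, matching the lower bound.

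There is essentially no obstacle here: the result is a clean corollary of the two tools proved at the start of the section, with the complete $(s+1)$-uniform hypergraph on $2s+1$ vertices serving as the extremal configuration on both sides. The only mildly subtle point is matching the ceiling expression with the floor expression produced by the upper bound lemma, which is the standard identity $n-\lfloor n/m\rfloor=\lceil n(1-1/m)\rceil$ for integer $n$ and positive integer $m$.
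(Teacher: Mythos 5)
Your proof is correct and takes essentially the same approach as the paper: Lemma~\ref{lem:upper-bounds} applied to the complete $(s+1)$-uniform hypergraph on $2s+1$ vertices for the upper bound, and Lemma~\ref{lem:lower-bound-g} with $d=s$ plus integrality for the lower bound. The only difference is that you spell out the identity $n-\lfloor n/m\rfloor=\lceil n(1-1/m)\rceil$, which the paper leaves implicit.
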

\begin{proof}
Let $s=\floor{r/2}$. For the upper bound, we use \Cref{lem:upper-bounds} with the auxiliary hypergraph being the complete $(s+1)$-uniform hypergraph on $2s+1$ vertices.

For the lower bound, we use \Cref{lem:lower-bound-g} with $d=s,$ which shows $$g(n,2s+1,s)\ge {\left(1-\frac{1}{\binom{2s+1}{s}}\right)n},$$ and since $g(n,2s+1,s)$ is always an integer we may put a ceiling function here.
\end{proof}

\subsection{Precise results for small values of \texorpdfstring{$r$}{r}}\label{sec:small-precise-g}
Given the interest various reformulations of the question of determining $g(n,r,1)$ attracted for the small values of $r$ we collect here what our various results imply for several small values of $r$. We note that for $r \le 4$ the behaviour is completely determined by the known results for $g(n,r,1)$ and the observation that $g(n,r,s)=n$ whenever $s \ge \frac r2$. Hence, in the following theorem we solve the next three cases, namely for $5\le r \le 7$ given certain divisibility conditions on $n$. For $s=1$ it is known and not too hard to remove the divisibility conditions although including it in the following statement would make the statement somewhat unwieldy. We suspect it would not be too hard to figure out the precise answers regardless of the divisibility conditions on $n$ in the $s=2$ cases below, although we do not do this here. Let us also note that the below results always serve as lower bounds regardless of the divisibility conditions.

\begin{thm} We have

\resizebox{\textwidth}{!}{
\begin{tabular}{ll|ll|ll}
$g(n,5,1)=\frac59n$                &\quad if $9 \mid n$,\quad\quad  &\quad $g(n,6,1)=\frac12n$               &\quad if $4 \mid n$, \quad\quad  &\quad $g(n,7,1)=\frac37n$               &if $7 \mid n$,\\[0.2cm]
$g(n,5,2)=\ceil{\frac9{10}n}$,     &\quad
&\quad $g(n,6,2)= \frac45n$              &\quad if  $10\mid n$,
&\quad$g(n,7,2)=\frac57n$               &if  $7 \mid n$,\\[0.2cm] 
$g(n,5,s)=n$                       &\quad $\forall s \ge 3$,
&\quad $g(n,6,s)=n$                      &\quad $\forall s \ge 3$,
&\quad$g(n,7,3)=\ceil{\frac{34}{35}n}$  &\quad \\[0.2cm]
&&&&\quad$g(n,7,s)=n$                  &$\forall s > 3$.    
\end{tabular}
}
\end{thm}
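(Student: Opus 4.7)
The plan is to go through each entry combining the general tools of this section---\Cref{lem:lower-bound-g}, \Cref{lem:upper-bounds}, \Cref{prop:g-upper-bound}, and \Cref{g-end-of-regime}---with a single classical input for the $(r, s) = (5, 1)$ lower bound.

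First I would dispatch the easy cases. Each of the ``$g = n$'' entries has $2s \ge r$ (namely $r = 5, s \ge 3$; $r = 6, s \ge 3$; $r = 7, s \ge 4$), which is enough to run the Erd\H{o}s--Rado style reduction indicated at the end of the introduction: a pair of $s$-sets $S, S'$ with $S \cup S' = [r]$ exists, and if they both missed some vertex then we would get $e(v) \cap e(v') \subseteq [r] \setminus (S \cup S') = \emptyset$, violating intersectingness of the family $\{e(v)\}$ of colours incident to each vertex. The entries $g(n, 5, 2) = \lceil 9n/10 \rceil$ and $g(n, 7, 3) = \lceil 34n/35 \rceil$ are then immediate from \Cref{g-end-of-regime}.

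Next, I would handle the $r = 7$ and $r = 6$ columns via our framework. For $(r, s) = (7, 1), (7, 2)$ with $7 \mid n$, the plan is to pair \Cref{prop:g-upper-bound} with $p = 2$ (the Fano plane) for the upper bounds $3n/7$ and $5n/7$, with \Cref{lem:lower-bound-g} at $d = 2$, which one checks evaluates to exactly $3n/7$ and $5n/7$ respectively (the right-hand term in the minimum wins in both cases). For $(r, s) = (6, 1)$ with $4 \mid n$, the lower bound $n/2$ is again \Cref{lem:lower-bound-g} at $d = 2$, and the upper bound is \Cref{lem:upper-bounds} fed the intersecting $3$-uniform hypergraph $\{\{1,2,3\}, \{1,4,5\}, \{2,4,6\}, \{3,5,6\}\}$ on $[6]$, in which each vertex lies in exactly two edges, so every $5$-subset contains $t = 2$ edges. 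For $(r, s) = (6, 2)$ with $10 \mid n$, the lower bound $4n/5$ is once more \Cref{lem:lower-bound-g} at $d = 2$; for the upper bound one needs an \emph{intersecting} $2$-$(6, 3, 2)$ design (a standard degree count shows any such design has every $4$-subset containing exactly two blocks, whereupon \Cref{lem:upper-bounds} with $|E(H)| = 10, t = 2$ gives $4n/5$). I would realise such a design cyclically by developing the base blocks $\{\infty, 0, 1\}$ and $\{0, 1, 3\}$ on $\{\infty\} \cup \mathbb{Z}_5$ modulo $5$; intersectingness reduces to noting that blocks containing $\infty$ share $\infty$, that the five orbit blocks of $\{0, 1, 3\}$ pairwise meet inside $\mathbb{Z}_5$, and that each of them meets the pair $\{0, 1\}$ sitting inside every $\infty$-block.

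The main obstacle is the remaining entry $g(n, 5, 1) = 5n/9$ under $9 \mid n$. For the upper bound I plan to exhibit an explicit cover of $K_9$ by five $5$-cliques (for instance $\{1,2,3,4,7\}, \{1,2,3,5,8\}, \{1,2,3,6,9\}, \{4,5,6,7,8\}, \{4,5,7,8,9\}$) and blow it up; the corresponding multi-hypergraph on $[5]$ has nine edges with each vertex of degree exactly $5$, so \Cref{lem:upper-bounds} with $|E(H)| = 9, t = 9 - 5 = 4$ delivers $n - 4n/9 = 5n/9$. The matching lower bound is not reachable from \Cref{lem:lower-bound-g}, which only ever yields $n/2$ here, and closing the $n/18$ gap does not appear to follow from the tools developed in this section. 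The plan is to simply invoke the corresponding classical clique-covering bound of Mills~\cite{mills-covering}, which is directly the $s = 1$ reformulation of $g(n, 5, 1)$.
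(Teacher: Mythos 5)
Your proposal is correct and tracks the paper's proof almost exactly: the lower bounds for the $r=6,7$ entries come from \Cref{lem:lower-bound-g} with $d=2$, the upper bounds from \Cref{lem:upper-bounds} via explicit intersecting hypergraphs, \Cref{prop:g-upper-bound} with $p=2$ for $r=7$, \Cref{g-end-of-regime} for the two ceiling entries, and an external citation for the $g(n,5,1)$ lower bound (the paper points to~\cite{shiloach-covering}, the more directly ``inverse''-phrased reference, rather than Mills, but both are reasonable). Your concrete hypergraphs for $g(n,5,1)$ and $g(n,6,2)$ differ from the paper's — the paper uses the degree-regular multi-hypergraph $\{145,145,145,234,234,235,235,12,13\}$ and an explicit $2$-$(6,3,2)$ edge list rather than a cyclic development — but both verify the same degree/intersection conditions and feed into \Cref{lem:upper-bounds} identically. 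One small slip in wording: in checking your $r=6$ design is intersecting, the pair sitting inside the $\infty$-block $\{\infty,i,i+1\}$ is $\{i,i+1\}$, not $\{0,1\}$; the correct statement is that by cyclic invariance each consecutive pair $\{i,i+1\}$ meets every translate of $\{0,1,3\}$, which is what you need and is true.
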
  

\begin{proof}
    Curiously $g(n,5,1)$ is the only result on the above list for which the lower bound does not follow from \Cref{lem:lower-bound-g} directly, so we point the reader to e.g.\ \cite{shiloach-covering}. For the upper bound one may take the (multi) hypergraph with vertex set $[5]$ and edge (multi) set $\{145,145,145,234,234,235,235,12,13\}$, this is an intersecting hypergraph in which any four vertices contain at least $4$ edges, implying via \Cref{lem:upper-bounds} that $g(n,5,1) \le n- 4 \cdot \floor{\frac n9}= \frac 59 n$ if $9 \mid n$.

    The lower bounds for $g(n,6,1), g(n,6,2), g(n,7,1)$ and $g(n,7,2)$ all follow from \Cref{lem:lower-bound-g} by choosing $d=2$. The upper bound on $g(n,6,1)$ follows by taking the Fano plane and removing a vertex to obtain a six-vertex hypergraph with four edges in which every vertex has degree two so any five vertices contain at least two edges. The upper bound on $g(n,6,2)$ follows by applying \Cref{lem:upper-bounds} to the hypergraph with vertex set $[6]$ and edge set $\{123,124,346,345,256,135,245,236,146,156\}$ which has the property that any four vertices contain at least two edges. The upper bounds on $g(n,7,1)$ and $g(n,7,2)$ follow from \Cref{prop:g-upper-bound}. The results for $g(n,5,2)$ and $g(n,7,3)$ follow from \Cref{g-end-of-regime}.
\end{proof}

\section{Power of many colours for connected components}\label{sec:f}
In this section, we turn to our results on the large connected component using only a few colours question, namely determining $f(n,r,s)$. 

\subsection{Behaviour for very small \texorpdfstring{$s$}{s}}
We start with the following simple upper bound from  \cite{LMP-multicolour}. We include its short proof here for completeness and since it motivates the argument behind the subsequent lower bound.

\begin{prop}[\cite{LMP-multicolour}]\label{prop:f-uppr-bound-small-s}
    Suppose $r =2^d-1$ and $1\le s \le d$ then we have 
    $f(n,r,s) \le 2^s \cdot \ceil{\frac{n}{r+1}}.$
\end{prop}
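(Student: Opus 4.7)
The bound cries out for a Cayley-type construction exploiting the algebraic structure of the abelian group $\mathbb{F}_2^d$, which has exactly $r+1 = 2^d$ elements and $r = 2^d - 1$ nonzero ones. The plan is to use the nonzero elements of $\mathbb{F}_2^d$ as colours and encode the partition structure so that connectivity on $s$ colours is controlled by the linear span.

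Concretely, first partition $V(K_n)$ into $r+1$ parts $\{V_x\}_{x \in \mathbb{F}_2^d}$, each of size at most $\lceil n/(r+1)\rceil$, which is possible since $(r+1)\cdot\lceil n/(r+1)\rceil \ge n$. Identify the colour set with $\mathbb{F}_2^d \setminus \{0\}$, which has exactly $r$ elements. For every edge $uv$ with $u \in V_x$, $v \in V_y$, and $x \ne y$, colour $uv$ with the colour $x \oplus y$; for edges inside a single part $V_x$, colour arbitrarily (say, all with a single fixed colour). This is a valid $r$-edge colouring of $K_n$.

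Now fix any set $S$ of $s$ colours, i.e.\ a set $S \subseteq \mathbb{F}_2^d \setminus \{0\}$ with $|S|=s$, and consider the subgraph $G_S$ of $K_n$ formed by edges whose colour lies in $S$. The key observation is that two parts $V_x$ and $V_y$ can only lie in the same component of $G_S$ if $x \oplus y$ belongs to $W := \langle S\rangle$, the $\mathbb{F}_2$-subspace generated by $S$, since any path between $V_x$ and $V_y$ using colours in $S$ telescopes to a sum of elements of $S$ and hence lies in $W$. Conversely, writing any $x \oplus y \in W$ as $s_{i_1} \oplus \cdots \oplus s_{i_t}$ with $s_{i_j} \in S$ yields a short path of parts connecting $V_x$ to $V_y$ via edges in $S$. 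Therefore the components of $G_S$ correspond exactly to cosets of $W$ in $\mathbb{F}_2^d$, and each coset has cardinality $|W| \le 2^{|S|} = 2^s$.

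Finally, each part contains at most $\lceil n/(r+1)\rceil$ vertices of $K_n$, so the union of at most $2^s$ parts forming a single coset has at most $2^s \cdot \lceil n/(r+1)\rceil$ vertices, giving the desired bound on $f(n,r,s)$. Internal edges inside each $V_x$ are irrelevant to this count: they stay within a single part, which is already contained in the coset of $W$ containing $x$, so they cannot enlarge any component. There is essentially no obstacle here beyond the idea of using the $\mathbb{F}_2^d$ structure — the only mild point to keep track of is that the bound $|\langle S\rangle| \le 2^s$ uses nothing more than the fact that $s$ elements span a subspace of dimension at most $s$.
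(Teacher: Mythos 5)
Your proof is correct and follows essentially the same approach as the paper: partition $V(K_n)$ into $r+1$ parts identified with $\mathbb{F}_2^d$, colour cross-part edges by the XOR of the part labels, and observe that any $s$ colours span a subspace of size at most $2^s$ whose cosets bound the components. The only cosmetic difference is that the paper colours edges inside a part by a colour already used on edges leaving that part, whereas you use a single fixed colour; both are valid since, as you note, internal edges never merge distinct parts and hence never leave a single coset.
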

\begin{proof}
    Let $V=\mathbb{Z}_2^d$, so $|V|=r+1$. Let us consider the complete graph with vertex set $V$ and let us colour the edge between distinct $\textbf{x},\textbf{y} \in \mathbb{Z}_2^d$ in colour identified with $\textbf{x}-\textbf{y}$. This gives a colouring of an $(r+1)$-vertex complete graph using $r$ colours. 
    
    The key property of this colouring is that in any $s$ colours the graph consisting only of edges using these $s$ colours is a vertex disjoint union of connected components, each consisting of at most $2^s$ vertices. Indeed, any set of $s$ colours corresponds to a subset $S \subseteq \mathbb{Z}_2^d$ of size $s$ and the components correspond to cosets of $\text{Span}(S)$ which partition the vertices and have size at most $2^s$ as claimed.
    
    Now let us consider a complete graph $K_n$ and partition its vertices into $r$ parts of sizes as equal as possible, so each of size $\floor{\frac{n}{r+1}}$ or $\ceil{\frac{n}{r+1}}.$ We then identify the parts with $V$ and colour the edges between parts according to the colouring described above. We colour the edges inside the parts in an arbitrary colour used between it and some other part. In this colouring for any $s$ colours at most $2^s$ parts become connected showing that the maximum size of a connected component in the graph consisting only of edges using these $s$ colours is at most $2^s \cdot \ceil{\frac{n}{r+1}}$, as claimed.
\end{proof}

We next show that the upper bound in the above proposition is tight up to a constant factor for very small values of $s$. We will prove it in the full generality of the question of Bollob\'as, so instead of finding just a large connected component using few colours, we find a highly connected subgraph using edges of only a few colours. To help us find such highly connected subgraphs we are going to use the following classical theorem of Mader, see \cite[Theorem 1.4.3]{diestel}.

\begin{thm}[Mader 1972]\label{mader}
    For every integer $k\ge 1,$ any graph $G$ with $d(G)\ge 4k$ has a $(k+1)$-connected subgraph $H$ with $d(H)>d(G)-2k$.
\end{thm}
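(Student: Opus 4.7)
The approach I would take is the classical minimal dense subgraph argument. Set $\gamma := d(G)/2 \ge 2k$, so $\gamma = |E(G)|/|V(G)|$, and consider the family $\mathcal{F}$ of subgraphs $H' \subseteq G$ satisfying $|V(H')| \ge 2k$ and $|E(H')| > \gamma(|V(H')| - k)$. Since $G$ itself lies in $\mathcal{F}$, the family is non-empty; pick $H \in \mathcal{F}$ of minimum order, which I claim is the desired subgraph.

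I would first establish three preliminary facts. (i) $|V(H)| > 2k$, since otherwise the density condition would force $|E(H)| > \gamma k \ge 2k^2$, contradicting $|E(H)| \le \binom{2k}{2} < 2k^2$. (ii) $\delta(H) > \gamma - k$: otherwise a vertex $v$ with $d_H(v) \le \gamma - k$ could be deleted, with $H - v$ still lying in $\mathcal{F}$ (the density condition is preserved, as $|E(H - v)| \ge |E(H)| - (\gamma - k) > \gamma(|V(H - v)| - k) + k$, and the size condition holds by (i)), contradicting minimality. (iii) Combining the density condition with the trivial bound $|E(H)| \le \binom{|V(H)|}{2}$ gives a quadratic inequality $|V(H)|^2 - (1 + 2\gamma)|V(H)| + 2\gamma k > 0$; together with (i), this forces $|V(H)|$ to exceed the larger root of the quadratic, which is strictly greater than $\gamma$. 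In particular $|V(H)| > \gamma$, and so $d(H)/2 = |E(H)|/|V(H)| > \gamma - \gamma k/|V(H)| > \gamma - k$, giving the claimed $d(H) > d(G) - 2k$.

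The main task is then proving $(k+1)$-connectivity of $H$. Suppose for contradiction that some separator $S$ of $H$ has $|S| \le k$, giving non-empty sides $V_1, V_2$ with no $V_1$-$V_2$ edges. Set $H_i := H[S \cup V_i]$, note $|V(H_i)| < |V(H)|$, and use the edge identity $|E(H_1)| + |E(H_2)| = |E(H)| + |E(H[S])|$. The plan is to show $|E(H_i)| \le \gamma(|V(H_i)| - k)$ for both $i$: summing would then give $|E(H)| + |E(H[S])| \le \gamma(|V(H)| + |S| - 2k) \le \gamma(|V(H)| - k)$, contradicting the density of $H$. When $|V(H_i)| \ge 2k$ this bound follows from the minimality of $H$. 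When $|V(H_i)| < 2k$, one instead uses $|E(H_i)| \le \binom{|V(H_i)|}{2}$ together with (ii), which forces $|V(H_i)| \ge \delta(H) + 1 > \gamma - k + 1$, and verifies the bound by direct calculation.

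The hardest part will be the last sub-case, $|V(H_i)| < 2k$: in a narrow numerical range the trivial bound $\binom{|V(H_i)|}{2}$ can exceed $\gamma(|V(H_i)| - k)$, so a careful case analysis, or else a cleverer choice of extremal family with a tighter density threshold, is needed to close the argument.
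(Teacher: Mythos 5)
The paper does not prove this theorem but cites it from Diestel (Theorem 1.4.3), and your proposal follows exactly the standard proof found there: take a minimal subgraph $H$ with $|V(H)| \ge 2k$ and $|E(H)| > \gamma(|V(H)| - k)$ where $\gamma = d(G)/2$, establish a minimum-degree bound, then derive a contradiction from a small separator by applying the minimality of $H$ to the two sides.

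However, there is a gap, and it is a self-inflicted one. In step (ii) you prove only $\delta(H) > \gamma - k$, but the very same vertex-deletion argument gives the stronger bound $\delta(H) > \gamma$: if $d_H(v) \le \gamma$, then $|E(H - v)| \ge |E(H)| - \gamma > \gamma(|V(H)| - k) - \gamma = \gamma(|V(H - v)| - k)$, and $|V(H-v)| \ge 2k$ by (i), so $H - v$ would contradict minimality. You gave away a factor of $k$ for no reason. This weaker bound is exactly what creates the troublesome sub-case $|V(H_i)| < 2k$ that you flag at the end: there the trivial bound $\binom{|V(H_i)|}{2}$ can indeed exceed $\gamma(|V(H_i)| - k)$ (for instance when $\gamma = 2k$ and $|V(H_i)| \approx 2k - \sqrt{2k}$), so the case analysis you defer does not actually close. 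With the sharp bound $\delta(H) > \gamma \ge 2k$ the sub-case disappears entirely: any vertex $v \in V_i$ has all of its more than $2k$ neighbours inside $H_i$, forcing $|V(H_i)| > 2k$, so the minimality of $H$ applies to both sides and the edge-count contradiction goes through immediately. The stronger minimum-degree bound also yields $|V(H)| > \gamma$ directly (via $|V(H)| \ge \delta(H)+1$), making your quadratic argument in (iii) unnecessary. So the approach is correct and essentially the standard one, but as written the proof is incomplete; strengthening (ii) to $\delta(H) > \gamma$ repairs it.
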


The main idea behind our lower bound result for small $s$ is to show that instead of having just a single large highly connected subgraph using up to $s$ colours we will find many such subgraphs, using the same set of $s$ colours. While this is in a sense stronger than what we need, the main benefit is for the next step, in which we want to again find many even larger subgraphs using some fixed $s+1$ colours. 
We begin with a lemma which quantifies how many such large highly connected subgraphs one can find in a single colour. 
We note that a result in a similar direction was derived in \cite{michael-connectivity} and turned out to be quite useful over the years, see e.g.\ \cite{michael-2}. The key distinction in the following lemma is that it works with average degree assumption rather than a minimum degree one and that it tracks the tradeoff between the actual size of the components and the number of them we can find. 

\begin{lem}\label{lem:many-same-col-components}
    Let $r \ge 2, k \ge 1, n > 16r^2(k-1)+1$ and $G$ be an $n$-vertex graph with at least $\frac1r \binom{n}{2}$ edges. Then, there exists $j \ge 2$ such that we can find at least $\frac{r}{4^{j} \log r}$ vertex disjoint $k$-connected subgraphs, each with at least $2^{j-3}\cdot \frac{n}{r} \ge 2$ vertices.
\end{lem}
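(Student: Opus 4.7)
The plan is to iteratively extract disjoint $k$-connected subgraphs from $G$ via \Cref{mader} and then analyse the resulting size distribution via a dyadic bucketing argument.

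Set $G_0=G$ and, while $d(G_{i-1})\ge 4(k-1)$, use \Cref{mader} to pick a $k$-connected subgraph $H_i\subseteq G_{i-1}$ with $d(H_i)>d(G_{i-1})-2(k-1)$ (so in particular $m_i:=|V(H_i)|\ge d(G_{i-1})-2(k-1)+2$), and put $G_i:=G_{i-1}\setminus V(H_i)$. The hypothesis $n>16r^2(k-1)+1$ ensures $d(G_0)\ge(n-1)/r\gg 4(k-1)$, so the process is non-trivial; at termination at some time $t$ we have $e(G_t)<2(k-1)(n-N_t)\le 2(k-1)n$, where $N_t=\sum_{i\le t}m_i$. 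The case $k=1$ I would handle separately by simply taking the connected components of $G$, which are automatically $1$-connected.

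The first step is the key double counting. Each iteration removes $e_{G_{i-1}}(V(H_i))\le m_i(n-N_{i-1})$ edges, and summing and expanding the telescope $N_{i-1}=\sum_{\ell<i}m_\ell$ yields the identity
\begin{equation*}
    \sum_i m_i^2 \;+\; N_t(2n-N_t) \;\ge\; 2\bigl(e(G)-e(G_t)\bigr) \;\ge\; \tfrac{n(n-1)}{r}-4(k-1)n \;\ge\; \tfrac{n(n-1)}{2r},
\end{equation*}
where the final step uses $n>16r^2(k-1)+1$. So either $N_t$ is already of order $n/r$, or $\sum_i m_i^2$ is of order $n^2/r$; in either branch I would run a dyadic argument (on $\sum m_i$ in the first branch, on $\sum m_i^2$ in the second). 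Note also that for $k=1$ the sharper bound $\sum m_i^2\ge n^2/r$ is immediate from $\sum\binom{m_i}{2}\ge e(G)$, which already closes this case.

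Next, for $j\ge 2$ let $c_j:=|\{i:m_i\ge 2^{j-3}n/r\}|$, and suppose for contradiction that $c_j<r/(4^j\log r)$ for every $j$ in the admissible range $2\le j\le 3+\lceil\log r\rceil$ (in which $2^{j-3}n/r\ge 2$). Splitting over the dyadic shells $B_j=\{i:2^{j-3}n/r\le m_i<2^{j-2}n/r\}$ with $|B_j|\le c_j$, the assumed bound on $c_j$ yields
\begin{equation*}
    \sum_{i:\,m_i\ge n/(2r)} m_i^2 \;<\; \sum_{j=2}^{3+\lceil\log r\rceil} \frac{r}{4^j\log r}\left(\frac{2^{j-2}n}{r}\right)^{\!2} \;=\; \frac{\lceil\log r\rceil+O(1)}{16\,r\log r}\cdot n^2 \;\le\; \frac{n^2}{8r},
\end{equation*}
and analogously $\sum_{i:\,m_i\ge n/(2r)} m_i<n/(8\log r)$. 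Combining these with the crude bounds $\sum_{i:\,m_i<n/(2r)}m_i^2\le\tfrac{n}{2r}N_t$ and $\sum_{i:\,m_i<n/(2r)}m_i\le N_t$, and plugging back into the identity, I would do a case split on whether $N_t$ is large or small compared with $n/r$: in the first branch the dyadic estimate on $\sum m_i$ rules out the required size for $N_t$, while in the second the estimate on $\sum m_i^2$ is inconsistent with the $n(n-1)/(2r)$ lower bound. Either way we obtain a numerical contradiction with the assumption, producing the desired $j\ge 2$.

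The main obstacle I anticipate is that both the lower bound and the dyadic upper bound scale like $n^2/r$, so the argument is sensitive to constants; the $\log r$ factor in $r/(4^j\log r)$ is exactly the slack produced by the $O(\log r)$ dyadic shells, and without it the contradiction collapses. A second delicate point is that pieces with $m_i<n/(2r)$ contribute to $N_t$ and $\sum m_i^2$ but not to any $c_j$ with $j\ge 2$, so they must be absorbed by the $N_t(2n-N_t)$ term in the identity; making the case split on the size of $N_t$ close tightly is where the argument needs to be balanced most carefully.
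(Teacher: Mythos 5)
Your proposal has a genuine gap, and the obstacle you yourself flagged (the $N_t(2n-N_t)$ term and the small pieces) is in fact fatal, not merely delicate. The crude charge $e_{G_{i-1}}(V(H_i))\le m_i(n-N_{i-1})$ is too weak to carry the double count. Since the first Mader piece already satisfies $m_1 > d(G)-2(k-1)\ge \frac{n-1}{r}-2(k-1) > \frac{n}{2r}$ under the hypothesis on $n$, you always have $N_t>n/(2r)$, and for $N_t\gtrsim n/(4r)$ the term $N_t(2n-N_t)$ alone already exceeds $n(n-1)/(2r)$. So the inequality $\sum m_i^2 + N_t(2n-N_t)\ge n(n-1)/(2r)$ is always trivially satisfied and places no constraint whatsoever on $\sum m_i^2$. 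The intended ``first branch'' (dyadic on $\sum m_i$ when $N_t$ is large) does not close either: pieces with $m_i<n/(2r)$ can carry essentially all of $N_t$ while lying in no admissible bucket $c_j$, and your process does not forbid them — once $d(G_{i-1})$ drops near $4(k-1)$, Mader only guarantees $m_i>2(k-1)$, which is far below $n/(2r)$. Even if all pieces happened to be $\ge n/(2r)$, a pigeonhole on $\sum m_i$ in shell $j$ produces on the order of $\frac{1}{2^j\log r}$ pieces when $N_t\approx n/r$, a factor $r/2^j$ short of the required $\frac{r}{4^j\log r}$; only control of $\sum m_i^2$ makes the shell budgets balance uniformly in $j$.

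The ingredient you are missing is \emph{maximality}, and it is the heart of the paper's proof. The paper does not iterate Mader; it greedily removes the \emph{largest} $k$-connected subgraph $V_i$ of the remaining graph, and only continues while that subgraph has more than $n/(2r)+1$ vertices. Mader is invoked once, at the end, to certify that the leftover vertex set $V'$ has average degree below $n/(2r)+2k-2$ (else a large $k$-connected piece would still exist). The payoff of greedy maximality is a far sharper charge per step: every vertex outside $V_i$ in the current graph has at most $k-1$ neighbours in $V_i$ (else $V_i\cup\{v\}$ would be a larger $k$-connected subgraph), so at most $\binom{s_i}{2}+n(k-1)$ edges are removed at step $i$, rather than $s_i(n-N_{i-1})$. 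Summing, and using the bound on $G[V']$, gives $\sum\binom{s_i}{2}\ge\frac{1}{4r}\binom{n}{2}$, and the stopping rule guarantees every $s_i>n/(2r)$, which is exactly what makes the dyadic pigeonhole on $\sum s_i^2$ land in the right range for every shell. Your handling of $k=1$ via connected components is fine and essentially matches the paper; but for $k\ge 2$ the Mader-only iteration, without a maximal choice of $V_i$ and without the $n/(2r)$ size floor on removed pieces, cannot control the cross edges and leaves the argument vacuous.
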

\begin{proof}
    We will construct our subgraphs greedily. Let $V_1$ be the vertex set of a largest $k$-connected subgraph of $G$. In general, given $V_1,\ldots, V_{i-1}$ we let $V_i$ be the vertex set of the largest $k$-connected subgraph of $G \setminus (V_1 \cup \ldots \cup V_{i-1})$, so long as such a subgraph exists of size more than $\frac{n}{2r}+1$. Let $V_1 \cup \ldots \cup V_t$ be the disjoint sets that the above process produced, and let $V'=V(G) \setminus (V_1 \cup \ldots \cup V_{t})$. Let us also write $s_i=|V_i|$.

    Since by adding to a $k$-connected graph a vertex adjacent to at least $k$ of its vertices keeps the graph $k$-connected, we know that the total number of edges in $G \setminus (V_1 \cup \ldots \cup V_{i-1})$ with at least one endpoint in $V_i$ is at most $\binom{s_i}{2}+n(k-1)$. 
    Turning to $V',$ if the average degree in $G[V']$ is at least $\frac{n}{2r}+2k-2$ then we claim it contains a $k$-connected subgraph with at least $\frac{n}{2r}+1$ vertices, contradicting the definition of $V'$. If $k=1$ this holds by taking a vertex of maximum degree and all of its at least $\frac{n}{2r}$ neighbours. For $k \ge 2,$ by Mader's Theorem (\Cref{mader}) it contains a subgraph with connectivity at least $k$, since $\frac{n}{8r} \ge k,$ and average degree larger than $\frac{n}{2r}$. Hence, we may conclude that the average degree of $G[V']$ is less than $\frac{n}{2r}+2k-2\le \frac{n-1}{r}$ (which guarantees $V' \neq V(G)$ and $|V'|\le n-1$) and in particular $G[V']$ has at most $\frac{n(n-1)}{4r}+(k-1)n$ edges.

    Using this estimate on the number of edges we get that there are at most 
    $$\frac 1r \binom{n}{2} \le \frac{n(n-1)}{4r}+(k-1)n+\sum_{i=1}^t \left(\binom{s_i}{2}+n(k-1) \right) \le \frac{1}{2r}\binom{n}{2}+(t+1)(k-1)n+\sum_{i=1}^t \binom{s_i}{2}$$
    edges in $G$. Since $s_i > \frac{n}{2r},$ we have $t < 2r$. So $(t+1)(k-1)n \le 2r(k-1)n \le \frac{1}{4r}\binom{n}{2},$ using that $n>16r^2(k-1)$. This implies 
    $$\sum_{i=1}^t \binom{s_{i}}{2} \ge \frac{1}{4r} \cdot \binom{n}{2},$$ which in turn implies $s_1^2+\ldots+s_t^2 \ge \frac{n^2}{4r}$.

    Since for all $ i \in [t]$ we have $n \ge s_i \ge \max\left\{\frac{n}{2r},2\right \}$ we know that there exists $2\le j \le \ceil{\log r}+2$ such that the set $I \subseteq [t]$ consisting of all $i$ such that $s_i \in \left[2^{j-1}\cdot \frac{n}{4r},2^{j}\cdot \frac{n}{4r}\right]$ satisfies  
    $$\frac{n^2}{4r (\log r +2)} \le \sum_{i \in I} s_i^2 \le |I|\cdot 2^{2j}\cdot \frac{n^2}{16r^2}.$$
    This implies $|I| \ge \frac{r}{ 4^{j}\log r}$. Finally, note that since $s_i \ge 2$ we may also assume that $2^{j-1} \cdot \frac{n}{4r} \ge 2$.
\end{proof}

Let us note that if $k=1$ the above lemma does not require $n$ to be large, besides the trivial bound $n>0$. This will be important in our applications since we are going to apply the above lemma multiple times and may lose control of the number of vertices we are still working with after the first application; luckily it will suffice for us to apply the lemma with $k=1$ from the second instance on. 

Before turning to the main result of this section we need a final prerequisite, an immediate observation determining the bipartite Ramsey number for matchings. 

\begin{obs}\label{bip-ramsey-matching}
    Given positive integers $r,k$ and $n > r(k-1)$ in any $r$-colouring of $K_{n,n}$ we can find a monochromatic matching of size $k$.
\end{obs}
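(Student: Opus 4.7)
The plan is to combine a pigeonhole argument on edge counts with the standard König bound relating matchings and vertex covers in bipartite graphs.

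First I would let $A, B$ be the two sides of $K_{n,n}$, each of size $n$, so the total number of edges is $n^2$. Since there are $r$ colours, by pigeonhole some colour class $c$ contains at least $n^2/r$ edges. The hypothesis $n > r(k-1)$ rearranges to $n^2/r > (k-1) n$, so the graph $G_c$ of edges of colour $c$ has strictly more than $(k-1) n$ edges.

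Next I would appeal to König's theorem: in any bipartite graph the size of a maximum matching equals the size of a minimum vertex cover. Suppose for contradiction $G_c$ has no matching of size $k$; then there is a vertex cover $S$ of $G_c$ with $|S| \le k-1$. Every vertex of $S$ lies in either $A$ or $B$, hence is incident to at most $n$ edges of $K_{n,n}$, so $S$ covers at most $(k-1) n$ edges of $G_c$. This contradicts $|E(G_c)| > (k-1) n$, and therefore $G_c$ contains a matching of size $k$, which is the desired monochromatic matching.

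The proof is essentially a one-liner, so there is no real obstacle; the only thing to note is that König's theorem is needed (or equivalently the fact that a bipartite graph with more than $(k-1)n$ edges on each side contains a matching of size $k$), and that the threshold $n > r(k-1)$ is exactly what makes the pigeonhole count exceed the König bound. One could also avoid König by a direct greedy argument: pick any edge of colour $c$, delete its two endpoints (which removes at most $2n - 1 \le 2n$ edges), and iterate; since each step removes fewer than $2n$ edges from a graph with more than $(k-1) n$ edges, one can repeat at least $k$ times, producing a monochromatic matching of size $k$. Either formulation works and yields the observation immediately.
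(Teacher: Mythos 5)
Your main argument---pigeonhole on the $n^2$ edges to get a colour class with more than $(k-1)n$ edges, then K\"onig's theorem to extract a matching of size $k$---is correct, but it takes a different route than the paper. The paper's proof is shorter and more elementary: fix a perfect matching of $K_{n,n}$, which has $n > r(k-1)$ edges; by pigeonhole at least $k$ of those edges receive the same colour, and those $k$ edges already form a matching. This avoids K\"onig entirely. Your route does prove a slightly more general fact in passing (any bipartite graph on $n+n$ vertices with more than $(k-1)n$ edges contains a matching of size $k$), which is occasionally useful, but it is heavier machinery for this particular observation.

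The alternative greedy argument you sketch at the end, however, does not work. Each greedy step deletes up to $2n-1$ edges, so from a graph with more than $(k-1)n$ edges you can only guarantee on the order of $(k-1)/2$ successful iterations, not $k$; the threshold $(k-1)n$ matches the K\"onig bound exactly but is roughly half of what the naive greedy count requires (about $(k-1)(2n-1)$). Concretely, take $k=3$, $n=3$, and let $G_c$ be $K_{3,3}$ with parts $\{a_1,a_2,a_3\}$, $\{b_1,b_2,b_3\}$ minus the two edges $a_2b_2$ and $a_2b_3$; then $G_c$ has $7 > (k-1)n = 6$ edges and admits the matching $a_1b_2,\,a_2b_1,\,a_3b_3$ of size $3$, but if the greedy first picks $a_1b_1$, the only surviving edges are $a_3b_2$ and $a_3b_3$, which share $a_3$, so the greedy stalls at size $2$. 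Turning the greedy into a correct proof would require choosing edges carefully or using augmenting paths, which is essentially K\"onig (or Hall) in disguise.
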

\begin{proof}
    Pick an arbitrary perfect matching and observe it contains more than $r(k-1)$ edges. By the pigeonhole principle, this means some $k$ of these edges have the same colour, providing us with the desired matching.
\end{proof}
It is easy to see that the above lemma is tight, we simply split one side into $r$ parts of size at most $k-1$ each and colour all edges incident to part $i$ in the colour $i$. 

We are now ready to prove our lower bound theorem for small values of $s$. We will denote by $f(n,r,s,k)$ the largest number of vertices in a $k$-connected subgraph using up to $s$ colours that we can guarantee in any $r$-edge colouring of $K_n$.

\begin{thm}\label{thm:f-s-small}
    Suppose $1\le s \le (1-o_r(1))\log \log r$ and $n>16r^2(k-1)+1,$ then we have 
    $$f(n,r,s,k) \ge 2^{s-2} \cdot \frac{n}{r}.$$
\end{thm}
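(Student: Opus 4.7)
The plan is to prove the statement by induction on $i$ from $1$ to $s$, establishing the strengthened claim that one can find $N_i$ vertex-disjoint $k$-connected subgraphs of $K_n$, each on at least $T_i := 2^{i-2}\cdot n/r$ vertices, all using only edges from a common set $M_i \subseteq [r]$ with $|M_i|\le i$. Maintaining many such subgraphs in a shared colour palette is the crux: it allows us to use bipartite Ramsey in the inductive step to merge several of them through a $k$-matching in a single new colour.

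For the base case $i=1$, a pigeonhole argument selects a colour with at least $\binom{n}{2}/r$ edges, and \Cref{lem:many-same-col-components} applied with the given $k$ (whose hypothesis $n > 16 r^2(k-1)+1$ is assumed) supplies some $j_1 \ge 2$ and $N_1 \ge r/(4^{j_1}\log r)$ disjoint $k$-connected subgraphs in this colour, each on at least $2^{j_1-3}\cdot n/r \ge n/(2r) = T_1$ vertices.

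For the inductive step, given the $N_i$ blocks of size $T_i$, I would apply \Cref{bip-ramsey-matching} in the $r$-coloured $K_{T_i,T_i}$ between each pair (valid because $T_i \ge n/(2r)>r(k-1)$), obtaining a monochromatic $k$-matching in some colour; recording these colours defines an $r$-colouring of the auxiliary complete graph $K_{N_i}$ on the blocks. Pigeonhole picks a colour $c$ appearing on at least $\binom{N_i}{2}/r$ auxiliary edges, and applying \Cref{lem:many-same-col-components} with $k=1$ (for which the host-size hypothesis is trivial) to the colour-$c$ subgraph supplies $N_{i+1}\ge r/(4^{j_{i+1}}\log r)$ disjoint connected super-blocks in the auxiliary graph, each containing at least $B := 2^{j_{i+1}-3}N_i/r\ge 2$ original blocks. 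For each such super-block, the union of its $B$ blocks together with the colour-$c$ $k$-matchings along any spanning tree of the super-block forms a $k$-connected subgraph of $K_n$ (after any $k-1$ vertex deletions each block stays connected and each spanning-tree $k$-matching retains an edge), on at least $B T_i \ge 2T_i = T_{i+1}$ vertices, using colours $M_i \cup \{c\}$; thus $|M_{i+1}| \le i+1$.

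The main obstacle is verifying that $N_i$ stays large enough across iterations for the next application of \Cref{lem:many-same-col-components} to remain non-trivial. The condition $B \ge 2$ forces $j_{i+1} \ge 4 + \log(r/N_i)$, whence (worst case) $N_{i+1} \ge N_i^2/(256\, r \log r)$. Starting from $N_1 \ge r/(16 \log r)$ and iterating this recursion gives the explicit bound $N_i \ge r/(256\log r)^{2^i-1}$, which exceeds the threshold $\approx 16\sqrt{r\log r}$ required for the next lemma application precisely while $2^i \le \log r/(2\log\log r)$, i.e., while $i \le (1-o_r(1))\log\log r$. This is exactly the range of $s$ allowed by the hypothesis, so the induction runs all the way to $i=s$, producing the desired $k$-connected subgraph of size at least $T_s = 2^{s-2}n/r$ using at most $s$ colours.
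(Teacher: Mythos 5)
Your overall strategy---iterating \Cref{lem:many-same-col-components} on contracted super-blocks, using \Cref{bip-ramsey-matching} to select a single colour per pair of blocks carrying a $k$-matching, and merging along a monochromatic spanning tree (which, as you correctly note, preserves $k$-connectivity)---is exactly the paper's approach. The gap is in the analysis of $N_i$. \Cref{lem:many-same-col-components} only \emph{guarantees existence} of some $j_{i+1}\ge 2$ with $2^{j_{i+1}-3}N_i/r\ge 2$; it does not allow you to \emph{choose} $j_{i+1}$ equal to the minimal value $4+\log(r/N_i)$. Your claimed recursion $N_{i+1}\ge N_i^2/(256\,r\log r)$ is obtained by plugging in this minimal $j_{i+1}$, which actually \emph{maximises} $N_{i+1}=\lceil r/(4^{j_{i+1}}\log r)\rceil$; in the genuine worst case the lemma can return a much larger $j_{i+1}$, so that $N_{i+1}$ crashes to $1$ after any step, and the iteration need not survive $s$ rounds. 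Consequently the bound $N_i\ge r/(256\log r)^{2^i-1}$ and the conclusion that ``the induction runs all the way to $i=s$'' are unjustified.

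Moreover, by tracking only the floor $T_i=2^{i-2}n/r$ rather than the actual product of step sizes, you throw away precisely what is needed to rescue the argument when the process does terminate early: if $N_{t+1}=1$ then $j_{t+1}\gtrsim\tfrac12\log r$, which means the final super-block is a factor roughly $\sqrt{r}/\log r$ larger than the doubling estimate, far exceeding $2^{s-2}n/r$ for $s\le(1-o_r(1))\log\log r$. This compensation is the heart of the paper's proof: it tracks $s_1\cdots s_i$ and establishes the inductive inequality $s_1\cdots s_i\ge 2^{j_i/2^{i-1}-(1-1/2^{i-1})(3+\log\log r)}\cdot n/(8r)$, then uses $j_t\ge\tfrac12\log r-\tfrac12\log\log r$ at termination together with $t<s\le\log\log r-\log(4+3\log\log r)$. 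Your proposal needs some analogue of this early-termination analysis to be complete.
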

\begin{proof}
We will prove that the desired inequality holds for $s \le \log \log r-\log (4+3\log \log r).$

Let $n_0=n$ and let us apply \Cref{lem:many-same-col-components} to the graph consisting of the edges in the majority colour $c_1$ to find $j_1\ge 2$ such that there exists at least $n_1 = \ceil{\frac{r}{4^{j_1} \log r}}$ $k$-connected subgraphs, each with at least $s_1\ge 2^{j_1-3}\cdot \frac nr \ge 2$ vertices. We now contract each of these graphs into a single vertex to obtain a (multi) coloured $K_{n_1}$ where each vertex corresponds to a set of at least $s_1$ vertices in our original graph, spanning a $k$-connected subgraph in colour $c_1$. Next, we keep an edge of only one colour $c$ between any two vertices $u,v\in V(K_{n_1}),$ where $c$ is selected in the following way. Let $U,V\subseteq V(K_n)$ be the sets of vertices in our original graph corresponding to $u$ and $v$ respectively. We choose $c$ to be such that the original graph contains a matching of size $k$ in $c$ between $U$ and $V$. Note that there exists such a colour by \Cref{bip-ramsey-matching} since $s_1 \ge \frac{n}{2r} > r(k-1)$. We then repeat, applying \Cref{lem:many-same-col-components}, but from now on always with $k=1$, to the subgraph consisting of the majority colour in the current $K_{n_i}$, for as long as we have at least two vertices left. So, after step $i$ for some $j_i\ge 0$ we have at least $n_i = \ceil{\frac{r}{4^{j_i} \log r}}$ vertices each corresponding to at least $s_1 \cdot s_2 \cdots s_i$ vertices of our original $K_n,$ which induce a $k$-connected graph using the edges of colours $c_1,\ldots,c_i$ and where $s_i\ge 2^{j_i-3}\cdot \frac{n_{i-1}}{r}\ge 2$. Note that placing a matching of size $k$ between two $k$-connected graphs produces a new $k$-connected graph. Therefore, our choice for the colouring of $K_{n_1}$ guarantees that a connected component in any colour $c$ in $K_{n_i}$, when $i \ge 2$, corresponds to a $k$-connected subgraph of our original graph using colours $c_1,\ldots,c_{i-1}$ and $c$.

Let us suppose that the process runs for $t$ steps so that $n_t=1$ and $n_i \ge 2$ for all $i < t$. Since $n_t=1$ we have $\frac{r}{\log r} \le 4^{j_t},$ or after taking a logarithm that $j_t \ge \frac 12 \log r -\frac12 \log \log r$. Using that $n_{i-1} \ge 2$ for all $i \le t,$ we get 
$$2 \le 2^{j_i-3}\cdot \frac{n_{i-1}}{r}\le 2^{j_i-3}\cdot \frac{\ceil{\frac{r}{4^{j_{i-1}} \log r}}}{r}\le 2^{j_i-3}\cdot \frac{2}{4^{j_{i-1}} \log r}.$$ 
After taking a logarithm of both sides this implies $j_i \ge 2j_{i-1}+3+\log \log r$.

If $t \ge s,$ then we know that after $s$ steps we connected at least $s_1 \cdot s_2 \cdots s_s \ge 2^{s-2} \cdot \frac nr$ vertices, since $s_1 \ge \frac 12 \cdot \frac nr$ and $s_i \ge 2$ for all $i$, and are done. So we may assume that the total number of steps $t$ satisfies $t<s,$ and hence it suffices to show that $s_1 \cdot s_2 \cdots s_t \ge 2^{s-2} \cdot \frac nr.$  

Finally, we prove by induction on $i$ that for $i \le t$ we have $s_1 \cdot s_2 \cdots s_i \ge 2^{j_i/2^{i-1}-(1-1/2^{i-1})(3+\log \log r)} \cdot \frac n{8r}.$ Indeed the base case $i=1$ holds since $s_1 \ge 2^{j_1-3} \cdot \frac nr$ and for $i>1$ we get
\begin{align*}
    s_1 \cdot s_2 \cdots s_i &\ge 2^{j_{i-1}/2^{i-2}-(1-1/2^{i-2})(3+\log \log r)} \cdot \frac n{8r} \cdot s_i \\
    &\ge 2^{j_{i-1}/2^{i-2}-(1-1/2^{i-2})(3+\log \log r)} \cdot \frac n{8r} \cdot  \frac{2^{j_i-3}}{4^{j_{i-1}} \log r}\\
    &=2^{j_i-(2-1/2^{i-2})(j_{i-1}+3+\log \log r)} \cdot \frac n{8r}\\
    &\ge 2^{j_i/2^{i-1}-(1-1/2^{i-1})(3+\log \log r)} \cdot \frac n{8r},
\end{align*}
where in the second inequality we used $s_i \ge 2^{j_i-3}\cdot \frac{n_{i-1}}{r} = 2^{j_i-3}\cdot \frac{\ceil{\frac{r}{4^{j_{i-1}} \log r}}}{r}\ge \frac{2^{j_i-3}}{4^{j_{i-1}} \log r},$ and in the final inequality we used $(1-1/2^{i-1})j_i \ge (1-1/2^{i-1})(2j_{i-1}+3+\log \log r).$ This completes the induction step.

Applying this inequality for $i=t$ gives $s_1 \cdot s_2 \cdots s_t \ge 2^{j_t/2^{t-1}-3-\log \log r} \cdot \frac n{8r}>2^{s-2} \cdot \frac nr$, since $$\frac{j_t}{2^{t-1}}-3-\log \log r \ge \frac{\log r  - \log \log r}{2^{t}}-3-\log \log r \ge \frac{\log r}{2^{t}}-3-2\log \log r > \log \log r +1 \ge s+1,$$
where in the penultimate inequality we used $t<s \le \log \log r-\log (4+3\log \log r)$.
\end{proof}

\subsection{Behaviour for larger \texorpdfstring{$s$}{s}}
The following proposition is a slight improvement compared to a similar result from \cite{LMP-multicolour} showing that $f(n,r,s) \ge \left(1-e^{-\frac{s^2}{3r}}\right)\cdot n.$ We include it here mostly for completeness and since it is a very short and nice argument and as we will see is close to being tight. 

\begin{prop}\label{prop:f-s-medium}
    For any $1\le s \le \frac r2$ we have
    \begin{align*}
        f(n,r,s) &\ge n-n \cdot \left(1- \frac s{r-s+1}\right) \cdots  \left(1-\frac 2{r-1}\right)\cdot \left(1-\frac 1r\right)\ge \left(1-e^{-\frac{s(s+1)}{2r}}\right)\cdot n.
    \end{align*}
\end{prop}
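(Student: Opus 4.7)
The plan is to build up a connected subgraph iteratively, adding one colour at a time to extend the component containing some fixed vertex. Pick any $v_0 \in V(K_n)$ and initialise $V_0 = \{v_0\}$, $S_0 = \emptyset$. At step $i \in \{1,\ldots,s\}$ I would choose a colour $c_i \in [r]\setminus S_{i-1}$, set $S_i = S_{i-1} \cup \{c_i\}$, and let $V_i$ denote the connected component of $v_0$ in the graph whose edges are coloured in $S_i$. Writing $X_i = n - |V_i|$ for the number of outside vertices, the target recursion is
\[
X_i \;\le\; X_{i-1}\Bigl(1 - \tfrac{i}{r-i+1}\Bigr).
\]
Iterating this from $X_0 = n-1 < n$ immediately yields the displayed product bound, and the exponential form then follows from $1-x \le e^{-x}$ together with $\sum_{i=1}^s \tfrac{i}{r-i+1} \ge \sum_{i=1}^s \tfrac{i}{r} = \tfrac{s(s+1)}{2r}$.

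The crucial structural observation is that $V_{i-1}$ is, by construction, the entire $S_{i-1}$-component of $v_0$, so no edge between $V_{i-1}$ and its complement can be coloured in $S_{i-1}$. Consequently all $|V_{i-1}|\cdot X_{i-1}$ crossing edges live in the $r-i+1$ remaining colours. For each outside vertex $w$ let
\[
T_w^{(i)} \;:=\; \{c \in [r]\setminus S_{i-1} : w \in \mathrm{comp}_{S_{i-1}\cup\{c\}}(v_0)\}
\]
be its set of \emph{useful} colours at step $i$. Choosing $c_i$ to maximise the number of outside vertices brought in and averaging over the $r-i+1$ candidate colours yields $|V_i| - |V_{i-1}| \ge \tfrac{1}{r-i+1}\sum_{w \notin V_{i-1}} |T_w^{(i)}|$, so the whole argument reduces to the averaging estimate $\sum_{w\notin V_{i-1}} |T_w^{(i)}| \ge i\, X_{i-1}$.

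The heart of the proof (and the main obstacle) is this averaging inequality: on average each outside vertex should have at least $i$ useful colours. The case $i=1$ is automatic, as every outside vertex contributes the single colour of its edge to $v_0$. For $i \ge 2$, however, the bound is genuinely stronger than the trivial pigeonhole $|T_w^{(i)}| \ge 1$, which would only produce the linear estimate $f(n,r,s) \ge ns/r$ after iteration. The extra factor of $i$ must come from the accumulated structure of previous rounds: $T_w^{(i)}$ contains not just the colours of direct $w$-to-$V_{i-1}$ edges, but every colour $c$ linking $w$'s entire $S_{i-1}$-component to $V_{i-1}$, and this ``reach'' can be arranged to compound through the iteration. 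Making this precise will require more than just tracking $|V_{i-1}|$; the plan is to maintain a strengthened invariant through the recursion, for instance a lower bound on the average colour-diversity of the cut $(V_{i-1}, V\setminus V_{i-1})$, so that the previously chosen $c_1,\ldots,c_{i-1}$ guarantee the required factor $i$ at each subsequent step.
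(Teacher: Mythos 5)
Your recursion $X_i \le X_{i-1}\bigl(1-\frac{i}{r-i+1}\bigr)$ is exactly what is needed, but you have left the crucial step — the averaging estimate $\sum_{w\notin V_{i-1}}|T_w^{(i)}|\ge i\,X_{i-1}$ — as an unproved aspiration, and, more seriously, in the framework you chose it is not true in general. Because you fix $v_0$ and grow its component greedily, $V_{i-1}$ is merely \emph{some} $(i-1)$-colour component, and nothing prevents the cut $(V_{i-1},\,V\setminus V_{i-1})$ from being essentially monochromatic. For instance, if $v_0$ lies in a small cluster whose internal edges all have colour $1$ and whose edges to the rest of the graph all have colour $2$, then after choosing $c_1=1$ every outside vertex sees only colour $2$ in the cut, and $\sum_w|T_w^{(2)}|$ can be as small as $X_1$, not $2X_1$. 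Your proposed fix (tracking ``colour diversity of the cut'' as an extra invariant) does not obviously repair this, because there is no reason the greedy choice should keep that diversity growing.

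The paper avoids this entirely by \emph{not} growing a fixed vertex's component. Instead, at the $s$-th inductive step it lets $S$ be the \emph{largest} connected component over \emph{all} choices of $s-1$ colours, and then exploits that global maximality: if some $v\notin S$ sent edges in at most $s-1$ distinct colours to $S$, the star from $v$ into $S$ would already connect $S\cup\{v\}$ using at most $s-1$ colours, contradicting the choice of $S$. Hence \emph{every} outside vertex sends edges of at least $s$ distinct colours to $S$ — a pointwise bound, strictly stronger than the average-$i$ bound you wanted. Picking a uniformly random new colour among the $r-s+1$ colours not used by $S$ then hits each outside vertex with probability at least $\frac{s}{r-s+1}$, and the rest is your computation. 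So the missing ingredient is maximality of the current component, re-optimised at each step, rather than greedy growth from a fixed root. If you replace ``component of $v_0$'' by ``maximum component over all $(i-1)$-colour choices'' and quote the inductive hypothesis for its size, your argument becomes the paper's.
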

\begin{proof}
We will prove the stronger bound by induction on $s$. The basis, for $s=1$ the desired lower bound of $\frac{n}{r}$ follows by looking at a vertex and the colour which appears the most often on the edges incident to it. Suppose now that the result holds for $s-1$ colours, and let $S$ be the largest component one can obtain using $s-1$ colours. 

For any vertex $v$ outside of $S,$ if $v$ sends edges of at most $s-1$ different colours towards $S$, then we can connect $S \cup \{v\}$ using the edges of the star between $v$ and $S$ and their at most $s-1$ colors, contradicting the maximality of $S$. Hence, every vertex outside of $S$ sends edges of at least $s$ different colours to $S$. Now by picking a uniformly random new colour, we will connect any such vertex to $S$ with probability at least $\frac{s}{r-s+1}$. This means that the expected number of vertices we connect to $S$ is $\frac{s}{r-s+1}\cdot (n-|S|)$. Therefore, we can choose a colour in such a way that the new set of $s$ colours connects at least $$|S|+\frac{s}{r-s+1}\cdot (n-|S|)=n-(n-|S|)\cdot \left(1- \frac s{r-s+1}\right)$$ vertices, which after invoking the induction assumption on $|S|$ gives the desired bound.
\end{proof}

\textbf{Remark.} Let us also note that under certain regularity assumptions on the underlying colouring the above argument performs significantly better. For example, if the underlying colouring is approximately uniformly balanced, meaning that no vertex has a degree in any colour larger than $C\cdot \frac{n}{r},$ for some $C\ge 1$, then we obtain a much stronger lower bound. Indeed, if $|S|=D\cdot \frac{n}{r}\le \frac n2$, then by our assumption we know that every vertex needs to send edges of at least $\frac{|S|}{Cn/r}=\frac{D}{C}$ different colours to $S$ so in expectation we reach $\frac{D/C}{r-s+1}\cdot (n-|S|) \ge \frac{1}{2C} \cdot |S|$
new vertices. In particular, the size of the largest component in some $s$ colours is at least $\left(1+\frac{1}{2C}\right)^{s-1} \cdot \frac{n}{r}.$ So, for example, already with $2C\log r$ colours, we can connect $\frac{n}{2}$ vertices, in stark contrast to the general case where we need at least about $\sqrt{r}$ colours to achieve this.

We conclude by showing that the lower bound given in \Cref{prop:f-s-medium} is actually close to tight for the entire range of $s$. 
It is tight up to a constant in the exponent of the lower order term when $s\ge \sqrt{r \log r}$ by our upper bounds for the $g$ function, namely by \Cref{cor:g-s-big-ub}.
For $s \le \sqrt{r}$ the bound in \Cref{prop:f-s-medium} reduces to $f(n,r,s) \ge \Omega\left(\frac{s^2}{r}\right) \cdot n.$ The following proposition shows this is tight up to a logarithmic factor, disproving in quite a strong form a conjecture from \cite{LMP-multicolour}.

\begin{prop}\label{prop:upr-bnd-f-medium-s}
For any $2\le s\le r,$ provided $n \ge \frac rs$ we have
    $$f(n,r,s) \le \frac{8s(s+1)\log r}{r} \cdot n.$$
\end{prop}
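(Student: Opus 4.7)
The strategy is to build a random $r$-edge colouring of $K_n$ and show that, with positive probability, every set of $s$ colours spans only small connected components; a standard derandomisation then yields the desired explicit colouring. The construction is a ``random blow-up'' of a random edge-colouring of a much smaller complete graph.

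Set $M:=\lfloor c r/s\rfloor$ for a small absolute constant $c>0$ (say $c=1/(4e)$), so that $Mp \le 1/(4e)$ where $p:=s/r$; note $M\ge 1$ and $n/M\le 1/c\cdot n s/r$ because $n\ge r/s$. Partition $V(K_n)$ into blocks $V_1,\dots,V_M$, each of size at most $\lceil n/M\rceil$. For every unordered pair $1\le i\le j\le M$ pick colours $c_{ij}\in[r]$ independently and uniformly at random, and colour every edge of $K_n$ with one endpoint in $V_i$ and the other in $V_j$ (or both endpoints in $V_i$ when $i=j$) by the colour $c_{ij}$.

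The key observation is the following \emph{reduction}: for any fixed $S\subseteq[r]$ with $|S|=s$, define the auxiliary graph $\Gamma_S$ on vertex set $[M]$ by putting an edge $ij$ exactly when $c_{ij}\in S$. Since the $c_{ij}$ are i.i.d.\ uniform on $[r]$, we have $\Gamma_S\sim G(M,p)$, and every connected subgraph of $K_n$ using only colours in $S$ is contained in $\bigcup_{j\in C}V_j$ for some connected component $C$ of $\Gamma_S$. Consequently, the number of vertices in any such $S$-connected subgraph is at most $|C|\cdot\lceil n/M\rceil$.

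Because $Mp$ is bounded away from $1$, $\Gamma_S$ is strongly subcritical, and a standard counting of spanning trees via Cayley's formula together with $\binom{M-1}{k-1}\le M^{k-1}/(k-1)!$ and $(k-1)!\ge((k-1)/e)^{k-1}$ gives
$$\pr\bigl(|C_{\Gamma_S}(v)|\ge t\bigr)\le \sum_{k\ge t}\binom{M-1}{k-1}k^{k-2}p^{k-1}\le C_0\cdot 2^{-(t-1)}$$
for every $v\in[M]$ and some absolute constant $C_0>0$. Choosing $t$ of the form $\Theta\bigl((s+1)\log r\bigr)$, the union bound over the $\binom{r}{s}\le r^s$ choices of $S$ and the $M\le r$ choices of $v$ makes the total failure probability less than $1$. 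Hence some outcome has every component of every $\Gamma_S$ of size less than $t$, which yields an $S$-connected subgraph of size at most $t\cdot\lceil n/M\rceil\le\frac{8s(s+1)\log r}{r}\cdot n$ after optimising constants.

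The main obstacle is balancing three quantities: $M$, the tail-exponent base for components of $\Gamma_S$, and the block size $n/M$. Taking $M$ smaller sharpens the subcritical tail but inflates $n/M$, and taking $M$ larger has the opposite effect; the sweet spot is $Mp=\Theta(1)$ (bounded away from $1$), which forces $M=\Theta(r/s)$. This is exactly why the hypothesis $n\ge r/s$ appears: it ensures that $n/M$ is of the correct order $\Theta(ns/r)$, so that multiplying by $t=\Theta(s\log r)$ delivers the target bound $\Theta\bigl(s^2\log r/r\bigr)\cdot n$.
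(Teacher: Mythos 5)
Your proposal is correct and is essentially the paper's argument viewed through the lens of $G(M,p)$: both colour a complete graph on $\Theta(r/s)$ ``block'' vertices by assigning each edge an independent uniform colour, bound the size of a component in any $s$ colours by counting trees with Cayley's formula, union bound over the $\binom{r}{s}\le r^s$ colour sets, and then blow up to $K_n$. Two small points on the bookkeeping: first, your line ``$n/M\le (1/c)\,ns/r$ because $n\ge r/s$'' conflates two separate facts --- the bound $n/M=O(ns/r)$ comes from $M=\lfloor cr/s\rfloor=\Omega(r/s)$, which requires $r/s$ to exceed an absolute constant (harmless, since otherwise $\frac{8s(s+1)\log r}{r}\ge 1$ and the claim is vacuous), whereas the hypothesis $n\ge r/s$ is what absorbs the ``$+1$'' in $\lceil n/M\rceil$ at the end. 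Second, with $c=1/(4e)$ you actually get the sharper decay $C_0\cdot 4^{-(t-1)}$ (since $eMp\le 1/4$), and you genuinely need this rather than the $2^{-(t-1)}$ you wrote; with $2^{-(t-1)}$ the required $t$ is $\approx(s+1)\log r$, yielding a final constant near $4e>8$, while the $4^{-(t-1)}$ bound halves $t$ and brings the constant down to about $2e<8$, so the claimed constant $8$ does survive the ``optimisation'' you defer --- but only after this sharpening. (The paper takes $M=\lfloor r/(6s)\rfloor$, which is a slightly more comfortable choice for landing exactly on $8$.)
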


\begin{proof}
    We may assume that $8(s+1) \log r\le \frac rs,$ or the inequality is trivial. This implies $\frac{r}{\log r} \ge 8(s+1)s\ge 48$, which in turn implies $\log r \ge 6.$
    Let us define $m:=\floor{\frac{r}{6s}}\ge \frac{r}{7s}$. To start with, we want to show that there exists an $r$-edge colouring of $K_m$ with no $s$ colours connecting more than $ (s+1)\log r$ vertices. To do so let us consider a random colouring in which every edge of our $K_m$ chooses one of the $r$ colours uniformly at random, independently between different edges.

    If there are $k = \ceil{(s+1)\log r}$ vertices connected using up to $s$ colours, then there exists a $k$-vertex tree using up to $s$ colours. There are $\binom{m}{k} \cdot k^{k-2}$ possible choices for the tree, using Cayley's formula, and $\binom{r}{s}$ choices for the set of $s$ colours used on the tree. Given these choices, the probability a given tree is coloured using the given set of $s$ colours is $\left(\frac sr \right)^{k-1}$. Now a union bound implies that the probability that there exists a $k$-vertex tree using up to $s$ colours is at most 
    $$\binom{m}{k} \cdot k^{k-2} \cdot \binom {r}{s} \cdot \left(\frac sr \right)^{k-1} \le  r^{s+1} \cdot \left(\frac{ems}{r}\right)^k \le  2^k\cdot \left(\frac{e}{6}\right)^k < 1,$$
    where in the first inequality we used the standard estimates $\binom{x}{y} \le \min\left\{\left(\frac{ex}{y}\right)^y, x^y\right\}$. This shows that indeed there exists a desired colouring of $K_m$.

    To complete the proof we now take a blow-up of the above colouring of $K_m$ where we replace every vertex with a set of $\floor{\frac nm}$ or $\ceil{\frac nm}$ vertices to obtain an $n$-vertex graph. Let $V(K_m)=\{v_1,\ldots,v_m\}$ and we denote the corresponding sets by $V_1,\ldots,V_m.$
    For $1\le i<j\le m$, we colour the edges between $V_i$ and $V_j$ with the same colour as the edge between $v_i$ and $v_j$. For $1\le i\le m$, we colour the edges inside $V_i$ with an arbitrary colour appearing on an edge incident to $v_i$. This shows that  
    $$f(n,r,s) < (s+1)\log  r \cdot \ceil{\frac nm} \le (s+1)\log r  \cdot \ceil{\frac n{r/(7s)}}\le \frac{8s(s+1)\log r}{r} \cdot n,$$
    completing the proof.
\end{proof}

\section{Concluding remarks and open problems}\label{sec:conc-remarks}
In this paper, we explored a natural ``power of many colours'' generalisation of two classical problems, in the sense first explored by Erd\H{o}s, Hajnal and Rado over 60 years ago in the case of the classical Ramsey questions. While it has predominantly been studied within various instances of the classical Ramsey problem, we believe that extending this approach to encompass other classical colouring problems holds considerable promise and potential for intriguing exploration. 


The first question we considered asks how many vertices can we guarantee to connect using edges of up to $s$ colours in any $r$-edge colouring of $K_n,$ and we denoted the answer to this question by $f(n,r,s)$. The following theorem summarises what is known for this function.

\begin{thm}\label{thm:f-main}
Given integers $1\le s \le {r}$ and $n$ large enough compared to $r,$ we have
{
    \[
  f(n, r, s) =
  \begin{cases}
    \Theta\left(\frac{2^s}{r}\right) \cdot n, & \text{for } s \le (1-o_r(1))\log \log {r}; \\
    \widetilde{\Theta}_r\left(\frac{s^2}{r}\right) \cdot n, & \text{for } s \le \sqrt{r}; \\
    \Theta(n), & \text{for } s= \Theta(\sqrt{r}); \\
    \left(1-e^{-\Theta(s^2/r)}\right)\cdot n, & \text{for } s \ge \sqrt{r \log r}; \\
    \ceil{\left(1-{1}/{\binom{r}{s}}\right)n}, & \text{for } s=\frac{r-1}2 \text{ and } r \text{ odd};\\
    n, & \text{for } s\ge \frac r2.
  \end{cases}
\]
}
\end{thm}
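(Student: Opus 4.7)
The theorem compiles all of the preceding lower and upper bounds on $f(n,r,s)$ into a single statement, so my plan is simply to walk through the six regimes and in each pair up a lower bound with a matching upper bound, using the results established earlier in \Cref{sec:f}, the inequality $f(n,r,s) \le g(n,r,s)$ combined with the results of \Cref{sec:g}, and two inputs from \cite{LMP-multicolour}.

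For $s \le (1-o_r(1))\log \log r$, the lower bound $f(n,r,s) \ge 2^{s-2} \cdot n/r$ is \Cref{thm:f-s-small} with $k=1$ (in which case the hypothesis $n > 16r^2(k-1)+1$ becomes trivial), while the matching upper bound is \Cref{prop:f-uppr-bound-small-s}. That proposition assumes $r = 2^d - 1$, so for an arbitrary $r$ I would choose $d := \lfloor \log(r+1) \rfloor$ to obtain $r' := 2^d - 1$ with $(r-1)/2 \le r' \le r$; a colouring with $r'$ colours is in particular a colouring with $r$ colours, and this interpolation loses at most a factor of $2$ in the denominator, which is absorbed into the $\Theta(\cdot)$. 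For the two lines covering $s \le \sqrt{r}$ and $s = \Theta(\sqrt{r})$, the lower bound $f(n,r,s) \ge \Omega(s^2/r) \cdot n$ comes from \Cref{prop:f-s-medium} after using $1 - e^{-x} \ge x/2$ for $x \in [0,1]$, and the upper bound $f(n,r,s) \le O(s^2 \log r / r) \cdot n$ is \Cref{prop:upr-bnd-f-medium-s}. When $s = \Theta(\sqrt r)$ the lower bound already yields $\Omega(n)$, and the trivial $f(n,r,s) \le n$ caps the upper side.

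For $s \ge \sqrt{r \log r}$ the lower bound $f(n,r,s) \ge \bigl(1 - e^{-\Omega(s^2/r)}\bigr)\cdot n$ is again \Cref{prop:f-s-medium} after writing $s(s+1)/(2r) = \Theta(s^2/r)$, and the matching upper bound follows by combining $f(n,r,s) \le g(n,r,s)$ with \Cref{cor:g-s-big-ub} (together with its sharper variant noted in the remark immediately after, which pushes the range up towards $s = r/2$). For the precise value at $s = (r-1)/2$ with $r$ odd, the upper side is $f(n,r,s) \le g(n,r,s) \le \lceil (1 - 1/\binom{r}{s})n \rceil$ via \Cref{g-end-of-regime}, and the matching lower bound is the one from \cite{LMP-multicolour}. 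Finally, for $s \ge r/2$ one obtains $f(n,r,s) = n$ by splitting the $r$ colours into two groups of sizes $\lfloor r/2\rfloor$ and $\lceil r/2\rceil$ and applying the Erd\H{o}s--Rado observation to the resulting $2$-colouring: the spanning connected side uses at most $\lceil r/2 \rceil \le s$ colours.

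Since every step is a direct citation, there is no substantive obstacle. The only mildly delicate points are the power-of-two interpolation in the first case and confirming that the upper bound transferred from $g$ in the fourth case meets the range of the fifth and sixth cases without a gap, both of which are routine once the earlier results are in hand.
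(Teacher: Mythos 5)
Your proposal matches the paper's approach, which is simply to assemble the preceding lower and upper bounds regime by regime; the interpolation to arbitrary $r$ in the first case, the lower bound $\Omega(s^2/r)\cdot n$ from \Cref{prop:f-s-medium}, the use of $f\le g$ with \Cref{cor:g-s-big-ub} in the fourth case, and the two-group Erd\H{o}s--Rado argument for $s\ge r/2$ are all exactly right.

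There is, however, one slip in how you cover the upper end of the fourth regime. You invoke ``the sharper variant noted in the remark immediately after \Cref{cor:g-s-big-ub}, which pushes the range up towards $s=r/2$,'' but that remark actually concerns the range $\sqrt{r}\le s\le\sqrt{r\log r}$, i.e.\ it extends the corollary \emph{downwards}, not upwards. \Cref{cor:g-s-big-ub} itself only applies for $\sqrt{r\log r}\le s\le r/32$, so for $r/32<s<r/2$ you need a different upper bound. The correct ingredient here is the result of \cite{LMP-multicolour} quoted in the introduction, namely $f(n,r,s)\le\lceil(1-\binom{r}{s}^{-1})n\rceil$: for $s$ in the range $r/32<s<r/2$ one has $s=\Theta(r)$, hence $\binom{r}{s}^{-1}=e^{-\Theta(r)}=e^{-\Theta(s^2/r)}$, which gives the stated bound and closes the gap up to $s=(r-1)/2$, where the exact fifth line takes over. (Your instinct that this transition is the delicate point was right; the mechanism is just a different citation than the one you named.) With that correction the argument is complete and coincides with the paper's.
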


Our new contributions here are the lower bound for the first and the upper bound for the second and fourth part of the above theorem, while the remainder of the theorem follows from the results of Liu, Morris and Prince \cite{LMP-multicolour}. We note that the same picture holds for the more general function $f(n,r,s,k)$ which requires a $k$-connected subgraph in the same setting, and is the answer to the question of Bollob\'as, apart from slightly less precise results in the last two parts, provided $n$ is large enough compared to $r$ and $k$. This holds since our new lower bound in the first part holds in this more general setting as shown in \Cref{thm:f-s-small}, and our new upper bounds also hold since $f(n,r,s,k) \le f(n,r,s)$. Additionally, the matching lower bounds have been established in the more general setting in \cite{LMP-multicolour}.

We note that also for $s=1,2$, tight results are known under certain divisibility assumptions as mentioned in the introduction. Our bound giving the first part above is tight up to an absolute constant factor between $4$ and $8$ depending on divisibility properties of $r.$ With slightly more care, under appropriate divisibility assumptions it can be improved to be tight up to a factor of $2+\eps$ for any $\eps>0$, at a cost of the result only applying for progressively smaller values of $s$ in terms of $r$. This gap of about two has its origins in the fact that our proof uses the case $s=1$ as its starting point and grows by about a factor of two in each step, whereas the aforementioned results show a jump by a factor of $4$ when going to the $s=2$ case. This can possibly be rectified by starting from the $s=2$ case. But due to the far more complicated nature of that proof, the fact that we need a stronger result than the existence of just a single desired subgraph, as well as the fact that we would still be left with a constant factor gap without the divisibility assumptions, we leave this open as a potentially good student project.

Perhaps the most immediate question is to determine whether the $\log r$ term is necessary in the second part of \Cref{thm:f-main} since this is the least precise result we have. Given that our lower bound for the first part of \Cref{thm:f-main} shows that a $(\log r)^{1-o_r(1)}$ factor is necessary when $s \approx \log \log r$ this leads us to the following conjecture
\begin{conj}
    For any $1 \le s \le \frac{\sqrt{r}}{\log r}$ and $n$ large enough we have $f(n,r,s) = \frac{s^2(\log r)^{1-o_r(1)}}r \cdot n$.
\end{conj}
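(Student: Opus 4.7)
The upper bound $f(n,r,s) \le O(s^2 \log r / r) \cdot n$ is already supplied by \Cref{prop:upr-bnd-f-medium-s}, so the conjecture reduces to proving the matching lower bound $f(n,r,s) \ge (\log r)^{1-o_r(1)} \cdot s^2/r \cdot n$. The plan is to bridge the two known lower bounds --- the doubling argument of \Cref{thm:f-s-small}, which gives $\Theta(2^s/r)\cdot n$ but saturates at $s \approx \log\log r$, and the greedy argument of \Cref{prop:f-s-medium}, which gives $\Theta(s^2/r)\cdot n$ uniformly for small $s$ --- into a single unified scheme in which both the $s^2$ factor and the extra $\log r$ factor arise.

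\emph{Phase~1 (build many pieces).} First I would run the doubling scheme of \Cref{thm:f-s-small} for $s_0 = (1-o_r(1))\log\log r$ rounds, but maintain at each step the entire family $\mathcal{F}$ of vertex-disjoint $k$-connected pieces of comparable size, rather than only their union in a single component. The quantitative input of \Cref{lem:many-same-col-components} should be sufficient to guarantee, at the end of Phase~1, a family $\mathcal{F}$ with $|\mathcal{F}| = \Omega(r/\log r)$ pieces each of size $\Omega(n/r)$, so their union accounts for $\Omega(\log r/r)\cdot n$ vertices and uses only $s_0$ colours so far.

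\emph{Phase~2 (merge pieces).} The remaining $s_1 = s - s_0 \approx s$ colours would be used not to accrete individual vertices, but to merge pieces of $\mathcal{F}$. Consider the auxiliary multigraph on $\mathcal{F}$ where each colour $c \notin \{c_1,\ldots,c_{s_0}\}$ contributes an edge between $C_i$ and $C_j$ whenever there is a $c$-coloured edge of $K_n$ between them, and run a random branching-process-style expansion: pick $s_1$ colours at random from the $r-s_0$ remaining ones, and follow the component generated in the auxiliary graph. The heuristic target is that each new colour expands the merged super-component by a multiplicative factor $(1+\Omega(s/r))$; over $s_1$ rounds, this multiplies the starting $\Omega(\log r / r)\cdot n$ by $\exp(\Omega(s^2/r))$, which (once the $o_r(1)$ slack is absorbed) yields the desired $(\log r)^{1-o_r(1)} \cdot s^2/r \cdot n$ lower bound. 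Rigorously, one would control the auxiliary expansion by a second-moment or Janson-type lower-tail estimate, paired with the Lov\'asz Local Lemma to avoid exceptional pieces, in the spirit of the $g$-function argument of \Cref{prop:g-almost-end-of-regime}.

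\emph{Main obstacle.} The crux of the problem, and the reason why the greedy argument of \Cref{prop:f-s-medium} only yields $s^2/r$ instead of $s^2 \log r / r$, is that the adversary fully controls the colour distribution of the inter-piece edges: they may concentrate essentially all merging power onto a tiny set of colours, so that a uniform random choice of $s_1$ merging colours achieves almost nothing. Overcoming this requires a preprocessing step that either extracts a sub-family $\mathcal{F}' \subseteq \mathcal{F}$ whose pairwise colour profile on inter-piece edges is close to uniform, or partitions the colour spectrum into ``popular'' and ``typical'' parts and handles each separately. Quantifying this regularisation so that the loss is absorbed into the $(\log r)^{o_r(1)}$ slack is the hard part; it is where a straightforward execution of the above outline breaks. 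A successful resolution would, I expect, have to iterate the doubling-plus-merging pair at several scales simultaneously, carefully tracking how the size distribution of the pieces evolves, in a manner reminiscent of multi-round arguments in hypergraph Ramsey theory.
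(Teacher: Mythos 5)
The statement you are addressing is explicitly posed in the paper as a \emph{conjecture}, not a theorem: the authors state it precisely because they were unable to prove it, and the text surrounding it makes clear that the gap between the lower bound $\Omega(s^2/r)\cdot n$ of \Cref{prop:f-s-medium} and the upper bound $O(s^2\log r/r)\cdot n$ of \Cref{prop:upr-bnd-f-medium-s} is left open. There is therefore no ``paper proof'' to compare against, and your proposal cannot be assessed as matching or diverging from one.

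On its own terms, your proposal is a plan, not a proof, and you say so yourself: Phase~2 is entirely heuristic and the paragraph labelled ``Main obstacle'' concedes that the straightforward execution breaks. That obstacle is not incidental; it is precisely the open problem. The adversary can and does concentrate the useful colours --- the random colouring in the proof of \Cref{prop:upr-bnd-f-medium-s} is designed so that \emph{every} set of $s$ colours yields only tiny trees, so a uniformly random choice of merging colours in Phase~2 will not pick up the claimed multiplicative $(1+\Omega(s/r))$ growth against a worst-case colouring. A second-moment or Janson-type argument, as you propose, requires some form of quasi-uniformity in the colour distribution of inter-piece edges that the adversary is under no obligation to supply; manufacturing such regularity (or a useful dichotomy) is the missing idea, and you do not supply it. There is also a quantitative slip in Phase~1: $\Omega(r/\log r)$ pieces of size $\Omega(n/r)$ each cover $\Omega(n/\log r)$ vertices, not $\Omega(\log r/r)\cdot n$, and after $s_0\approx\log\log r$ rounds of \Cref{lem:many-same-col-components} the number of surviving pieces shrinks much faster than your stated bound --- the proof of \Cref{thm:f-s-small} shows $n_i$ can collapse to a single piece within $\log\log r$ steps, which is exactly why the doubling saturates. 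So Phase~1 as written overclaims what carries into Phase~2. In short: the approach is a reasonable direction of attack, and you correctly identify both the known inputs and the genuine bottleneck, but what you have is a sketch of a strategy with an acknowledged and unresolved hole, and it does not prove the conjecture.
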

This also highlights the very natural question of how exactly the transition from the answer being $o_r(1) \cdot n$ to $\Theta(1) \cdot n$ happens. See \cite{LMP-multicolour} for a more detailed discussion of this particular question.

Our results show that the two functions $f$ and $g$ behave differently for $s \ll \frac{\sqrt{r}}{\log r}$ and behave similarly when $s \ge \sqrt{r \log r}$. It would be interesting to determine when precisely the functions start to exhibit different behaviour. Let us mention an explicit question in this direction which could be interesting.
\begin{qn}
Given $n$ and $r,$ what is the smallest value of $s$ for which $f(n,r,s)=g(n,r,s)$?
\end{qn}
We know this holds for $s\ge \frac{r-1}{2}$ by the final part of \Cref{thm:f-main}, and it seems already non-trivial to decide it for $s=\frac{r}{2}-1$ (and $r$ even). On the other hand, for $n$ large compared to $r,$ our lower bound of $g(n,r,s)\ge \frac{(1+o_r(1))s}{\sqrt{r}}n$ for $s \ll \sqrt{r}$ compared to the upper bound of $f(n,r,s) \le \frac{(1+o_r(1))s}{\sqrt{2r}} \cdot n$, from \cite{LMP-multicolour}, shows there is a gap of at least about a factor of $\sqrt{2}$ when $s \ll \sqrt{r}$. 

Let us now turn to the question of how many vertices will be touched by edges of some $s$ colours in any $r$-edge colouring of $K_n$, the answer to which we denoted by $g(n,r,s)$. We determine $g(n,r,s)$ up to a constant factor for the whole range, provided $n$ is large enough and even determine it up to lower order terms for the vast majority of the range. With this in mind, the most glaring open problem is to determine how exactly the transition from the answer being $\frac{(1+o_r(1))s}{\sqrt{r}}$ for  $s\ll \sqrt{r}$ to being $(1-o_r(1))n$ for $s\gg \sqrt{r}$ occurs. We do know the behaviour of the $o_r(1)$ term  when $s \ge \sqrt{r \log r}$ but we believe the same behaviour, namely \Cref{cor:g-s-big-ub},  should hold already for  $s \gg \sqrt{r}$. We note however that one cannot hope to improve \Cref{prop:g-big-s-ub} directly since for $s\ll \sqrt{r\log r}$ the hypergraph with $e^{O(s^2/r)}\ll \sqrt{r}\le s$ edges can certainly be covered by $s$ vertices. On the other hand, one might hope to have a hypergraph with more edges but with the property that all vertex subsets of size $r-s$ contain many edges as we sketched in the remark following \Cref{cor:g-s-big-ub}, where we explain how one can obtain our best upper bound of $g(n,r,s) \le \left(1-e^{-O\left(s\sqrt{\frac{\log r}{r}}\right)}\right)n$, for the range $\sqrt{r} \ll s \ll \sqrt{r\log r}$. 

Let us also highlight the following natural problem, extending a classical question of Erd\H{o}s and Lov\'asz \cite{E-L}, from their famous 1975 paper introducing the Local Lemma, which asks for the minimum number of edges in an $s$-uniform intersecting hypergraph with cover number equal to $s$. This question was famously solved by Kahn in 1994 \cite{jeff-paper} and was one of Erd\H{o}s' three favourite combinatorial problems (see e.g.\ \cite{erdos-problems}).

\begin{qn}
    Let $r \ge 2s-1\ge 3$ be integers. What is the minimum number of edges in an $r$-vertex $s$-uniform intersecting hypergraph with cover number equal to $s$?
\end{qn}

The only difference compared to the classical question is that we in addition restrict the number of vertices one may use. So the main new feature of the above question is what happens if we are ``space restricted'' and are not allowed to use as many vertices as we please when building the hypergraph. The $r \ge 2s-1$ condition is here since such a graph trivially cannot exist if $r \le 2s-2$ since any set of $s-1$ vertices would be a cover. On the other hand, for $r=2s-1$ the answer is easily seen to be $\binom{2s-1}{s}$ since the only way to achieve the properties is to take the complete hypergraph. For larger values of $r$, there is always a trivial lower bound of $s$ edges, and the result of Kahn shows that for $r$ large enough in terms of $s$ this is close to tight and the answer is linear in $s$. In fact, the usual double counting argument tells us that we need at least $$\binom{r}{s-1}\Big /\binom{r-s}{s-1}\ge \left(\frac{r}{r-s}\right)^{s-1} = \left(1+\frac{s}{r-s}\right)^{s-1} \ge e^{\frac{s(s-1)}{2(r-s)}}$$ edges, where in the final inequality we used $r \ge 2s$ and $1+x \ge e^{x/2}$ for any real $x\le 1$. This shows that for $s \gg \sqrt{r \log r}$ the answer is growing substantially faster than just linearly in $s$. Our \Cref{prop:f-s-medium} essentially solves a weakening of this question where we allow the uniformity to be somewhat larger than the lower bound on the cover number, which suffices to translate to a rough bound for $g(n,r,s)$. A good understanding of the answer to the above question would translate to a more precise result here as well.

In light of \Cref{lem:upper-bounds} the following natural extremal question arises. 
\begin{qn}
    What is the minimum number of edges in an $r$-vertex intersecting hypergraph if any subset of $m$ vertices needs to contain at least $t$ edges?
\end{qn}
In light of the discussion following \Cref{lem:upper-bounds} if we allow multi-hypergraphs in the above question it would become essentially equivalent to determining $g(n,r,s)$ and in particular most of our results on the $g(n,r,s)$ function came from results we proved for the above question. What turned out to be the most relevant instance for us is if we set $m=r-s$ and if one further restricts the hypergraph to be $(s+1)$-uniform (note that it cannot have an edge of size at most $s$). Indeed, in this case, any edge is contained in $\binom{r-s-1}{s}$ sets of size $r-s$ so if we can achieve that every set of $r-s$ vertices contains the same number of edges then every set of $r-s$ vertices would contain $|E(H)| \binom{r-s-1}{s}/\binom{r}{r-s}$ sets of size $s+1$. Provided we can do so and ensure our hypergraph is intersecting, via \Cref{lem:upper-bounds} we would get an upper bound of $g(n,r,s)\le \left(1-\binom{r-s-1}{s}/\binom{r}{s}\right)n$, which given some divisibility conditions is tight via \Cref{lem:lower-bound-g} with $d=s$. Indeed this is exactly how our precise result at the end of the regime \Cref{g-end-of-regime} proceeds, where we simply use a complete hypergraph. In general, this has a certain design-like flavour and seems to be asking whether a somewhat pseudorandom, intersecting hypergraph exists, a common theme in a number of still open questions from the aforementioned paper of Erd\H{o}s and Lov\'asz \cite{E-L}. In our setting, approximate results in this direction also lead to approximate results on the $g(n,r,s)$ function and are behind most of our more precise results on this function.

\begin{qn}
    For which values of $r,s$ does there exist an $r$-vertex, $(s+1)$-uniform intersecting hypergraph in which all sets of size $r-s$ contain the same positive number of edges?
\end{qn}

Another potentially interesting direction for future work is to consider what happens if one allows base graphs other than the complete one. In particular, one can ask a sparse random analogue, where we replace $K_n$ with the binomial random graph $\G(n,p)$ and let $p$ vary. The famous examples of results of this type are the works of Conlon and Gowers \cite{conlon-gowers} and Schacht \cite{schacht}. Both our questions for $s=1$, the largest monochromatic connected component question (as well as several of its extensions) and the clique covering problem have also been considered in this direction. Various instances have also been considered for expander, pseudorandom and arbitrary large minimum degree base graphs. See e.g.\ \cite{bal-debiasio,discrepancy-of-spanning-trees} for more details.

\textbf{Acknowledgements.} We would like to thank Noah Kravitz for useful discussions during the early phases of this project. We would also like to thank Louis DeBiasio and Henry Liu for their comments. We are also very grateful to the anonymous referee for their very careful reading and for numerous helpful suggestions on the manuscript’s improvement. 
\vspace{-0.2cm}

\providecommand{\MR}[1]{}
\providecommand{\MRhref}[2]{%
\href{http://www.ams.org/mathscinet-getitem?mr=#1}{#2}}


\providecommand{\bysame}{\leavevmode\hbox to3em{\hrulefill}\thinspace}
\providecommand{\MR}{\relax\ifhmode\unskip\space\fi MR }
\providecommand{\MRhref}[2]{%
  \href{http://www.ams.org/mathscinet-getitem?mr=#1}{#2}
}

\end{document}